\newtheorem{theorem}{Theorem}[section]
\newtheorem{lemma}[theorem]{Lemma}
\def\bull{\vrule height .9ex width .8ex depth -.1ex}
\newenvironment{proof}{\smallbreak \noindent {\bf Proof.~}}
              {\unskip\nobreak\hfill\hskip 2em \bull\par\medbreak}
\newenvironment{proofof}[1]{\medbreak\noindent{\bf Proof of~#1.~}}
              {\unskip\nobreak\hfill\hskip 2em \bull\par\medbreak}
\def\bN{\mathbb{N}}
\def\bZ{\mathbb{Z}}
\def\cA{\mathcal{A}}
\def\de{\delta}
\def\om{\omega}
\def\si{\sigma}
\def\Om{\Omega}
\title{On the topological full group\\ containing the Grigorchuk group}
\author{Yaroslav Vorobets}
\date{}
\begin{document}

\maketitle

\begin{abstract}
We consider the topological full group of a substitution subshift induced 
by a substitution $a\to aca$, $b\to d$, $c\to b$, $d\to c$.  This group is 
interesting since the Grigorchuk group naturally embeds into it.  We show 
that the topological full group is finitely generated and give a simple 
generating set for it.
\end{abstract}

\section{Introduction}\label{intro}

Let $X$ be a Cantor set and $T:X\to X$ be a minimal homeomorphism.  The 
\emph{topological full group} of $T$, denoted $[[T]]$, is a transformation
group consisting of all homeomorphisms $f:X\to X$ that can be given by
$f(x)=T^{\nu(x)}(x)$, $x\in X$ for some continuous function $\nu:X\to\bZ$.
Continuity of $\nu$ implies that this function is locally constant and
takes only finitely many values.  Then nonempty level sets of $\nu$ form a
partition of the Cantor set $X$ into clopen (i.e., both closed and open)
sets.  Thus every element of $[[T]]$ is ``piecewise'' a power of $T$.  The 
topological full group $[[X]]$ is countable (as there are only countably 
many clopen subsets of $X$).

The notion of the topological full group was introduced by Giordano, Putnam 
and Skau \cite{GPS} who showed that $[[T]]$ is an (almost) complete 
invariant of $T$ as a topological dynamical system.

\begin{theorem}[{\cite{GPS}}]\label{full0}
Given minimal homeomorphisms $T_1:X\to X$ and $T_2:X\to X$ of a Cantor set 
$X$, the topological full groups $[[T_1]]$ and $[[T_2]]$ are isomorphic if 
and only if $T_1$ is topologically conjugate to $T_2$ or $T_2^{-1}$.
\end{theorem}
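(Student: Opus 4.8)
The plan is to handle the ``if'' direction by a direct conjugation argument, and the ``only if'' direction in two stages: first show that any abstract isomorphism $[[T_1]]\cong[[T_2]]$ is \emph{spatial} (implemented by a homeomorphism of $X$), then show that two minimal homeomorphisms of $X$ whose topological full groups coincide as subgroups of $\mathrm{Homeo}(X)$ can differ only by a flip. For ``if'', first note $[[S]]=[[S^{-1}]]$ for every homeomorphism $S$ of $X$, since the piecewise power $S^{\nu(\cdot)}$ is the same map as $(S^{-1})^{-\nu(\cdot)}$ and $-\nu$ is locally constant exactly when $\nu$ is. Next, for a homeomorphism $h:X\to X$ the inner automorphism $f\mapsto hfh^{-1}$ of $\mathrm{Homeo}(X)$ sends $T^{\nu(\cdot)}$ to $(hTh^{-1})^{(\nu\circ h^{-1})(\cdot)}$, and so carries $[[T]]$ isomorphically onto $[[hTh^{-1}]]$. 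Combining, if $T_1$ is conjugate to $T_2$ or to $T_2^{-1}$ then $[[T_1]]\cong[[T_2]]$.

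For ``only if'', let $\Phi:[[T_1]]\to[[T_2]]$ be an isomorphism; the heart of the matter is to produce a homeomorphism $h:X\to X$ with $\Phi(g)=hgh^{-1}$ for all $g$. I would obtain this from a reconstruction theorem of Rubin type: a group acting faithfully on a Cantor set with sufficiently rich rigid stabilizers is determined by the space, so that an isomorphism between two such groups is automatically induced by a homeomorphism. The hypotheses to check for the tautological action of $[[T]]$ on $X$ are faithfulness --- immediate, since a non-identity element of $[[T]]$ displaces some point --- and richness: for every nonempty clopen $U$ there must be enough elements of $[[T]]$ supported in $U$ to separate the points of $U$ and permute its clopen subsets. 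This is exactly where minimality of $T$ enters: if $x,y\in U$ lie in a common $T$-orbit, say $y=T^n x$, then for a small enough clopen $W\ni x$ the sets $W$ and $T^nW$ are disjoint subsets of $U$, and the map equal to $T^n$ on $W$, to $T^{-n}$ on $T^nW$, and to the identity elsewhere is an element of $[[T]]$ supported in $U$ sending $x$ to $y$; Kakutani--Rokhlin towers inside $U$ likewise furnish all level permutations. With faithfulness and richness in hand, the reconstruction theorem yields $h$. I expect this spatiality step to be the main obstacle: pinning down the precise reconstruction principle that applies here and verifying its hypotheses for $[[T]]$ (equivalently, recognizing that this amounts to reconstructing the underlying \'etale groupoid of $T$ from its topological full group).

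Finally, set $S:=hT_1h^{-1}$, a minimal homeomorphism of $X$; since $\Phi$ is conjugation by $h$ and is onto, $[[S]]=h[[T_1]]h^{-1}=\Phi([[T_1]])=[[T_2]]$ as subgroups of $\mathrm{Homeo}(X)$. It remains to show that minimal homeomorphisms $S,T$ of a Cantor set with $[[S]]=[[T]]$ satisfy $S=T$ or $S=T^{-1}$; this gives $hT_1h^{-1}=T_2^{\pm1}$ and completes the proof. As $S\in[[T]]$ and $T\in[[S]]$, write $S=T^{\nu(\cdot)}$ and $T=S^{\mu(\cdot)}$ with $\nu,\mu\in C(X,\bZ)$, both nowhere zero since minimal Cantor systems have no periodic points. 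Telescoping the second relation gives $T(x)=T^{\sigma(x)}(x)$ with $\sigma(x)=\sum_{i=0}^{\mu(x)-1}\nu(S^ix)$ (read with the evident sign convention when $\mu(x)\le0$), whence $\sigma\equiv1$ because orbits are infinite. The one delicate point is that $\nu$ has constant sign; granting $\nu\ge1$ everywhere, the identity $\sum_{i=0}^{\mu(x)-1}\nu(S^ix)=1$ with each summand $\ge1$ forces $\mu(x)=1$, hence $\nu\equiv1$ and $S=T$, while the case $\nu\le-1$ reduces to this one upon replacing $S$ by $S^{-1}$, giving $S=T^{-1}$. The constant-sign claim is a rigidity statement for $\bZ$-valued cocycles over a minimal system, proved by playing the two inclusions against minimality --- for instance through the index homomorphism $\mathrm{ind}_T:[[T]]\to C(X,\bZ)/\{g-g\circ T:g\in C(X,\bZ)\}$, $T^{\nu_f(\cdot)}\mapsto[\nu_f]$, in which $\mathrm{ind}_T(T)=[1]$ and $n\mapsto n[1]$ is injective (a nonzero constant coboundary $g-g\circ T$ would be unbounded along an orbit), comparing $\mathrm{ind}_T$ with $\mathrm{ind}_S$ under $[[S]]=[[T]]$ to force the class of $\nu$ to equal $\pm[1]$.
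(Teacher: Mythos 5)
This theorem is quoted in the paper as background from \cite{GPS} with no proof given, so there is nothing internal to compare against; I am assessing your sketch on its own terms. Your architecture (spatiality of the isomorphism via a Rubin-type reconstruction theorem, followed by a rigidity argument identifying the two transformations) is the standard modern route, and the ``if'' direction is fine.

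The final step, however, contains a genuine error. The lemma you reduce to --- ``minimal homeomorphisms $S,T$ of a Cantor set with $[[S]]=[[T]]$ satisfy $S=T$ or $S=T^{-1}$'' --- is false. For any $g\in[[T]]$ the homeomorphism $S=gTg^{-1}$ is minimal and satisfies $[[S]]=g[[T]]g^{-1}=[[T]]$, yet $S\ne T^{\pm1}$ in general: taking $g=\de_U$ with $U$, $T(U)$ disjoint, one checks $gT(x)=x\ne T^2(x)=Tg(x)$ for $x\in U$, so $gTg^{-1}\neq T$, and evaluating far from $U$ rules out $T^{-1}$. Your own justification shows where the argument breaks: the index map only controls the cohomology class, giving $[\nu]=\pm[1]$ in $C(X,\bZ)/\{g-g\circ T\}$, and a function cohomologous to $1$ need not be $\ge1$ pointwise --- in the example above $[\nu]=[1]$ but $\nu\not\equiv1$. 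So the constant-sign claim cannot be extracted this way and the telescoping argument collapses. What is actually needed here is Boyle's flip-conjugacy theorem: if $S=T^{\nu(\cdot)}$ with $\nu$ continuous and $S$, $T$ are minimal with $[[S]]=[[T]]$, then $S$ is topologically conjugate (by some further homeomorphism, not the identity) to $T$ or $T^{-1}$; its proof is a nontrivial cocycle-untwisting argument. Since conjugacy rather than equality of $hT_1h^{-1}$ with $T_2^{\pm1}$ still yields the theorem, your overall plan survives, but this lemma must be replaced. Separately, the spatiality step is only a pointer: you name a reconstruction principle and gesture at the hypotheses without pinning down the precise statement or verifying local density in detail, so as written that part is a placeholder rather than a proof.
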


In this paper we are concerned with group-theoretical properties of a 
topological full group $[[T]]$.

\begin{theorem}[{\cite{GPS}}]\label{full1}
There exists a unique homomorphism $I:[[T]]\to\bZ$ such that $I(T)=1$.
\end{theorem}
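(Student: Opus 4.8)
The plan is to build $I$ from the action of elements of $[[T]]$ on individual orbits of $T$. Since $X$ is an infinite compact space and $T$ is minimal, $T$ has no periodic point (a finite orbit would be a nonempty proper closed invariant set), so $\bZ$ acts freely on each orbit; fixing $x\in X$, the map $n\mapsto T^n(x)$ identifies the orbit $\mathcal O(x)$ with $\bZ$. An element $f\in[[T]]$ with $f(y)=T^{\nu(y)}(y)$ preserves every orbit and, under this identification, acts on $\mathcal O(x)$ as the bijection $\widehat f_x\colon\bZ\to\bZ$, $\widehat f_x(n)=n+\nu(T^n(x))$, whose displacement is bounded: $|\widehat f_x(n)-n|\le M:=\max_X|\nu|<\infty$. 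For an arbitrary bijection $\sigma$ of $\bZ$ with $\sup_n|\sigma(n)-n|<\infty$ I set
\[
\mathrm{ind}(\sigma)=\#\{n<0:\sigma(n)\ge 0\}-\#\{n\ge 0:\sigma(n)<0\}.
\]
The displacement bound makes all but finitely many terms vanish, so $\mathrm{ind}(\sigma)\in\bZ$; rewriting $\mathrm{ind}(\sigma)=\sum_{n\in\bZ}\bigl(\mathbf 1_{[0,\infty)}(\sigma(n))-\mathbf 1_{[0,\infty)}(n)\bigr)$, then grouping terms and reindexing, shows $\mathrm{ind}(\sigma\tau)=\mathrm{ind}(\sigma)+\mathrm{ind}(\tau)$, while directly $\mathrm{ind}=0$ for the identity and $\mathrm{ind}=1$ for the shift $n\mapsto n+1$.

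Next I would prove that $c_f(x):=\mathrm{ind}(\widehat f_x)$ does not depend on $x$. Only the values $\nu(T^n(x))$ with $-M\le n<M$ enter the formula for $\mathrm{ind}(\widehat f_x)$, and each is a locally constant function of $x$, so $c_f\colon X\to\bZ$ is locally constant, hence continuous. Moreover $\widehat f_{Tx}$ is conjugate to $\widehat f_x$ by the shift, so additivity of $\mathrm{ind}$ gives $c_f(Tx)=c_f(x)$; since a continuous $T$-invariant $\bZ$-valued function on a minimal system is constant (its fibres are clopen, invariant and nonempty), $c_f\equiv I(f)$ for a well-defined integer $I(f)$. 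The relation $(fg)|_{\mathcal O(x)}=f|_{\mathcal O(x)}\circ g|_{\mathcal O(x)}$ becomes $\widehat{fg}_x=\widehat f_x\circ\widehat g_x$, so additivity of $\mathrm{ind}$ yields $I(fg)=I(f)+I(g)$, and $\widehat T_x$ being the shift gives $I(T)=1$. This proves existence. (Equivalently one may set $I(f)=\int_X\nu\,d\mu$ for any $T$-invariant probability measure $\mu$; a short averaging argument identifies this integral with the orbit index above, which in particular shows it is an integer independent of $\mu$.)

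For uniqueness, let $\chi\colon[[T]]\to\bZ$ be a homomorphism with $\chi(T)=1$. Given $f$, put $m=I(f)$; then $fT^{-m}\in\ker I$ because $I(T)=1$, and $\chi(f)=m\chi(T)+\chi(fT^{-m})=m+\chi(fT^{-m})$ while $I(f)=m$, so it suffices to show $\chi$ vanishes on $\ker I$. As $\bZ$ is torsion-free, $\chi$ annihilates every element of finite order, so it is enough to know that $\ker I$ is generated by its elements of finite order; granting this, $\chi|_{\ker I}=0$ and hence $\chi=I$.

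The crux is exactly that last fact, and this is where the real work lies. One argues with Kakutani--Rokhlin partitions: given $h\in\ker I$, choose a clopen base $B$ so that the tower partition $X=\bigsqcup_{v\in V}\bigsqcup_{0\le j<n_v}T^j(B_v)$ is fine enough to be adapted to $h$, meaning every atom lies in a single level set of $\nu_h$ and $h$ carries atoms to atoms; then $h$ permutes the finitely many atoms. The subtlety is that although $h$ permutes the atoms, the powers of $T$ realizing $h$ along a given cycle of atoms need not sum to zero, so one cannot simply write $h$ as a product of clopen cyclic permutations $c_{A,k}$ (the element moving $A,TA,\dots,T^{k-1}A$ cyclically by $T$, of order $k$) cycle by cycle; one must combine cycles and re-choose the partition to absorb the discrepancies, which is possible precisely because the global index $I(h)$ vanishes. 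Carrying this out — choosing the adapted partition, and the cycle bookkeeping that makes only finite-order factors survive — is the main obstacle. Since Theorem~\ref{full1} is due to \cite{GPS}, one may of course simply cite that reference instead.
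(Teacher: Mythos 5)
The paper does not prove this theorem; it is quoted verbatim from \cite{GPS}, so there is no in-paper argument to compare against and your proposal has to stand on its own. The existence half does: the orbit index $\mathrm{ind}$ of a bounded-displacement bijection of $\bZ$ is well defined (finitely many nonzero terms), additive (your telescoping-and-reindexing argument is legitimate precisely because each of the two sums has finite support), freeness of the $\bZ$-action on orbits follows from minimality on an infinite space, the window $-M\le n<M$ correctly shows $x\mapsto\mathrm{ind}(\widehat f_x)$ is locally constant, and $T$-invariance plus minimality makes it constant. Together with $\widehat{fg}_x=\widehat f_x\circ\widehat g_x$ and $\mathrm{ind}(\mathrm{shift})=1$, this is a complete and correct construction of $I$ (essentially the one in \cite{GPS}).

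The uniqueness half has a genuine gap. You reduce it to the assertion that $\ker I$ is generated by its elements of finite order --- but that assertion is exactly Theorem \ref{full2} of the paper, a separate and substantially harder result due to Matui \cite{Mat}, not to \cite{GPS}. You identify the right strategy (Kakutani--Rokhlin towers adapted to $h$) and even the right obstruction (the exponents of $T$ along a cycle of atoms need not sum to zero, and one must use $I(h)=0$ to redistribute the discrepancies before factoring into cyclic permutations), but you do not carry out that bookkeeping, and it is the entire content of the step. As written, uniqueness is proved only modulo an unproved theorem at least as deep as the one under review. Two smaller remarks: you need less than the full strength of Theorem \ref{full2} --- since $\bZ$ is torsion-free, it suffices that $[[T]]$ be generated by $T$ together with torsion elements (Theorem \ref{full3}), though that is likewise a cited result whose proof in this paper's logic passes through Theorem \ref{full2}; and your closing suggestion to ``simply cite \cite{GPS}'' is of course what the paper itself does, but it means the uniqueness statement is not something your proposal establishes.
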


The homomorphism $I$ is called the \emph{index map}.  Clearly, every 
element of finite order is contained in the kernel of $I$.

\begin{theorem}[{\cite{Mat}}]\label{full2}
The kernel of the index map is generated by elements of finite order.
\end{theorem}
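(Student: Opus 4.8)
The plan is to exhibit an explicit generating set of $\ker I$ consisting of involutions. For a clopen set $A\subseteq X$ with $A\cap TA=\emptyset$, let $\tau_A\in[[T]]$ be the involution given by $\nu(x)=1$ for $x\in A$, $\nu(x)=-1$ for $x\in TA$, and $\nu(x)=0$ otherwise; it sends $A$ onto $TA$ by $T$, sends $TA$ onto $A$ by $T^{-1}$, and fixes the rest of $X$, so $\tau_A^2=\mathrm{id}$. Let $N\le[[T]]$ be the subgroup generated by all such $\tau_A$. I would prove that $\ker I=N$; since every $\tau_A$ has order $2$, this shows $\ker I$ is generated by elements of finite order. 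The argument has a soft group-theoretic part and one genuinely dynamical input, namely that $[[T]]$ is generated by $T$ together with all the $\tau_A$, i.e.\ $[[T]]=\langle T,N\rangle$.

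The soft part runs as follows. Since each generator $\tau_A$ of $N$ has finite order, and every finite-order element of $[[T]]$ lies in $\ker I$ (as noted just after Theorem~\ref{full1}), we have $N\subseteq\ker I$. A direct computation gives $T\tau_A T^{-1}=\tau_{TA}$, and more generally $T^k\tau_A T^{-k}=\tau_{T^kA}$; thus conjugation by $T$ merely permutes the generating family of $N$, so $T$ normalises $N$. Granting the dynamical input $[[T]]=\langle T,N\rangle$ and using that $T$ normalises $N$, we get $[[T]]=\langle T\rangle\,N$ and hence $N\trianglelefteq[[T]]$. Then $[[T]]/N$ is cyclic, generated by the image $\overline T$ of $T$. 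Because $N\subseteq\ker I$, the index map descends to a homomorphism $\overline I\colon[[T]]/N\to\bZ$ with $\overline I(\overline T)=1$; in particular $\overline T$ has infinite order, so $[[T]]/N\cong\bZ$ and $\overline I$ is an isomorphism. Consequently $\ker I$ is the kernel of the quotient map $[[T]]\to[[T]]/N$, which is $N$, and the proof is complete modulo the dynamical input. (Note that the hypothesis $I(g)=0$ enters only here, through the injectivity of $\overline I$; the dynamical input is a statement about all of $[[T]]$.)

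It remains to establish $[[T]]=\langle T,N\rangle$: every $g\in[[T]]$ is a product of a power of $T$ and finitely many of the involutions $\tau_A$. I would use Kakutani--Rokhlin partitions of $(X,T)$. Write $g|_{A_m}=T^m$ for the finite clopen partition $X=\bigsqcup_m A_m$ into level sets of $\nu_g$, and choose a Kakutani--Rokhlin partition $\{T^iY_j:1\le j\le J,\ 0\le i<h_j\}$, with base $\bigsqcup_j Y_j$, refining $\{A_m\}$ (refining further if necessary) and with all column heights $h_j$ large relative to $\max_m|m|$; then $g$ acts essentially combinatorially, carrying each atom onto a union of atoms by a bounded power of $T$. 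The key assertion is that one can correct $g$ — by left-multiplying with a suitable $\sigma\in\langle T,N\rangle$ — so that $g'=\sigma g$ maps each atom $T^iY_j$ onto an atom of the same column. Once this is done, $g'$ simply permutes, within each column $j$, the $h_j$ atoms $Y_j,TY_j,\dots,T^{h_j-1}Y_j$ by the appropriate powers of $T$, and any such permutation is a product of the adjacent transpositions $\tau_{T^iY_j}\in N$ ($0\le i<h_j-1$); hence $g'\in N$ and $g\in\langle T,N\rangle$.

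The construction of the correcting element $\sigma$ is the heart of the matter, and I expect it to be the main obstacle. It amounts to bookkeeping for the atoms that $g$ transports over the top of a column, and hence into the base of the partition and in general into a different column: this behaviour must be absorbed into an element of $\langle T,N\rangle$ — a power of $T$ together with transpositions supported near the bases and tops of the columns — and here one uses minimality of $T$ (to guarantee enough ``room'' in the tall columns) and the combinatorics of the return times. Everything else is routine; for instance, reducing a transposition that exchanges $A$ with $T^mA$ (where $A,TA,\dots,T^mA$ are pairwise disjoint) to the generators $\tau_A$ is a straightforward induction on $m$: conjugate the transposition exchanging $TA$ with $T^mA$ by $\tau_A$, using that $\tau_A$ fixes $T^mA$ and sends $TA$ to $A$.
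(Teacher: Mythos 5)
First, a point of reference: the paper does not prove this statement at all --- Theorem~\ref{full2} is quoted from Matui \cite{Mat} --- so there is no internal proof to compare yours against, and your attempt must be judged on its own. Your soft group-theoretic reduction is correct (modulo a notational clash: the involutions you call $\tau_A$ are the paper's $\de_A=\Psi_{A,0,1}$; in the paper $\tau_U$ denotes an element of order $3$). Writing $N$ for the subgroup generated by all such involutions, the identity $T\de_A T^{-1}=\de_{T(A)}$ (a special case of Lemma~\ref{psi-gen1}) shows $T$ normalises $N$, each generator of $N$ lies in $\ker I$, and granting $[[T]]=\langle T, N\rangle$ one gets $[[T]]=\langle T\rangle N$ with $N$ normal, so $I$ descends to an isomorphism $[[T]]/N\to\bZ$ and $\ker I=N$. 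That chain of deductions is sound.

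The genuine gap is that the entire content of the theorem has been transferred into the unproved claim $[[T]]=\langle T,N\rangle$, which is exactly Theorem~\ref{full3} of the paper --- a statement the paper presents as a \emph{consequence} of Theorem~\ref{full2} (together with Theorem~\ref{full1} and the decomposition of finite-order elements into $\de_U$'s). So you cannot simply cite it without circularity; you must prove it independently, and your Kakutani--Rokhlin sketch stops precisely at the step where all the work lies. Refining a tower so that $g$ carries atoms to atoms, and decomposing a column-preserving map into adjacent transpositions $\de_{T^iY_j}$, are indeed routine; but the construction of the correcting element $\sigma\in\langle T,N\rangle$ that absorbs the atoms $g$ transports over the ceiling of one column into the base of another is the substance of the theorem. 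One must show that this inter-column traffic can be cancelled by a single global power of $T$ together with finitely many involutions, which requires controlling the net flux of atoms across the ceiling of the tower --- this is where the index reappears in disguise, and it is a nontrivial bookkeeping argument, not a formality. You explicitly flag this step as ``the main obstacle'' and do not carry it out, so the proposal is an honest and correct reduction of Theorem~\ref{full2} to Theorem~\ref{full3}, but not a proof of either.
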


Theorem \ref{full2} was proved by Matui \cite{Mat} who initiated the 
systematic study of group-theoretic properties of topological full 
groups (he also introduced the notation $[[T]]$).

We can construct many elements of finite order in $[[T]]$ as follows.  Let 
$U\subset X$ be a clopen set.  Suppose that for some integers $M$ and $N$, 
$M<N$, the sets $T^M(U),T^{M+1}(U),\dots,T^N(U)$ are pairwise disjoint.  
Then one can define a transformation $\Psi_{U,M,N}:X\to X$ by
$$
\Psi_{U,M,N}(x)=\left\{\!
\begin{array}{cl}
T(x)& \mbox{if }\, x\in T^M(U)\cup T^{M+1}(U)\cup\ldots\cup T^{N-1}(U),\\
T^{M-N}(x)& \mbox{if }\, x\in T^N(U),\\
x& \mbox{otherwise}.
\end{array}
\right.
$$
By construction, $\Psi_{U,M,N}$ is an element of the topological full group
$[[T]]$ of order $N-M+1$.  We are also going to use alternative notation
$\de_U$ for the map $\Psi_{U,0,1}$ and $\tau_U$ for the map $\Psi_{U,0,2}$. 
Each $\de_U$ is an element of order $2$ while each $\tau_U$ is an element 
of order $3$ (hence the notation: $\de$ as in $\de\acute\upsilon o$, 
$\tau$ as in $\tau\rho\acute\iota\alpha$).

It is not hard to show that every element of finite order in $[[T]]$ can be 
decomposed as a product of elements of the form $\de_U$.  Together with 
Theorems \ref{full1} and \ref{full2}, this yields the following.

\begin{theorem}[{\cite{Mat}}]\label{full3}
The topological full group $[[T]]$ is generated by $T$ and all 
transformations of the form $\de_U$.
\end{theorem}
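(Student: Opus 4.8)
The plan is to reduce the statement, via Theorems \ref{full1} and \ref{full2}, to the purely ``finite'' claim that every element of finite order in $[[T]]$ is a product of maps of the form $\de_U$, and then to prove that claim by a sequence of successive simplifications. For the reduction: if $g\in[[T]]$ then $gT^{-I(g)}$ lies in $\ker I$, so $[[T]]$ is generated by $T$ together with $\ker I$; by Theorem \ref{full2}, $\ker I$ is generated by its elements of finite order; hence it suffices to show that each finite-order $g$ is a product of $\de_U$'s (the generator $T$ is genuinely needed, since $I(\de_U)=0$ for every $U$). Throughout I will use that a minimal homeomorphism of an infinite Cantor set has no periodic points; in particular $\{x:h(x)=x\}$ is clopen for every $h\in[[T]]$.

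So fix $g$ with $g^n=\mathrm{id}$. First I would split $X$ into the clopen $g$-invariant sets $X_d=\{x:g^d(x)=x\}\setminus\bigcup_{d'\mid d,\ d'<d}\{x:g^{d'}(x)=x\}$, $d\mid n$, and treat $g$ restricted to each $X_d$ separately: these restrictions lie in $[[T]]$, have pairwise disjoint supports, and multiply back to $g$. This reduces matters to the case in which $g$ acts freely with every orbit of size exactly $n$. In that case a standard greedy construction of a continuous $\bZ/n$-valued cocycle for $g$ yields a clopen ``base'' $B$ with $X=B\sqcup g(B)\sqcup\cdots\sqcup g^{n-1}(B)$; put $L_j=g^j(B)$. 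Since $g$ cyclically permutes the levels $L_0,\dots,L_{n-1}$, the factorization of an $n$-cycle into adjacent transpositions lets me write $g=\si_1\si_2\cdots\si_{n-1}$, where $\si_j\in[[T]]$ is the involution equal to $g$ on $L_{j-1}$, to $g^{-1}$ on $L_j$, and to the identity elsewhere; verifying this identity is a level-by-level check using $g^n=\mathrm{id}$.

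It then remains to decompose each $\si_j$ into $\de_U$'s. Partitioning $L_{j-1}$ according to the finitely many local values of the cocycle defining $g$, we get $L_{j-1}=\bigsqcup_p W_p$ with $g|_{W_p}=T^{k_p}$, and then $\si_j=\prod_p\de_{W_p,k_p}$ (the factors have pairwise disjoint supports), where $\de_{W,k}$ denotes the involution that interchanges the disjoint clopen sets $W$ and $T^k(W)$ via $T^{\pm k}$. Finally, to express a single $\de_{W,k}$ through the maps $\de_U$: since $T$ has no periodic points, for every $x\in W$ the points $x,Tx,\dots,T^kx$ are pairwise distinct, so by a compactness argument $W$ can be partitioned into clopen pieces $W_q$ with $W_q,T(W_q),\dots,T^k(W_q)$ pairwise disjoint; then $\de_{W,k}=\prod_q\de_{W_q,k}$, and each $\de_{W_q,k}$ --- being, combinatorially, the transposition exchanging the first and last of the $k+1$ pairwise disjoint sets $W_q,T(W_q),\dots,T^k(W_q)$ --- is the corresponding product of the adjacent swaps $\de_{T^i(W_q)}$, $0\le i\le k-1$.

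I expect the last two steps to be the main obstacle: a finite-order element is only \emph{piecewise} a power of $T$ (with varying exponents), so it cannot be handled as a single $\Psi_{U,M,N}$, and long-range swaps $\de_{W,k}$ must be broken up into the adjacent swaps $\de_U$. The point that makes this go through --- and the place to be careful --- is precisely that minimality forbids $T$-periodic points, which is what permits $W$ to be subdivided so that $W,TW,\dots,T^kW$ become pairwise disjoint; the remaining verifications are routine bookkeeping with clopen sets.
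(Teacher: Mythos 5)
Your proposal is correct and follows exactly the route the paper indicates: the paper does not prove Theorem \ref{full3} in detail but derives it from Theorems \ref{full1} and \ref{full2} together with the remark that every finite-order element of $[[T]]$ decomposes into transformations $\de_U$, which is precisely the reduction you carry out. Your filling-in of that remark (clopen splitting by exact period, a clopen fundamental domain for the free part, factorization of the cycle into adjacent level swaps, and subdivision of each swap using the absence of $T$-periodic points) is sound.
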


There is much more to say about the commutator group of $[[T]]$.

\begin{theorem}[{\cite{Mat}}]\label{full4}
The commutator group of $[[T]]$ is generated by all elements of the form
$\tau_U$.
\end{theorem}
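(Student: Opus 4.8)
The plan is to set $H:=\langle\tau_U:U,TU,T^2U\text{ pairwise disjoint}\rangle$ and prove $H=D([[T]])$ by establishing the two inclusions separately. The inclusion $H\subseteq D([[T]])$ is the easy, explicit half: whenever $\tau_U$ is defined the sets $TU,T^2U$ are disjoint, so $\delta_{TU}=\Psi_{TU,0,1}$ is defined, and a direct computation on the three sets $U,TU,T^2U$ (all other points being fixed by $\delta_U$ and $\delta_{TU}$) shows $\delta_{TU}\delta_U=\tau_U^{-1}$; since $\tau_U$ has order $3$ we get $[\delta_{TU},\delta_U]=(\delta_{TU}\delta_U)^2=\tau_U^{-2}=\tau_U$, so every $\tau_U$ is a commutator and hence $H\subseteq D([[T]])$.

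For the reverse inclusion I would show that $H$ is normal in $[[T]]$ and that $[[T]]/H$ is abelian; together these yield $D([[T]])\subseteq H$. By Theorem~\ref{full3}, $[[T]]$ is generated by $T$ and the maps $\delta_U$, and, using that $T$ is minimal hence aperiodic, any clopen $U$ with $U\cap TU=\emptyset$ can be partitioned into finitely many clopen pieces $U_m$ with $U_m,TU_m,T^2U_m$ pairwise disjoint, whence $\delta_U=\prod_m\delta_{U_m}$ (a commuting product, since $U\cap TU=\emptyset$ forces the supports $U_m\cup TU_m$ to be pairwise disjoint). So $[[T]]$ is already generated by $T$ together with the $\delta_U$ for which $U,TU,T^2U$ are pairwise disjoint, and it suffices to check: $[T,\delta_U]$ and $[\delta_U,\delta_V]$ lie in $H$ (for abelianness of the quotient), and $T^{\pm1}\tau_WT^{\mp1}$ and $\delta_V\tau_W\delta_V$ lie in $H$ (for normality), with $U,V$ such small sets and $W$ any admissible set. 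Two of these are immediate: conjugating the formula for $\Psi_{W,0,2}$ by $T^{\pm1}$ gives $T^{\pm1}\tau_WT^{\mp1}=\tau_{T^{\pm1}W}\in H$, and $T\delta_UT^{-1}=\delta_{TU}$ gives $[T,\delta_U]=\delta_{TU}\delta_U=\tau_U^{-1}\in H$. That leaves $[\delta_U,\delta_V]\in H$ and $\delta_V\tau_W\delta_V\in H$.

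Both of these remaining elements have small clopen support (inside $U\cup TU\cup V\cup TV$, respectively $\delta_V(W\cup TW\cup T^2W)$), and I would handle them uniformly via Kakutani--Rokhlin partitions. Write $c$ for the element in question and $S$ for the union of the supports of $c$ and of the $\delta_U,\delta_V,\tau_W$ entering it. Using minimality, choose a Kakutani--Rokhlin partition $\mathcal P=\{T^iB_j:1\le j\le r,\ 0\le i<h_j\}$ such that $c$ maps every atom of $\mathcal P$ onto an atom by a power of $T$, the set $S$ is a union of atoms, the base $\bigsqcup_jB_j$ is disjoint from $\bigcup_{|k|\le K}T^kS$ for $K$ a bound on the displacements involved, and every column height $h_j$ exceeds twice the maximal displacement. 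The third requirement forces $c$ to fix pointwise the bottom $K$ and top $K$ levels of every column, and the height condition then forces $c$ to permute the remaining levels of each column \emph{within that column}; thus $c=\prod_jc_j$ with $c_j$ supported in column $j$ and permuting its atoms. The transposition-bundles $\delta_{T^iB_j}$ ($0\le i<h_j-1$) satisfy the Coxeter relations of $S_{h_j}$ and generate a copy of it acting on column $j$, and — the displacement being far below $h_j$ — each $c_j$ lies in this copy. For $c=[\delta_U,\delta_V]$ the factor $c_j$ is a commutator in $S_{h_j}$; for $c=\delta_V\tau_W\delta_V$ it has order dividing $3$; either way $c_j$ is an even permutation, so it lies in $A_{h_j}$, which is generated by the $3$-cycles on consecutive levels, namely the $\tau_{T^iB_j}\in H$. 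Hence $c=\prod_jc_j\in H$.

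The step I expect to be the main obstacle is exactly the construction of a Kakutani--Rokhlin partition with all four properties at once (respecting $c$, fine enough that $S$ is a union of atoms, with base disjoint from a neighbourhood of $S$, and with uniformly tall columns), together with the auxiliary facts that the transposition-bundles along a column really do realize $S_{h_j}$ and that an element of $[[T]]$ supported in a tall column and permuting its atoms is determined by the induced permutation. Granting these, everything else rests only on the elementary group theory $D(S_n)=A_n$ and $A_n=\langle\,(i\ i{+}1\ i{+}2)\,\rangle$, and on the commutator identities computed above.
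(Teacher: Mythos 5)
This theorem is not proved in the paper: it is imported verbatim from Matui \cite{Mat}, so there is no in-paper argument to measure you against. Your strategy --- put $H=\langle\tau_U\rangle$, get $H\subseteq D([[T]])$ from the identity $[\delta_{TU},\delta_U]=(\delta_{TU}\delta_U)^2=\tau_U^{-2}=\tau_U$, and get the reverse inclusion by showing $H$ is normal with abelian quotient via a Kakutani--Rokhlin partition and $D(S_n)=A_n=\langle(i,i{+}1,i{+}2)\rangle$ --- is essentially the standard (Matui-style) argument. Your explicit computations are all correct and are special cases of the paper's Lemmas \ref{psi-gen1}--\ref{psi-gen4}: $\delta_{TU}\delta_U=\tau_U^{-1}$, $[T,\delta_U]=\delta_{TU}\delta_U$, $T^{\pm1}\tau_WT^{\mp1}=\tau_{T^{\pm1}W}$, and the splitting $\delta_U=\prod_m\delta_{U_m}$ into pieces with $U_m,TU_m,T^2U_m$ pairwise disjoint.

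The genuine gap is the third condition you impose on the Kakutani--Rokhlin partition, that the base be disjoint from $\bigcup_{|k|\le K}T^kS$. Nothing in your reduction guarantees that this union is a proper subset of $X$, and it can easily be all of $X$: in the present subshift, for instance, $[.a]\cup T([.a])=\Omega$ already, because consecutive non-$a$ letters never occur; more generally your reduction only forces $U,TU,T^2U$ to be pairwise disjoint, which leaves $S=U\cup TU\cup V\cup TV$ large enough that its $K$-neighbourhood along the orbit can cover everything. If no admissible base exists, $c$ need not preserve columns and the whole column-by-column decomposition collapses. The repair is easy but must be said: fix a $T$-invariant Borel probability measure $\mu$ (it exists, and every nonempty clopen set has positive $\mu$-measure by minimality), refine the generating sets once more so that every $U$, $V$, $W$ occurring has $\mu$-measure below $1/(2(2K+1)\cdot 4)$; then $\mu\bigl(\bigcup_{|k|\le K}T^kS\bigr)<1$, its complement is a nonempty clopen set, and a first-return tower over a small clopen subset of that complement --- refined so that the cocycles of $c$, $\delta_U$, $\delta_V$, $\tau_W$ are constant on atoms and $S$ is a union of atoms --- satisfies all four of your requirements. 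One further point to make explicit: for $c=[\delta_U,\delta_V]$ the factor $c_j$ is a commutator \emph{in} $S_{h_j}$ only because $\delta_U$ and $\delta_V$ themselves decompose along columns (their supports lie in $S$, hence avoid the column boundaries), giving $[\delta_U,\delta_V]=\prod_j[\delta_{U,j},\delta_{V,j}]$; for $c=\delta_V\tau_W\delta_V$ the order-$3$ argument is fine as stated. With these two repairs the proof is complete.
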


\begin{theorem}[{\cite{Mat}}]\label{full5}
The commutator group of $[[T]]$ is simple.
\end{theorem}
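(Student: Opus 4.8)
The aim is to show that the commutator group $D$ of $G:=[[T]]$ has no proper nontrivial normal subgroup. First, $D$ itself is nontrivial: since $T$, being minimal on an infinite Cantor set, is aperiodic, there is a nonempty clopen $U$ with $U,TU,T^2U$ pairwise disjoint, and then $\tau_U$ has order $3$ and lies in $D$ by Theorem~\ref{full4}. So fix a normal subgroup $N\triangleleft D$ with $N\ne\{1\}$; by Theorem~\ref{full4} it is enough to show $\tau_U\in N$ for every such ``admissible'' $U$. The plan rests on Kakutani--Rokhlin partitions. After refining such a partition, any prescribed element of $G$ becomes a \emph{tower permutation}: it maps each tower to itself and, inside a tower of height $h$, permutes the $h$ levels among themselves, acting on each level by a suitable power of $T$; conversely every tower permutation lies in $G$. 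I will use repeatedly that every even permutation of the levels of a single tower $\mathcal{C}$ lies in $D$: it is a product of $3$-cycles of triples of levels of $\mathcal{C}$, and each such $3$-cycle is a conjugate in $G$ of some $\tau_U$, hence lies in $D$ because $D\triangleleft G$. Write $A(\mathcal{C})\subseteq D$ for the group of even level-permutations of $\mathcal{C}$; thus $A(\mathcal{C})\cong A_h$ if $\mathcal{C}$ has height $h$.

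The first step puts a whole such alternating group into $N$. Choose $g\in N\setminus\{1\}$ and a Kakutani--Rokhlin partition fine enough that $g$ is a tower permutation and every tower has height $\ge5$; then $g$ acts on some tower $\mathcal{C}$ by a nontrivial permutation $\sigma$ of its $h\ge5$ levels. Since $A_h$ has trivial centralizer in $\mathrm{Sym}(h)$ for $h\ge4$, there is $\rho\in A(\mathcal{C})\subseteq D$ with $[\rho,g]\ne1$; and $[\rho,g]\in N$ because $g\in N\triangleleft D$, while $[\rho,g]$ is supported on $\mathcal{C}$ and acts there by the even permutation $[\rho,\sigma]$. Hence $N\cap A(\mathcal{C})$ is a nontrivial normal subgroup of the simple group $A(\mathcal{C})\cong A_h$, so $A(\mathcal{C})\subseteq N$. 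Refining further, the base $B$ of $\mathcal{C}$ can be taken with arbitrarily small diameter and $\mathcal{C}$ with arbitrarily large height.

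The second step, the main one, transports this to all of $D$. Splitting an admissible $U$ into finitely many small pieces and writing $\tau_U$ as the product of the corresponding commuting $3$-cycles, one reduces to proving $\tau_W\in N$ for $W$ as small as one likes. By minimality of $T$ the clopen sets $V$ with $T^mV\subseteq B$ for some $m$ cover $X$; taking a finite refinement of such a cover and subdividing $W$ along it, one further reduces to the case $W=T^{-m}E$ with $E\subseteq B$ clopen. It remains to show (i) $\tau_E\in N$ and (ii) $\tau_W=\tau_{T^{-m}E}\in N$. For (i): take $\tau_B$ and the $3$-cycle of levels $\{B,TB,T^4B\}$ of $\mathcal{C}$, both in $A(\mathcal{C})\subseteq N$, form an iterated commutator, decompose it over a fine clopen partition of $B$, and isolate it on one piece by bracketing against a $\tau$ supported there; this produces a nontrivial element of the group $A_5$ of even permutations of a five-level column over $E$, and since that $A_5$ lies in $D$ and is simple, all of it — in particular $\tau_E$ — lies in $N$. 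For (ii): a transposition interchanging two clopen sets that are $T$-translates of one another belongs to $G$, and an even product of such transpositions (of translates of a single clopen set) lies in $D$, because $3$-cycles of mutually $T$-translate clopen sets are conjugates in $G$ of $\tau_U$'s; using such transpositions one assembles $p\in D$ with $p\,\tau_E\,p^{-1}=\tau_W$, fixing the parity of $p$ by an extra transposition on a part of the column disjoint from $E,TE,T^2E$, which leaves the conjugation unchanged. Then $\tau_W=p\,\tau_E\,p^{-1}\in N$, so every $\tau_U$ lies in $N$, whence $N=D$ by Theorem~\ref{full4} and $D$ is simple.

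The first step is soft — simplicity of finite alternating groups carries it. The real work, and the main obstacle, is the transport in the second step: one must use minimality of $T$ to arrange that an element extracted from $N$ sits over a clopen set that can be moved onto the prescribed column $W,TW,T^2W$, and — this is the delicate point — one must check at each stage that the conjugating elements can be taken in $D$, not merely in $G$; this parity constraint, together with the commutator bookkeeping needed to confine elements to small columns, is what I expect to occupy most of the proof.
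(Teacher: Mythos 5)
First, a point of reference: the paper does not prove this statement at all --- Theorem~\ref{full5} is quoted from Matui \cite{Mat} --- so there is no ``paper's own proof'' to compare against, and your sketch has to stand on its own. It does not, because of one concrete false claim on which the whole first step rests: that ``after refining a Kakutani--Rokhlin partition, any prescribed element of $G$ becomes a tower permutation,'' i.e.\ preserves each tower and permutes its levels. A tower permutation lies in the finite group $\prod_v\mathrm{Sym}(h_v)$ attached to the partition, hence has finite order; so $T$ itself is never a tower permutation, and neither is any infinite-order element of $D=[[T]]'$. The correct statement in this circle of ideas is only that certain subgroups (e.g.\ the stabilizer $[[T]]_x$ of a forward orbit, or individual finite-order elements after a nontrivial lemma) are exhausted by such finite permutation groups. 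Your Step~1 applies the claim to an arbitrary $g\in N\setminus\{1\}$, and nothing guarantees such a $g$ has finite order, let alone that it preserves every tower of some partition. Since everything downstream (extracting a nontrivial even level-permutation $[\rho,\sigma]$, hence a full $A(\mathcal{C})$ inside $N$) is launched from this point, the argument does not get off the ground as written. You would either have to prove first that every nontrivial normal subgroup of $D$ contains a nontrivial finite-order element and then invoke (and prove) the lemma that finite-order elements of $[[T]]$ are tower permutations, or abandon the reduction.

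The standard route (Matui's, and the Bezuglyi--Medynets style argument) avoids this entirely: given $g\in N\setminus\{1\}$, minimality and the Hausdorff property yield a nonempty clopen $U$ with $g(U)\cap U=\emptyset$; then for $h,k$ supported in $U$ one has the identity $[[h,g],k]=[h,k]$, because $ghg^{-1}$ is supported in $g(U)$ and commutes with everything supported in $U$. This puts every commutator of elements supported in a sufficiently small clopen set into $N$, and one concludes by fragmenting the $\tau_V$'s (via Lemma~\ref{psi-gen2}-type decompositions and conjugation) and using Theorem~\ref{full4}. Your second step --- transporting an $A_5$ over the base $B$ to an arbitrary small column $W,TW,T^2W$ with parity-corrected conjugators in $D$ --- is the right kind of bookkeeping and would mesh with that approach, but as presented it is layered on top of a first step that is not valid.
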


\begin{theorem}[{\cite{Mat}}]\label{full6}
The commutator group of $[[T]]$ is finitely generated if and only if $T$ is 
topologically conjugate to a (minimal) subshift.
\end{theorem}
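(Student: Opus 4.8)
Write $D$ for the commutator group of $[[T]]$. By Theorem~\ref{full4} the group $D$ is generated by the transformations $\tau_U$, and by Theorem~\ref{full5} it is simple; I will use both facts, together with the remark that any minimal homeomorphism of a Cantor set is aperiodic. The plan is to prove the two implications separately. For the direction ``$D$ finitely generated $\Rightarrow$ $T$ a subshift'' I would use the $\tau_U$ to force a suitable finite clopen partition of $X$ to be a generating one. For the converse I would construct, from a Kakutani--Rokhlin partition of the subshift, an explicit finite generating set of $D$; this is the substantial direction (it is Matui's theorem \cite{Mat}), and the main obstacle it presents is described at the end.

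\emph{Suppose first that $D$ is finitely generated}, say by a finite set $S$. Each $s\in S$ has the form $s=T^{\nu_s}$ with $\nu_s\colon X\to\bZ$ locally constant, so there is a finite clopen partition $\mathcal{P}$ of $X$ on whose atoms every $\nu_s$ is constant. Define $c\colon X\to\mathcal{P}^{\bZ}$ by letting $c(x)_n$ be the atom of $\mathcal{P}$ containing $T^nx$, and set $Y:=c(X)$; then $Y$ is a closed shift-invariant subset of $\mathcal{P}^{\bZ}$, i.e.\ a subshift with shift $\si$, and $c\circ T=\si\circ c$, so $(Y,\si)$ is a minimal factor of $(X,T)$. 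The key step is that \emph{every element of $D$ descends through $c$}: let $G_0$ be the set of $g\in[[T]]$ for which there is $\bar g\in[[\si]]$ with $c\circ g=\bar g\circ c$. One checks that $G_0$ is a subgroup of $[[T]]$, and that each $s\in S$ lies in $G_0$: since $\nu_s$ is constant on atoms it factors as $\nu_s=\tilde\nu_s\circ c$ for a continuous $\tilde\nu_s$, and the cocycle of $s^{-1}$ likewise factors through a bounded window of $c$, so both $s$ and $s^{-1}$ descend. Hence $D=\langle S\rangle\subseteq G_0$. It then remains to show $c$ is injective, for then $c$ is a topological conjugacy of $T$ with the minimal subshift $(Y,\si)$ and we are done. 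Suppose it is not. If $\mathcal{P}$ has a single atom, every $\nu_s$ is constant, so $D\subseteq\langle T\rangle$ is torsion-free --- impossible, as $D$ contains an element $\tau_U$ of order $3$. If $Y$ is finite, it is a single $\si$-orbit of some size $p\ge 2$ and $c$ is the quotient map onto $\bZ/p$; choosing a clopen $V$ with $\emptyset\neq V$, with $V,TV,T^2V$ pairwise disjoint, and with $c^{-1}(0)\setminus(V\cup T^2V)$ nonempty, the element $\tau_V\in D\subseteq G_0$ must descend through $c$, yet it sends some point of $c^{-1}(0)$ into $c^{-1}(1)$ and fixes another point of $c^{-1}(0)$, so $c\circ\tau_V$ cannot factor through $c$ --- a contradiction. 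If $Y$ is infinite, it has no $\si$-periodic points, so whenever $c(x)=c(y)$ with $x\neq y$ the points $x,y$ lie on distinct $T$-orbits; picking a clopen $V\ni x$ disjoint from $\{y,T^{-1}y,T^{-2}y\}$ with $V,TV,T^2V$ pairwise disjoint, we get $\tau_V\in D$ with $\nu_{\tau_V}(x)=1$ and $\nu_{\tau_V}(y)=0$, whereas descent together with the $\si$-aperiodicity of $c(x)=c(y)$ forces $\nu_{\tau_V}(x)=\nu_{\tau_V}(y)$ --- again a contradiction.

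\emph{Suppose now that $T$ is a minimal subshift}, $X\subseteq A^{\bZ}$. Using Theorem~\ref{full4}, the identity $\tau_{U_1\sqcup U_2}=\tau_{U_1}\tau_{U_2}$ (a product of commuting factors once the towers over $U_1$ and $U_2$ are disjoint), and the observation that by minimality all sufficiently long constant words leave the language, one reduces to generating the elements $\tau_C$ with $C$ a two-sided cylinder defined by a non-constant word. I would then pass to the Kakutani--Rokhlin partition with base a cylinder $[w]$, with $w$ chosen long enough that all resulting column heights are $\ge 3$: finitely many return words $r^{(1)},\dots,r^{(m)}$ give clopen column bases $Y_1,\dots,Y_m$ and heights $h_1,\dots,h_m$ with $\{T^jY_i\}$ a clopen partition of $X$. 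As generators I would take a finite collection of $\tau$'s (and their commutators) adapted to this partition: the ``vertical'' ones supported inside single columns (the $\tau_{T^jY_i}$, from which longer vertical moves lying in $D$ can be built), together with finitely many supported near $[w]$ that realize the induced transformation $T_{[w]}$ on the derived subshift over the alphabet of return words. The remaining task is to prove that every $\tau_C$ lies in the group so generated: decompose $C$ into the pieces $T^jY_i\cap C$, transport each piece within and between columns using the chosen generators, and thereby rewrite $\tau_C$ as a product of $\tau$'s of cylinders defined by strictly shorter words --- an induction on word length.

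The hard part will be precisely this last reduction, and its essential difficulty is that $T\notin D$ (since $I(T)=1$): one cannot normalize the position of a cylinder by conjugating its $\tau$ by a power of $T$, so every translation used in the induction must be simulated inside $D$ by an index-zero element imitating $T$ on the relevant finite union of columns --- and such an element exists only with bounded ``width'', which is why the argument has to be organized around a single Kakutani--Rokhlin partition rather than a refining sequence of them. A secondary difficulty is that moving between columns reduces matters to the induced system $T_{[w]}$, so the statement is needed one level down in a uniform form; I would set up the recursion so that the derived subshift is genuinely simpler (for instance by controlling the size of a suitably chosen clopen base), which is what lets finite generation propagate. This is, in effect, the circle of ideas in Matui's proof.
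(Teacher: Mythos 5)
The paper does not prove this statement: Theorem \ref{full6} is quoted from \cite{Mat}, and the only related argument in the paper is the proof of Theorem \ref{main1} in Section \ref{tfg}, which the author merely describes as ``loosely modeled upon'' Matui's proof. So there is no in-paper proof to compare yours against, and I can only judge the proposal on its own terms. Your ``only if'' direction (finitely generated $\Rightarrow$ subshift) is essentially complete and correct: the descent of the generators and their inverses through the itinerary map $c$, the reduction to injectivity of $c$, and the contradiction obtained from a well-placed $\tau_V$ (using Theorem \ref{full4} to know $\tau_V$ lies in the commutator group) all work. The only bookkeeping slips are in the finite-$Y$ case, where you should also take $V\subseteq c^{-1}(0)$ so that some point really moves from fiber $0$ to fiber $1$, and exclude $TV$ along with $V\cup T^2V$ when choosing the fixed point; both are trivially repaired.

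The ``if'' direction, however, is not a proof. The entire content of that implication is the claim that every $\tau_C$ lies in the finitely generated group attached to a single Kakutani--Rokhlin partition, and you explicitly leave that reduction --- the induction on word length and the simulation of translations by index-zero elements --- as ``the hard part.'' Naming the obstacle is not the same as overcoming it, and this is exactly where all the work in Matui's theorem sits; as written, the forward implication remains unproved. If you want to see the missing induction carried out in a concrete case, Section \ref{tfg} of this paper does it for the Lysenok subshift: Lemma \ref{cyl2} normalizes every cylinder of dimension $2^n$ to $T^N([.w_nl])$ (conjugation by powers of $T$ is available there because the target group is $[[T]]$ rather than its commutator group, which sidesteps your index-zero difficulty); Lemma \ref{psi-gen4} is the commutator identity that manufactures the $\tau$'s of the next level from the current one; and Lemmas \ref{psi-tau-all} and \ref{psi-de-all} are the two-way inclusions that make the induction close up. Your sketch points in the right direction, but the substantial half of Theorem \ref{full6} is still missing.
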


Now we introduce a specific transformation $T$, a substitution subshift.  
Let $\si$ denote the \emph{Lysenok substitution} over the alphabet 
$\cA=\{a,b,c,d\}$, namely, $\si(a)=aca$, $\si(b)=d$, $\si(c)=b$, and 
$\si(d)=c$.  This substitution was originally used by Lysenok \cite{Lys} to 
obtain a nice recursive presentation of the Grigorchuk group:
$$
\mathcal{G}=\langle a,b,c,d\mid 
1=a^2=b^2=c^2=d^2=bcd=\si^k((ad)^4)=\si^k((adacac)^4),\,\, k\ge0\rangle.
$$
The substitution $\si$ acts naturally on the set $\cA^*$ of finite words 
over the alphabet $\cA$ as well as on the set $\cA^{\bN}$ of infinite words 
over $\cA$.  There exists a unique infinite word $\xi\in\cA^{\bN}$ fixed by 
$\si$: $\xi=acabacad\dots$  Let $T:\Om\to\Om$ be the two-sided subshift 
generated by $\xi$.  The phase space $\Om$ of the subshift $T$ consists of 
bi-infinite sequences $\om=\dots\om_{-2}\om_{-1}\om_0.\om_1\om_2\dots$ such 
that every finite subword $\om_l\om_{l+1}\dots\om_{m-1}\om_m$ occurs
somewhere in $\xi$.  The transformation is defined by
$T(\om)=\dots\om_{-1}\om_0\om_1.\om_2\om_3\dots$

\begin{theorem}[{\cite{Vor}}]\label{sub1}
The subshift $T$ is a minimal homeomorphism of the Cantor set $\Om$.
\end{theorem}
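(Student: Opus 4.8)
The plan is to verify three things: that $\Omega$ is a Cantor set, that $T$ is a homeomorphism of $\Omega$, and that $T$ is minimal. The homeomorphism assertion and three of the four defining properties of a Cantor set are formal. By its definition through forbidden words, $\Omega$ is a closed, shift-invariant subset of $\cA^\bZ$, a space that is compact, metrizable and totally disconnected; so $\Omega$ inherits those three properties, and $T$ — the restriction to $\Omega$ of the shift homeomorphism of $\cA^\bZ$ — is a homeomorphism of $\Omega$. What remains is minimality, together with the fourth property, the absence of isolated points, which I will obtain at the end from minimality and aperiodicity of $\xi$. Write $L(\xi)$ for the set of finite factors of $\xi$, so that $\Omega=\{\omega\in\cA^\bZ:\ \text{every factor of}\ \omega\ \text{lies in}\ L(\xi)\}$.

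Everything hinges on a self-similarity of $\xi$: \emph{$\xi_n=a$ exactly when $n$ is even}. I would prove this by induction on $k$ for the prefixes $\sigma^k(a)$ of $\xi$. Since $\sigma(a)=aca$ begins and ends with $a$, one has $\sigma^{k+1}(a)=\sigma^k(a)\,\sigma^k(c)\,\sigma^k(a)$, so these words — of odd and unboundedly growing length — converge to $\xi$; and writing $\sigma^k(a)=x_0x_1\cdots$ and $\sigma^{k+1}(a)=\sigma(x_0)\sigma(x_1)\cdots$, the letters $x_{2j}=a$ expand to $\sigma(a)=aca$ (which carries $a$ at its own even positions) while the letters $x_{2j+1}\in\{b,c,d\}$ expand to single letters, so the even/odd pattern reproduces. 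Two consequences. First, $\xi$ has no factor $aa$. Second, iterating $\sigma(\xi)=\xi$ and grouping the image blocks according to the parity of their index, for every $k\ge 1$
$$\xi=\sigma^k(a)\ \ell^{(k)}_0\ \sigma^k(a)\ \ell^{(k)}_1\ \sigma^k(a)\ \ell^{(k)}_2\ \cdots,\qquad \ell^{(k)}_i:=\sigma^k(\xi_{2i+1}),$$
where each separator $\ell^{(k)}_i$ is a \emph{single} letter of $\{b,c,d\}$, because $\sigma$ acts on $\{b,c,d\}$ as the $3$-cycle $b\mapsto d\mapsto c\mapsto b$. This is the one point demanding more care than in the primitive case: $\sigma$ is \emph{not} primitive — the letters $b,c,d$ never proliferate — so the classical substitution machinery does not apply off the shelf, and the factorization just displayed, with its single-letter separators, is what takes its place.

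Granting this, $\xi$ is uniformly recurrent, and minimality of $T$ follows by the standard argument. Let $w$ be a factor of $\xi$. Since $|\sigma^k(a)|\to\infty$ and the $\sigma^k(a)$ exhaust $\xi$, the word $w$ occurs in $\sigma^k(a)$ for some $k$; fix such a $k$ and put $L=|\sigma^k(a)|$. In the displayed factorization the copies of $\sigma^k(a)$ begin at positions $0,L+1,2(L+1),\dots$, so every factor of $\xi$ of length $2L+1$ contains a full copy of $\sigma^k(a)$, hence of $w$. Thus $w$ occurs in every sufficiently long factor of $\xi$. Next, $L(\Omega)=L(\xi)$: the inclusion $\subseteq$ is the definition of $\Omega$, while $\supseteq$ holds because every factor of $\xi$, recurring infinitely often, is extendable on both sides within $L(\xi)$ — which, by the usual compactness argument, also shows $\Omega\neq\emptyset$. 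Finally, if $\emptyset\neq Y\subseteq\Omega$ is closed and $T$-invariant and $y\in Y$, then the factors of $y$ all lie in $L(\xi)$, and conversely every $w\in L(\xi)$ occurs in $y$, since $y$ contains arbitrarily long factors of $\xi$. Hence $Y$ contains the orbit closure of $y$, which is all of $\Omega$, and $T$ is minimal.

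It remains to see that $\Omega$ has no isolated points. In a minimal system the isolated points form an open invariant set, hence one that is empty or all of $\Omega$; and since a compact discrete space is finite, it suffices that $\Omega$ be infinite, for which — $\xi$ being uniformly recurrent — it suffices that $\xi$ not be periodic. Suppose $\xi$ had minimal period $P$. Then $P$ is even, because $\xi_P=\xi_0=a$ and $a$ occurs only at even positions; put $\eta_m:=\xi_{2m+1}\in\{b,c,d\}$, which is then periodic of period $P/2$, and read the level-one factorization at positions $4m+1$ and $4m+3$ to get $\eta_{2m}=c$ and $\eta_{2m+1}=\sigma(\eta_m)$. Let $Q$ be the minimal period of $\eta$; since $\eta_1=\sigma(\eta_0)=\sigma(c)=b$ while $\eta_{2m}=c$ for every $m$, no period of $\eta$ can be odd, so $Q$ is even. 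But the odd-indexed subsequence $(\eta_{2m+1})_m=(\sigma(\eta_m))_m$ is just $\eta$ with its letters relabelled by the bijection $\sigma|_{\{b,c,d\}}$, so it has minimal period $Q$; at the same time, since $\eta$ has the (even) period $Q$, that subsequence has period $Q/2$. Hence $Q\mid Q/2$, which is impossible. So $\xi$ is aperiodic, $\Omega$ is infinite, and $\Omega$ is a Cantor set. I expect the non-primitivity of $\sigma$ to be the real obstacle throughout: it blocks the off-the-shelf minimality and aperiodicity arguments and forces the self-similar factorization $\xi=\sigma^k(a)\,\ell_0\,\sigma^k(a)\,\ell_1\cdots$, with its single-letter separators, to be extracted and exploited by hand; everything else is routine.
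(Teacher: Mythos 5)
Your proposal is correct, but note that the paper does not prove this statement at all: Theorem \ref{sub1} is imported from the reference [Vor], so there is no in-paper proof to match. What you have written is a sound, self-contained argument along the standard lines, and its two structural pillars are exactly the facts the paper later establishes for other purposes: your ``$a$ occurs precisely at the even positions'' is the paper's Lemma \ref{seq2} (stated there with $1$-based indexing, so the parity is reversed --- your usage is internally consistent), and your factorization $\xi=\si^k(a)\,\ell_0\,\si^k(a)\,\ell_1\cdots$ with single-letter separators is the decomposition $\xi=w_ns_1w_ns_2\cdots$ underlying Lemma \ref{seq4}. From there, linear recurrence, $L(\Om)=L(\xi)$, and minimality are routine, as you say. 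Your aperiodicity argument (passing to $\eta_m=\xi_{2m+1}$ and deriving $Q\mid Q/2$) is correct but slightly more elaborate than necessary: the paper's own device in the proof of Lemma \ref{cyl1}, namely that $\xi_N\ne\xi_{2N}$ for every $N$ by Lemma \ref{seq2}, kills any period $N$ in one line and would shorten that step. Your emphasis on the non-primitivity of $\si$ is well placed --- it is precisely why one must extract the factorization by hand rather than quote the primitive-substitution machinery --- and I find no gap in the argument.
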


Given two finite words $u$ and $w$ over the alphabet $\cA$, we denote by
$[u.w]$ the set of all bi-infinite sequences
$\om=\dots\om_{-2}\om_{-1}\om_0.\om_1\om_2\dots$ in $\Om$ such that 
$\om_{-M+1}\dots\om_{-1}\om_0=u$ and $\om_1\om_2\dots\om_N=w$, where $M$ is
the length of $u$ and $N$ is the length of $w$ ($M,N\ge0$).  We refer to
$[u.w]$ as a \emph{cylinder of dimension} $M+N$.  The cylinder is a clopen
set.  Any clopen subset of $\Om$ splits into a disjoint union of cylinders
of dimension $N$ provided that $N$ is large enough.

The cylinder $[u.w]$ is a nonempty set if and only if the concatenated word 
$uw$ occurs in $\xi$ infinitely many times.  Infinitely many occurrences 
are required since $\xi$ is an infinite sequence while elements of $\Om$ 
are bi-infinite sequences.  Actually, $\xi$ is a Toeplitz sequence (see 
\cite{Vor} or Lemma \ref{seq2} below), which implies that every word 
occurring in $\xi$ does this infinitely often.
If at least one of the words $u$ and $w$ is not empty, then the cylinder
$[u.w]$ is disjoint from its image $T([u.w])$ (because there are no double
letters in $\xi$) so that the transformation $\de_{[u.w]}$ is well defined.

A direct relation between the Grigorchuk group $\mathcal{G}$ and the 
topological full group of the substitution subshift $T$ was established by 
Matte Bon \cite{M-B} who showed that $[[T]]$ contains a copy of 
$\mathcal{G}$.

\begin{theorem}[{\cite{M-B}}]\label{full-sub1}
The subgroup of $[[T]]$ generated by $\de_{[.a]}$, $\de_{[.b]}\de_{[.c]}$,
$\de_{[.c]}\de_{[.d]}$, and $\de_{[.d]}\de_{[.b]}$ is isomorphic to the
Grigorchuk group.
\end{theorem}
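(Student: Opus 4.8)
The plan is to verify that the four generators satisfy the Lysenok presentation of $\mathcal G$, and then to promote the resulting surjection onto $G:=\langle \de_{[.a]},\de_{[.b]}\de_{[.c]},\de_{[.c]}\de_{[.d]},\de_{[.d]}\de_{[.b]}\rangle$ to an isomorphism using that $\mathcal G$ is just infinite. Write $g_1=\de_{[.a]}$, $g_2=\de_{[.b]}\de_{[.c]}$, $g_3=\de_{[.c]}\de_{[.d]}$, $g_4=\de_{[.d]}\de_{[.b]}$. First I would record the easy relations. Since $\xi$ contains no two equal consecutive letters, the supports $[.\be]\cup[\be.]$ for $\be\in\{b,c,d\}$ are pairwise disjoint, so $\de_{[.b]}$, $\de_{[.c]}$, $\de_{[.d]}$ are pairwise commuting involutions; hence each $g_i$ is an involution and $g_2g_3g_4=1$. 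Thus $a\mapsto g_1$, $b\mapsto g_2$, $c\mapsto g_3$, $d\mapsto g_4$ defines a homomorphism from $\Gamma:=\langle a,b,c,d\mid a^2=b^2=c^2=d^2=bcd=1\rangle$ onto $G$, and since $\mathcal G$ is obtained from $\Gamma$ by imposing the further relations $\si^k((ad)^4)=\si^k((adacac)^4)=1$ for all $k\ge0$, it remains to show (i) that these relators map to $1$ in $G$, and (ii) that the induced surjection $\mathcal G\to G$ is injective.

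For (i) the crucial input is the self-similarity of the subshift coming from recognizability of $\si$. Recognizability provides a clopen set $Y\subset\Om$ and a homeomorphism $\hat\si\colon\Om\to Y$ (``apply $\si$ letter by letter and track the position of the dot''), together with a description of how $\Om$ is tiled by finitely many $T$-translates of pieces of $Y$ --- equivalently, a precise list of which cylinders lie at a $\si$-block boundary. From this structure one extracts an endomorphism $\Sigma$ of $G$ that implements the substitution $\si$ on the generators: $\Sigma(g_1)=g_1g_3g_1$, $\Sigma(g_2)=g_4$, $\Sigma(g_3)=g_2$, $\Sigma(g_4)=g_3$ (matching $\si(a)=aca$, $\si(b)=d$, $\si(c)=b$, $\si(d)=c$ under the correspondence above). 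Granting $\Sigma$, one checks the two base relators $(g_1g_4)^4=1$ and $(g_1g_4g_1g_3g_1g_3)^4=1$ directly: each is an element of $[[T]]$, i.e.\ a piecewise power of $T$ whose exponent function is constant on cylinders of some explicitly bounded dimension, so its triviality is decided by inspecting finitely many cylinders. Since $\Sigma$ is a homomorphism agreeing with $\si$ on generators, the image in $G$ of $\si^k(r)$ equals $\Sigma^k$ of the image of $r$; applying this to $r=(ad)^4$ and $r=(adacac)^4$ shows every Lysenok relator maps to $1$, so $\mathcal G\to G$ is well defined and onto.

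For (ii) I would use the theorem of Grigorchuk that $\mathcal G$ is just infinite: every proper quotient of $\mathcal G$ is finite, so it suffices to know that $G$ is infinite. This again follows from the self-similar structure: $\Sigma$ is injective (being induced by conjugation by a homeomorphism followed by extension by the identity on a fixed complement, both injective) while not surjective (every element of $\Sigma(G)$ fixes $\Om\setminus Y$ pointwise, whereas $g_1=\de_{[.a]}$ does not, as $[.a]\not\subset Y$). Hence $G\supsetneq\Sigma(G)\supsetneq\Sigma^2(G)\supsetneq\cdots$, so $G$ is infinite, and therefore the surjection $\mathcal G\to G$ has trivial kernel, i.e.\ $G\cong\mathcal G$.

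The main obstacle is step (i): making the self-similar structure explicit enough to define $\Sigma$ and to prove the displayed formulas. Two points demand care. First, one must determine the exact recognizability window of $\si$ --- which cylinders sit at a $\si$-block boundary --- in order to describe $Y$, the tiling of $\Om$, and the relevant conjugation precisely. Second, one must ensure the conjugation returns an endomorphism of $G$ itself; this is delicate because the generators $g_2,g_3,g_4$ are the \emph{products} $\de_{[.\be]}\de_{[.\ga]}$ (the individual maps $\de_{[.\be]}$ are not in $G$) and because the letter $a$ recurs at every level of the substitution hierarchy, so one has to verify that the conjugation respects this product structure and carries $g_1$ exactly to $g_1g_3g_1$ rather than to a nearby element of $[[T]]$ outside $G$. (An alternative to (i)+(ii) is to build, from the same analysis, an explicit homeomorphism $\Om\to\dT$ conjugating the $G$-action to the standard action of $\mathcal G$ on the binary rooted tree; this has the same technical core.) The remaining ingredients --- the finite relator computations and the just-infinite argument --- are routine.
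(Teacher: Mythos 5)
First, a point of reference: the paper does not prove Theorem \ref{full-sub1} at all --- it is quoted from Matte Bon \cite{M-B} as background --- so there is no internal proof to measure yours against. Matte Bon's argument runs through the Schreier dynamical system: the action of the four elements on $\Om$ is conjugated to the action of $\mathcal{G}$ on (an extension of) the boundary of the binary tree, and faithfulness of that action gives the isomorphism. Your route (verify the Lysenok presentation, then use that $\mathcal{G}$ is just infinite together with infiniteness of the image) is genuinely different, and its frame is sound: the commutation relations you list are correct, the two base relators are indeed checkable on finitely many cylinders, and ``just infinite plus infinite image implies isomorphism'' is a valid endgame.

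The gap is the object carrying all the weight, the endomorphism $\Sigma$, and it is not a deferred technicality: the construction you propose (conjugate by $\hat\si$, extend by the identity on $\Om\setminus Y$) provably does \emph{not} satisfy the formulas you need. Concretely, take $g_4=\de_{[.d]}\de_{[.b]}$ and the nonempty cylinder $[.cab]$ (the word $cab$ occurs as $\xi_2\xi_3\xi_4$). For $\om\in[.cab]$ the letter $\om_3=b$ can only be a one-letter block $\si(c)$, which forces $\om_0\om_1\om_2=aca$ to be a single block $\si(a)$; the dot then lies strictly inside that block, so $[.cab]\cap Y=\emptyset$ and your $\Sigma(g_4)$ is the identity there, while $g_3=\de_{[.c]}\de_{[.d]}$ acts as $T$ on all of $[.cab]\subset[.c]$. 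Hence $\Sigma(g_4)\ne g_3$, and likewise $\Sigma(g_3)\ne g_2$ and $\Sigma(g_1)\ne g_1g_3g_1$ (a quick obstruction: the supports of $g_3$, $g_2$, and $g_1g_3g_1$ have invariant measure $5/7$, $6/7$, and $5/7$, whereas any identity-extension is supported in $Y$ with $\mu(Y)=1/2$; only $\Sigma(g_2)=g_4$, of support measure $3/7$, survives). This failure is structural, not notational: it mirrors the wreath recursion $b=(a,c)$, $c=(a,d)$, $d=(1,b)$ of $\mathcal{G}$, in which only the generator $d$ restricts to the identity on the complementary subtree; the correct $\Sigma$ must act nontrivially on $\Om\setminus Y$ in a generator-dependent way, and proving that such a recipe still defines a homomorphism of $G$ into itself is essentially the whole theorem. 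The same defect takes down step (ii) as written, since your proof that $G$ is infinite rests on every element of $\Sigma(G)$ fixing $\Om\setminus Y$ pointwise. To complete the argument you would need either to build the corrected $\Sigma$ honestly or to follow Matte Bon and conjugate the action of $G$ on $\Om$ directly to the boundary action of $\mathcal{G}$.
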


Theorem \ref{full6} implies that the commutator group of $[[T]]$ is 
finitely generated.  The main result of this paper is that the entire group 
$[[T]]$ is finitely generated.  Moreover, we provide an explicit generating 
set.

\begin{theorem}\label{main1}
The topological full group of the substitution subshift $T$ is generated by
transformations $T$, $\de_{[.b]}$, $\de_{[.d]}$, and $\de_{[.acacac]}$.
\end{theorem}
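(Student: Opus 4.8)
My plan is to start from Matui's theorem (Theorem~\ref{full3}) and progressively cut the generating set down to the four listed transformations, running the argument as an induction on word length. Write $G=\langle T,\de_{[.b]},\de_{[.d]},\de_{[.acacac]}\rangle$. First I would record the elementary reductions: if $U$ is clopen with $U\cap T(U)=\emptyset$ and $U=C_1\sqcup\dots\sqcup C_r$ is a splitting into cylinders, then automatically $C_i\cap T(C_j)=\emptyset$ for all $i,j$, so $\de_U=\de_{C_1}\cdots\de_{C_r}$ is a product of commuting involutions; and $\de_{[u.w]}=T^{|u|}\de_{[.uw]}T^{-|u|}$. Hence Theorem~\ref{full3} reduces the whole problem to showing $\de_{[.v]}\in G$ for every word $v$ occurring in $\xi$, and I would prove this by induction on $|v|$.

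The induction rests on two mechanisms. The first is a ``splitting calculus'' internal to $[[T]]$: for any admissible word $x$ one has $\de_{[.x]}=\prod_e\de_{[.xe]}$, a commuting product over the admissible right one-letter extensions $e$, and symmetrically $\de_{[.x]}=\prod_e T\de_{[.ex]}T^{-1}$ over the admissible left extensions; the factors have disjoint supports because $\xi$ has no double letters. Two consequences: appending or prepending a \emph{forced} letter does not affect whether $\de_{[.x]}\in G$, and if all but one member of a sibling family $\{\de_{[.xe]}\}_e$ lies in $G$, then so does the last. The second mechanism is the substitution itself. By recognisability of $\si$ (Moss\'{e}; see \cite{Vor}), the set $\Om^{(0)}\subset\Om$ of configurations whose origin falls on a boundary of the canonical decomposition into $\si$-blocks ($aca$, $b$, $c$, $d$) is clopen, $\si\colon\Om\to\Om^{(0)}$ is a homeomorphism, and — since the block lengths $|\si(a)|=3$ and $|\si(b)|=|\si(c)|=|\si(d)|=1$ are locally readable — conjugation by $\si$ followed by extension by the identity off $\Om^{(0)}$ defines an injective endomorphism $\Sigma$ of $[[T]]$. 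A direct computation gives $\si([.v])=[.\si(v)]\cap\Om^{(0)}$, so $\Sigma(\de_{[.v]})$ agrees with $\de_{[.\si(v)]}$ except on a union of boundedly many cylinders, the discrepancy arising only when $\si(v)$ begins with $a$ or $c$. In particular $\Sigma(\de_{[.b]})=\de_{[.d]}$, $\Sigma(\de_{[.c]})=\de_{[.b]}$, and — the reason $acacac$ appears in the generating set — $\Sigma(\de_{[.d]})=T^{3}\de_{[.acacac]}T^{-3}$; thus $\de_{[.acacac]}$ is exactly the image of $\de_{[.d]}$ under one application of $\Sigma$, a piece of information that the group operations alone cannot manufacture.

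With these tools, the inductive step runs as follows. The finitely many short words are handled by hand, the pivotal case being $\de_{[.c]}$, which I would extract from the relations $\de_{[.acacac]}=T^{-3}\Sigma(\de_{[.d]})T^{3}$, $\Sigma(\de_{[.b]})=\de_{[.d]}$, $\Sigma(\de_{[.c]})=\de_{[.b]}$ by tracing cylinders through the block structure; then $\de_{[.a]}=\de_{[.ab]}\de_{[.ac]}\de_{[.ad]}$ and the other short $\de$'s follow from the splitting calculus. For a long admissible word $w$, recognisability lets me write $w=p\,\si(v)\,s$ with $|p|,|s|$ bounded by the recognisability constant and $|v|<|w|$; the splitting calculus reduces $\de_{[.w]}$ to $\de_{[.\si(v)]}$ and $\de$'s of strictly shorter words, the $\Sigma$-relation (with more splitting to absorb the correction cylinders) reduces $\de_{[.\si(v)]}$ to $\Sigma(\de_{[.v]})$ and $\de$'s of strictly shorter words, and the inductive hypothesis gives $\de_{[.v]}\in G$. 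The step that closes the loop — and the part I expect to be hardest — is to show $\Sigma(\de_{[.v]})\in G$ whenever $\de_{[.v]}\in G$, i.e.\ that $\Sigma$ carries a convenient generating set of $G$ back into $G$; clustered around it is the combinatorial bookkeeping (fixing the recognisability constant, identifying precisely which cylinders occur as corrections and as the prefixes and suffixes $p,s$, and checking that every auxiliary $\de$ produced genuinely carries a strictly shorter word or is one of the finitely many base cases). The rest should be routine manipulation inside $[[T]]$.
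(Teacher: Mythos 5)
Your overall frame---start from Theorem \ref{full3}, reduce every $\de_U$ to $\de$'s of cylinders, and induct on the length of the defining word---matches the paper's strategy, and your computations $\Sigma(\de_{[.b]})=\de_{[.d]}$, $\Sigma(\de_{[.c]})=\de_{[.b]}$, $\Sigma(\de_{[.d]})=T^{3}\de_{[.acacac]}T^{-3}$ are correct and do explain why $\de_{[.acacac]}$ belongs in the generating set. But the argument has a genuine gap exactly where you flag it, and that gap is not a technicality: the step ``$\de_{[.v]}\in G$ implies $\Sigma(\de_{[.v]})\in G$'' requires $\Sigma$ to carry a generating set of $G$ into $G$, and of the four generators only $\de_{[.b]}$ and $\de_{[.d]}$ have $\Sigma$-images you can identify inside $G$. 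The images $\Sigma(T)$ (which is $T^{3}$ on $\si([.a])$, $T$ on $\si([.b]\cup[.c]\cup[.d])$, and the identity off $\si(\Om)$) and $\Sigma(\de_{[.acacac]})=\de_{\si([.acacac])}$ are elements of $[[T]]$ with no evident expression in $T$, $\de_{[.b]}$, $\de_{[.d]}$, $\de_{[.acacac]}$; exhibiting such expressions is a problem of the same nature and difficulty as the theorem itself, so as written the induction is circular. A second, smaller defect is the claim that the splitting calculus reduces $\de_{[.w]}$, $w=p\,\si(v)\,s$, to $\de_{[.\si(v)]}$ plus $\de$'s of \emph{strictly shorter} words: the identity $\de_{[.x]}=\prod_e\de_{[.xe]}$ trades one cylinder for its siblings, which have the \emph{same} dimension, so each sibling family needs all but one of its members produced by some independent mechanism before the product relation yields the last one. (Also, the appeal to Moss\'e's recognizability theorem is not literally available, since $\si$ is not primitive---$b,c,d$ cycle among themselves and never produce $a$---though recognizability does hold here and can be extracted from the combinatorics of $\xi$.)

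For comparison, the paper resolves precisely these two points by explicit combinatorics rather than by an endomorphism. Lemma \ref{cyl2} shows every nonempty cylinder of dimension $2^n$ is a $T$-translate of one of the three cylinders $[.w_nb]$, $[.w_nc]$, $[.w_nd]$, which collapses the ``all siblings'' problem to three elements per level; Lemmas \ref{cyl-st1}--\ref{cyl-st3} then give one of the three level-$(n{+}1)$ generators as a translate of a level-$n$ one, a second via the commutator identity of Lemma \ref{psi-gen4} applied to the order-$3$ elements $\tau$ (the auxiliary groups $H_n$), and the third from the product relation $\de_{[.w_nl_n]}=\de_{[.w_{n+1}b]}\de_{[.w_{n+1}c]}\de_{[.w_{n+1}d]}$. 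Your $\Sigma$ is an attractive idea and plays the role of the translation step, but without a substitute for the commutator mechanism---or a proof that $\Sigma(G)\subseteq G$---the induction does not close.
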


Note that $\de_{[.a]}$ and $\de_{[.c]}$ are not on the list of generators.
It turns out that
\begin{eqnarray*}
\de_{[.a]}&=&T^{-1}\de_{[.b]}\de_{[.d]}T^{-2}\de_{[.b]}\de_{[.d]}T^3
\de_{[.acacac]}T^2\de_{[.acacac]}T^{-2},\\
\de_{[.c]}&=&T^{-2}\de_{[.b]}\de_{[.d]}T^3\de_{[.acacac]}T^2
\de_{[.acacac]}T^{-3}
\end{eqnarray*}
(see Section \ref{tfg} for details).

The paper is organized as follows.  In Section \ref{seq} we obtain very 
detailed information on clopen subsets of the Cantor set $\Om$.  In Section 
\ref{tfg-gen} we derive some general properties of topological full groups 
(slightly generalizing \cite{Mat}).  In Section \ref{tfg} the results of 
Sections \ref{seq} and \ref{tfg-gen} are applied to prove Theorem 
\ref{main1}.  The proof is loosely modeled upon the proof of Theorem 
\ref{full6} in \cite{Mat}.

\section{Combinatorics of the substitution subshift}\label{seq}

First we are going to establish some properties of the infinite word
$\xi=\xi_1\xi_2\xi_3\dots$ fixed by the Lysenok substitution $\si$.

For any integer $n\ge1$ let $w_n=\si^{n-1}(a)$.  For example, $w_1=a$,
$w_2=aca$, $w_3=acabaca$, $w_4=acabacadacabaca$,
$w_5=acabacadacabacacacabacadacabaca$.  Since the word $w_1=a$ is
a proper beginning of the word $w_2=aca$, it follows by induction that each
$w_n$ is a proper beginning of $w_{n+1}$.  Consequently, there exists a
unique infinite word $\xi\in\cA^{\bN}$ such that each $w_n$ is a beginning
of $\xi$.  It is easy to see that $\xi$ is the only infinite word fixed by
$\si$.

For any integer $n\ge1$ let $l_n=\si^{n-1}(c)$.  Then $l_n=c$ if $n$ leaves
remainder $1$ under division by $3$, $l_n=b$ if $n$ leaves remainder $2$
under division by $3$, and $l_n=d$ if $n$ is divisible by $3$.

\begin{lemma}\label{seq1}
The word $w_n$ has length $2^n-1$ and $w_{n+1}=w_nl_nw_n$ for all $n\ge1$.
\end{lemma}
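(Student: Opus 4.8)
The plan is to prove both assertions simultaneously by induction on $n$, exploiting the fact that $\si$ is a homomorphism of the free monoid $\cA^*$. The base case $n=1$ is immediate: $w_1=a$ has length $1=2^1-1$, and $w_2=\si(w_1)=\si(a)=aca=w_1\,l_1\,w_1$ since $l_1=\si^0(c)=c$. For the inductive step, assume $w_n$ has length $2^n-1$ and that $w_{n+1}=w_n\,l_n\,w_n$. Applying $\si$ to the recurrence and using multiplicativity of $\si$ gives
\[
w_{n+2}=\si(w_{n+1})=\si(w_n\,l_n\,w_n)=\si(w_n)\,\si(l_n)\,\si(w_n)=w_{n+1}\,l_{n+1}\,w_{n+1},
\]
where we use $\si(w_n)=\si(\si^{n-1}(a))=\si^n(a)=w_{n+1}$ and similarly $\si(l_n)=\si(\si^{n-1}(c))=\si^n(c)=l_{n+1}$. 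This establishes the recurrence for all $n$. The length claim then follows: if $|w_n|=2^n-1$, then since each letter $l_n\in\{b,c,d\}$ has length $1$, the recurrence gives $|w_{n+1}|=2|w_n|+1=2(2^n-1)+1=2^{n+1}-1$.

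One small gap to address is that the inductive step above needs the recurrence $w_{n+1}=w_nl_nw_n$ to \emph{already hold at level} $n$ in order to apply $\si$ and get it at level $n+1$ — so the induction should carry the recurrence as part of the hypothesis, with the length statement following as a consequence at each stage. Alternatively, one can first prove the recurrence for all $n$ as a standalone induction (base case $n=1$ as above, step as above), and then deduce the length formula by a separate trivial induction using $|l_n|=1$. Either organization works; I would likely fold everything into a single induction with hypothesis ``$|w_n|=2^n-1$ and $w_{n+1}=w_nl_nw_n$.''

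There is essentially no hard part here — the only thing to be careful about is the bookkeeping of indices in the identities $\si^n(a)=w_{n+1}$ and $\si^n(c)=l_{n+1}$, which come directly from the definitions $w_n=\si^{n-1}(a)$ and $l_n=\si^{n-1}(c)$, together with the observation that $|l_n|=1$ for every $n$ because $\si$ sends each of $b,c,d$ to a single letter and hence $l_n$, being $\si^{n-1}$ applied to $c$, cycles through $c,b,d,c,b,d,\dots$. Since $\si$ is length-nondecreasing only on the letter $a$, it is worth noting explicitly that the words $w_n$ never ``collapse'': $w_n$ begins with $a$ for all $n$, so $\si$ does act nontrivially, and the length doubling is genuine. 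With these remarks in place the proof is a routine two-line induction.
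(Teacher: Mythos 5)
Your proof is correct and uses essentially the same idea as the paper: the recurrence comes from the homomorphism property of $\si$ together with $\si(a)=aca$, and the length formula follows by induction since $|l_n|=1$. The only (immaterial) difference is organizational — the paper obtains $w_{n+1}=w_nl_nw_n$ directly for every $n$ by writing $\si^n(a)=\si^{n-1}(aca)$, with no induction needed for that part, whereas you derive it inductively by applying $\si$ to the previous instance of the recurrence.
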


\begin{proof}
For any $n\ge1$ we obtain that $w_{n+1}=\si^n(a)=\si^{n-1}(\si(a))
=\si^{n-1}(aca)=\si^{n-1}(a)\si^{n-1}(c)\si^{n-1}(a)=w_nl_nw_n$.  Since the
word $w_1=a$ has length $1=2^1-1$, $l_n$ is always a single letter, and
$(2^n-1)+1+(2^n-1)=2^{n+1}-1$, it follows by induction that the length of
$w_n$ is $2^n-1$ for all $n\ge1$.
\end{proof}

\begin{lemma}\label{seq2}
Given an integer $N\ge1$, let $N=2^nK$, where $n\ge0$ and $K$ is odd.  Then
$\xi_N=a$ if $n=0$ and $\xi_N=l_n$ if $n\ge1$.
\end{lemma}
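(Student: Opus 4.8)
The plan is to prove this by induction on $N$, exploiting the self-similar structure $w_{n+1}=w_n l_n w_n$ from Lemma \ref{seq1}. First I would unwind the statement: writing $N=2^n K$ with $K$ odd, the claim is that the $N$-th letter of $\xi$ depends only on the $2$-adic valuation $n$ of $N$, being $a$ when $n=0$ (i.e.\ $N$ odd) and $l_n$ otherwise. The base cases $N=1$ (so $n=0$, $\xi_1=a$) and $N=2$ (so $n=1$, $\xi_2=c=l_1$) are immediate from $w_2=aca$.

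The inductive step proceeds by locating the position $N$ inside a sufficiently long prefix $w_{m}$ of $\xi$: choose $m$ with $2^m-1\ge N$. Using $w_{m}=w_{m-1}\,l_{m-1}\,w_{m-1}$ and the fact (Lemma \ref{seq1}) that $|w_{m-1}|=2^{m-1}-1$, the position $N$ falls into one of three regions. If $N\le 2^{m-1}-1$, then $\xi_N$ is the $N$-th letter of $w_{m-1}$, hence of $\xi$ again, and the $2$-adic valuation of $N$ is unchanged, so this case is handled by a smaller instance of the same statement (here it is cleanest to phrase the induction as: for all $N$ with $1\le N\le 2^m-1$, the formula holds — and induct on $m$). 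If $N=2^{m-1}$, the letter is exactly the middle letter $l_{m-1}$, and indeed $N=2^{m-1}$ has valuation $m-1$ with odd part $K=1$, matching $\xi_N=l_{m-1}$. If $N\ge 2^{m-1}+1$, write $N=(2^{m-1}-1)+1+N'$ with $1\le N'\le 2^{m-1}-1$; then $\xi_N$ is the $N'$-th letter of the second copy of $w_{m-1}$, which equals $\xi_{N'}$, and one checks $N'=N-2^{m-1}$. The key arithmetic observation to verify here is that for $2^{m-1}<N<2^m$ the $2$-adic valuations of $N$ and of $N-2^{m-1}$ coincide: this holds because $N-2^{m-1}$ is obtained from $N$ by clearing the leading bit, and since $N$ has a $1$ in a strictly lower position (as $N>2^{m-1}$, wait — one must be slightly careful), precisely, $N<2^m$ forces the binary expansion of $N$ to have its top bit at position $m-1$, so $N-2^{m-1}$ has the same low-order bits and thus the same valuation; one only needs $N\ne 2^{m-1}$, which is the case just treated separately. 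Then $v_2(N')=v_2(N)$ and $l_n$ is determined by $n\bmod 3$, so the formula for $\xi_{N'}$ transfers verbatim to $\xi_N$.

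The main obstacle — really the only subtle point — is the bookkeeping in the third case: correctly identifying which letter of which copy of $w_{m-1}$ the index $N$ names, and confirming that subtracting $2^{m-1}$ preserves the $2$-adic valuation (and hence the residue $n\bmod 3$ that picks out $l_n$ among $\{b,c,d\}$). Everything else is a routine induction on $m$ using Lemma \ref{seq1}, together with the elementary fact that a prefix of a prefix of $\xi$ is a prefix of $\xi$, so that ``the $j$-th letter of $w_k$'' is unambiguously $\xi_j$ whenever $j\le 2^k-1$.
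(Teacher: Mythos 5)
Your proposal is correct, and it takes a genuinely different route from the paper. The paper argues globally: it first shows that letters $a$ occupy exactly the odd positions by exhibiting a $\si$-invariant set $S$ of words of the form $ar_1ar_2\dots ar_M$ (using $\si(ab)=acad$, $\si(ac)=acab$, $\si(ad)=acac$), and then, for each fixed $n$, applies $\si^{n+1}$ to the whole of $\xi$ to write $\xi=w_nl_nw_ns'_1w_nl_nw_ns'_2\dots$ with each $s'_i$ a single letter; this displays $\xi_N=l_n$ simultaneously for all $N=2^nK$ with $K$ odd. You instead induct on the prefix $w_m$ via the recursion $w_m=w_{m-1}l_{m-1}w_{m-1}$, splitting the index $N$ into the first copy, the middle letter, and the second copy, and you carry the statement across the second copy by observing that $v_2(N-2^{m-1})=v_2(N)$ whenever $2^{m-1}<N<2^m$. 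Your key arithmetic step is right: since $1\le N-2^{m-1}\le 2^{m-1}-1$ forces $v_2(N-2^{m-1})<m-1$, subtracting $2^{m-1}$ cannot change the $2$-adic valuation. What your approach buys is economy of hypotheses: you use only that each $w_m$ is a prefix of $\xi$ and the recursion from Lemma \ref{seq1}, never invoking the substitution-invariance of $\xi$ or the auxiliary set $S$. What the paper's approach buys is that the decomposition $\xi=w_nl_nw_ns'_1w_nl_nw_ns'_2\dots$ is reused almost verbatim in the proof of Lemma \ref{seq4}, so the invariant-set argument is not wasted work there.
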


\begin{proof}
Let $S$ be the set of all words of the form $ar_1ar_2\dots ar_M$, where
each $r_i\in\{b,c,d\}$.  Since $\si(ab)=acad$, $\si(ac)=acab$, and
$\si(ad)=acac$, it follows that the set $S$ is invariant under the action
of the substitution $\si$.  Clearly, $ac\in S$.  Then $w_ml_m=\si^{m-1}(a)
\si^{m-1}(c)=\si^{m-1}(ac)$ is in $S$ for all $m\ge1$.  Since any beginning
of the infinite word $\xi$ is also a beginning of some $w_m$, we obtain
that $\xi=as_1as_2\dots$, where each $s_i\in\{b,c,d\}$.  In particular,
$\xi_N=a$ if and only if $N$ is odd.

Since the infinite word $\xi$ is fixed by the substitution $\si$, it 
follows that for any given $n\ge1$,
$$
\xi=\si^{n+1}(\xi)=\si^{n+1}(a)\si^{n+1}(s_1)\si^{n+1}(a)\si^{n+1}(s_2)
\ldots =w_{n+1}s'_1w_{n+1}s'_2\dots,
$$
where $s'_i=\si^{n+1}(s_i)$, $i=1,2,\dots$  Note that each $s'_i$ is a 
single letter from $\{b,c,d\}$.  By Lemma \ref{seq1}, $w_{n+1}=w_nl_nw_n$ 
and the length of $w_n$ is $2^n-1$.  Since $\xi=w_nl_nw_ns'_1w_nl_nw_ns'_2 
\dots$, we obtain that $\xi_N=l_n$ for $N=2^n,3\cdot2^n,5\cdot2^n,\dots$
That is, $\xi_N=l_n$ if $N=2^nK$, where $K$ is odd.
\end{proof}

\begin{lemma}\label{seq3}
$\si(\xi_{2N+1}\xi_{2N+2}\dots\xi_{2N+2M})=\xi_{4N+1}\xi_{4N+2}\dots
\xi_{4N+4M}$ for all $N\ge0$ and $M\ge1$.  Moreover, if
$\si(w)=\xi_{4N+1}\xi_{4N+2}\dots\xi_{4N+4M}$ for some $w$, then
$w=\xi_{2N+1}\xi_{2N+2}\dots\xi_{2N+2M}$.
\end{lemma}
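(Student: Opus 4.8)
The statement has two parts: a ``forward'' assertion that $\sigma$ maps a block $\xi_{2N+1}\cdots\xi_{2N+2M}$ of $\xi$ to the corresponding block $\xi_{4N+1}\cdots\xi_{4N+4M}$ (the one occupying the ``doubled'' position), and an ``injectivity'' assertion that the latter block of $\xi$ has a $\sigma$-preimage only of that special form. Let me think about how to prove each.

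For the forward part, I would exploit the decomposition $\xi = as_1as_2as_3\cdots$ with $s_i\in\{b,c,d\}$ established inside the proof of Lemma \ref{seq2}. Thus the letter in position $2k-1$ is always $a$, and the letter in position $2k$ is $s_k$. Applying $\sigma$ to a block that starts at an even position $2N+1$ (so it starts with the letter $a = \xi_{2N+1}$) and has even length $2M$: the block is $a s_{N+1} a s_{N+2}\cdots a s_{N+M}$, and $\sigma$ sends $a\mapsto aca$, $s_i\mapsto$ a single letter, so the image has length $3M+M=4M$. Since $\xi=\sigma(\xi)=\sigma(as_1a s_2\cdots) = (aca)\,\sigma(s_1)\,(aca)\,\sigma(s_2)\cdots$, the image of the sub-block $a s_{N+1}\cdots a s_{N+M}$ is exactly the segment of $\xi$ starting right after $\sigma(as_1\cdots a s_N)$, whose length is $4N$. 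Hence the image is $\xi_{4N+1}\cdots\xi_{4N+4M}$. This is really a bookkeeping argument about where blocks land under $\sigma$, using that $\sigma$ multiplies the length of every block of the form $a r$ (with $r\in\{b,c,d\}$) by exactly $4$.

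\medbreak

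For the injectivity part, suppose $\sigma(w) = \xi_{4N+1}\cdots\xi_{4N+4M}$. First I need to know what kind of word $w$ can be: since the target is a subword of $\xi$ and hence (by the $S$-invariance argument in Lemma \ref{seq2}, applied to a long enough window) decomposes into pieces $\sigma(a)=aca$, $\sigma(b)=d$, $\sigma(c)=b$, $\sigma(d)=c$, I want to show $w$ must itself be a word over $\cA$ reading off those pieces, i.e. the factorization of the target into $\sigma$-images of letters is unique. The key point is that $\sigma(a)=aca$ is the only image of length $3$, and it is the only image containing the letter $a$; the images $d$, $b$, $c$ of $b$, $c$, $d$ are single letters $\ne a$. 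So, scanning $\xi_{4N+1}\cdots\xi_{4N+4M}$ left to right: wherever an $a$ appears it must be the first letter of a block $aca$ (it cannot be the middle $c$-slot since that slot holds $c$, not $a$; and two $a$'s are never adjacent, so the parsing is forced), and every maximal run between consecutive $a$-blocks consists of single letters each of which is $\sigma$ of a unique letter. Because the target starts at position $4N+1$, which is $\equiv 1 \pmod 4$, Lemma \ref{seq2} tells us $\xi_{4N+1}=a$, so the target begins with a full block $aca$ — the parse is unambiguous from the start. This forces $w$ to be a genuine word in $\cA^*$ with $\sigma(w)$ equal to the target, and then $w$ is uniquely determined: $w=w'$ where $w' = \xi_{2N+1}\cdots\xi_{2N+2M}$, since $\sigma(w')$ equals the target by the forward part and $\sigma$ is injective on $\cA^*$ (as it is injective on $\cA$ and the parsing is unique).

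\medbreak

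The main obstacle is the uniqueness of the parsing in the injectivity part — i.e.\ making rigorous that any occurrence of $\xi_{4N+1}\cdots\xi_{4N+4M}$ inside $\xi$ is cut by the $\sigma$-blocks in only one way, starting cleanly at the left end. The clean-start issue is handled by $4N+1\equiv 1\pmod 4$ forcing $\xi_{4N+1}=a$ (Lemma \ref{seq2}) together with the fact, from the proof of Lemma \ref{seq2}, that under $\sigma$ every $a$ in $\xi$ arises as the leading letter of a block $aca$ and there are no double $a$'s; I would state this as the recognizability property $\xi=\sigma(a)\sigma(s_1)\sigma(a)\sigma(s_2)\cdots$ and note that the only way to read $aca\,r$ (with $r\in\{b,c,d\}$) as a concatenation of $\sigma$-images of letters is $\sigma(a)\sigma(\sigma^{-1}(r))$. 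Everything else is routine bookkeeping with lengths ($|\sigma(a)|=3$, $|\sigma(x)|=1$ for $x\ne a$, so $|\sigma(u)| = 4\cdot(\text{number of }a\text{'s in }u)$ when $u$ alternates) and positions in $\xi$.
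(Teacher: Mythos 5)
Your forward argument is essentially the paper's: decompose $\xi=as_1as_2\dots$ with $s_i\in\{b,c,d\}$ (from Lemma \ref{seq2}), observe that $\si$ multiplies the length of each block $ar$ by $4$, so $\si(\xi_1\dots\xi_{2K})=\xi_1\dots\xi_{4K}$, and subtract the prefix of length $2N$; the only slip is calling $2N+1$ an ``even position'' when you clearly mean an odd one. For the second half you and the paper genuinely diverge. The paper simply proves that $\si$ is injective on all of $\cA^*$, using only that the four images $aca$, $d$, $b$, $c$ are pairwise distinct and none is a prefix of another (a longest-common-prefix argument); combined with the forward half, which already exhibits one preimage of the target, this finishes the proof without ever looking at $\xi$ again. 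You instead run a recognizability argument: parse $\xi_{4N+1}\dots\xi_{4N+4M}$ into $\si$-blocks by scanning left to right, using $\xi_{4N+1}=a$ and the parity structure of $\xi$ to force the parse at each step. This does work --- the forced parse determines $w$ uniquely, and the forward half identifies it --- but it is more machinery than needed and is tied to the specific word $\xi$. Be aware that your intermediate claim that ``wherever an $a$ appears it must be the first letter of a block $aca$'' is not literally true (the third letter of $aca$ is also an $a$); what actually carries your argument is the left-to-right scan, which only ever asks whether the letter at the \emph{start of the next block} is $a$, and parity of the position answers that. The paper's prefix-code observation buys injectivity of $\si$ on arbitrary words for free and is the cleaner route.
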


\begin{proof}
For any $M\ge1$ the word $\xi_1\xi_2\dots\xi_{2M}$ is a beginning of the
infinite word $\xi$.  Since $\xi$ is invariant under the substitution
$\si$, the word $\si(\xi_1\xi_2\dots\xi_{2M})$ is another beginning of
$\xi$.  According to Lemma \ref{seq2}, $\xi_i=a$ if and only if $i$ is odd.
Hence the word $\xi_1\xi_2\dots\xi_{2M}$ contains $M$ letters $a$ and $M$
other letters.  Since $\si(a)=aca$ is a word of length $3$ while $\si(b)$,
$\si(c)$, and $\si(d)$ are single letters, the length of $\si(\xi_1\xi_2
\dots\xi_{2M})$ is $3M+M=4M$.  We conclude that $\si(\xi_1\xi_2\dots
\xi_{2M})=\xi_1\xi_2\dots\xi_{4M}$.  This proves the first statement of the
lemma in the case $N=0$.  In the case $N\ge1$, it follows from the above
that $\si(\xi_1\xi_2\dots\xi_{2N})=\xi_1\xi_2\dots\xi_{4N}$ and
$\si(\xi_1\xi_2\dots\xi_{2N+2M})=\xi_1\xi_2\dots\xi_{4N+4M}$.  Since
$$
\si(\xi_1\xi_2\dots\xi_{2N+2M})=\si(\xi_1\xi_2\dots\xi_{2N})\,
\si(\xi_{2N+1}\xi_{2N+2}\dots\xi_{2N+2M}),
$$
we obtain that $\si(\xi_{2N+1}\xi_{2N+2}\dots\xi_{2N+2M})=
\xi_{4N+1}\xi_{4N+2}\dots\xi_{4N+4M}$.

To prove the second statement of the lemma, it is enough to show that the
action of the substitution $\si$ on finite words is one-to-one, i.e.,
$\si(u_1)\ne\si(u_2)$ if $u_1\ne u_2$.  The reason is that neither of the
words $\si(a),\si(b),\si(c),\si(d)$ is a beginning of another (in
particular, neither is empty).  Let $u$ be the longest common beginning of
the words $u_1$ and $u_2$.  If $u=u_1$ then $u_2=u_1su'_2$ for some letter
$s$ and word $u'_2$.  Since $\si(s)$ is not empty, we obtain $\si(u_2)=
\si(u_1)\si(s)\si(u'_2)\ne\si(u_1)$.  The case $u=u_2$ is treated
similarly.  Otherwise $u_1=us_1u'_1$ and $u_2=us_2u'_2$, where $s_1,s_2$
are distinct letters and $u'_1,u'_2$ are some words.  It is no loss to
assume that the word $\si(s_1)$ is not longer than $\si(s_2)$.  Since
$\si(s_1)$ is not a beginning of $\si(s_2)$, it follows that
$\si(u)\si(s_1)$ is not a beginning of $\si(u)\si(s_2)\si(u'_2)=\si(u_2)$.
Then $\si(u_1)=\si(u)\si(s_1)\si(u'_1)$ cannot be the same as $\si(u_2)$.
\end{proof}

We say that a word $w'$ is obtained from a word $w$ by a \emph{cyclic
permutation} of letters if there exist words $u_1$ and $u_2$ such that
$w=u_1u_2$ and $w'=u_2u_1$.

\begin{lemma}\label{seq4}
Any word of length $2^n$ that occurs as a subword in $\xi$ can be obtained 
from one of the words $w_nb$, $w_nc$, and $w_nd$ by a cyclic permutation of 
letters.
\end{lemma}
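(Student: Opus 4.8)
The plan is to exploit the self-similar block structure of the fixed word $\xi$: it is a concatenation of copies of $w_n$ glued together by single ``separator'' letters sitting at regularly spaced positions, so that every window of length $2^n$ straddles exactly one separator. Concretely, I would first record this decomposition. Iterating the identity $w_{m+1}=w_ml_mw_m$ of Lemma \ref{seq1}, one shows by induction on $m\ge n$ that $w_m$ is a concatenation of $2^{m-n}$ copies of $w_n$, any two consecutive copies being separated by a single letter, each such letter (being some $l_j$) lying in $\{b,c,d\}$. Since every $w_m$ is a beginning of $\xi$ and these decompositions are mutually compatible, $\xi$ itself has the form $\xi=w_nz_1w_nz_2w_nz_3\cdots$ with all $z_i\in\{b,c,d\}$; and because $|w_n|=2^n-1$, the separator $z_i$ occupies position $i\cdot2^n$, so the separators are precisely at the positions of $\xi$ divisible by $2^n$ (this last fact also follows directly from Lemma \ref{seq2}, since $2^n$ is even for $n\ge1$).

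Now consider an arbitrary occurrence of a word $v$ of length $2^n$, say $v=\xi_p\xi_{p+1}\cdots\xi_{p+2^n-1}$. Among the $2^n$ consecutive integers $p,p+1,\dots,p+2^n-1$ there is exactly one multiple of $2^n$, say $q$; put $z=\xi_q\in\{b,c,d\}$ and split $v=\al z\be$ at this separator, where $\al=\xi_p\cdots\xi_{q-1}$ and $\be=\xi_{q+1}\cdots\xi_{p+2^n-1}$ (either of which may be empty). Since $p$ lies in the copy of $w_n$ immediately to the left of position $q$, the word $\al$ is a suffix of $w_n$; likewise $\be$ is a beginning of $w_n$; and $|\al|+|\be|=2^n-1=|w_n|$. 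Hence $w_n=\be\al$ — its beginning of length $|\be|$ is $\be$ and the rest is $\al$ — so $w_nz=\be\cdot(\al z)$, and the cyclic permutation of $w_nz$ that carries the beginning $\be$ to the end gives $(\al z)\be=v$. Since $z\in\{b,c,d\}$, this exhibits $v$ as a cyclic permutation of one of $w_nb$, $w_nc$, $w_nd$.

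The only point needing a little care is the degenerate case in which $q$ is an endpoint of the window, so that $\al$ or $\be$ is empty; the argument is unchanged once the empty word is admitted as a beginning (and a suffix) of $w_n$. The substantive step here is the block decomposition of $\xi$ in the first paragraph — everything after it is elementary bookkeeping about where a length-$2^n$ window sits relative to the separators, and I do not anticipate any real obstacle there.
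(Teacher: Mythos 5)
Your proof is correct and takes essentially the same route as the paper's: both establish the block decomposition $\xi=w_nz_1w_nz_2\cdots$ with $z_i\in\{b,c,d\}$ (you by iterating $w_{m+1}=w_ml_mw_m$, the paper by applying powers of $\si$ to $\xi=as_1as_2\cdots$) and then observe that a length-$2^n$ window straddles exactly one separator, splitting as a suffix of $w_n$, a separator letter, and a prefix of $w_n$, with the suffix and prefix recombining to give $w_n$. No issues.
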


\begin{proof}
Since the infinite word $\xi=\xi_1\xi_2\dots$ is invariant under the 
substitution $\si$, it follows that $\xi=\si^n(\xi)=\si^n(\xi_1) 
\si^n(\xi_2)\dots$  Lemma \ref{seq2} implies that $\xi_N=a$ if and only if 
$N$ is odd.  Therefore $\xi=w_ns_1w_ns_2w_ns_3\dots$, where each 
$s_i\in\{b,c,d\}$.  By Lemma \ref{seq1}, the length of the word $w_n$ is 
$2^n-1$.  It follows that any subword of length $2^n$ in $\xi$ is of the 
form $w_-lw_+$, where $l\in\{b,c,d\}$, $w_+$ is a beginning of $w_n$, and 
$w_-$ is an ending of $w_n$.  Since the concatenated word $w_+w_-$ has the 
same length as $w_n$, it has to coincide with $w_n$.  Then the word 
$w_-lw_+$ can be obtained from $w_nl$ by a cyclic permutation of letters.
\end{proof}

It turns out that the representation of the infinite word $\xi$ as
$w_ns_1w_ns_2w_ns_3\dots$, where each $s_i\in\{b,c,d\}$, does not show all
occurrences of $w_n$ as a subword in $\xi$.  There are more occurrences,
they overlap with the shown ones.  As a result, it is not true that any
occurrence of $w_n$ in $\xi$ is immediately followed by $bw_n$, $cw_n$, or
$dw_n$.  For example, some occurrences of $w_2=aca$ are followed by $caba$.
The next three lemmas explain what can follow and what can precede a
particular occurrence of $w_n$.

\begin{lemma}\label{seq-st1}
Any occurrence of the word $w_nl_n$ in $\xi$ is immediately followed by
$w_nb$, $w_nc$, or $w_nd$ and, unless it is the beginning of $\xi$,
immediately preceded by $w_nb$, $w_nc$, or $w_nd$.
\end{lemma}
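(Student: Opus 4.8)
The plan is to exploit the self-similar structure $w_{n+1}=w_nl_nw_n$ (Lemma \ref{seq1}) together with the substitutive identity $\xi=\si(\xi)$, and to argue by induction on $n$. The base case $n=1$ is essentially the observation that $\xi=as_1as_2as_3\dots$ with each $s_i\in\{b,c,d\}$ (from the proof of Lemma \ref{seq2}): any occurrence of $w_1l_1=ac$ sits inside this pattern, so it is followed by $a s_j$ for some $j$ and, unless it starts $\xi$, preceded by $a s_i$, i.e.\ by $w_1b$, $w_1c$, or $w_1d$ on each side. One should also dispose of the smallest cases of $l_n$ explicitly, since the claim singles out $w_nl_n$ rather than an arbitrary occurrence of $w_n$.

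For the inductive step, suppose the statement holds for $n$ and consider an occurrence of $w_{n+1}l_{n+1}$ in $\xi$, starting at position $N+1$, say. The key point is that $\xi=\si(\xi)$ and, by Lemma \ref{seq3}, the substitution $\si$ acts injectively on words and respects the ``de-substitution'' of blocks: a beginning of $\xi$ of even length $2M$ maps to the beginning of length $4M$, and conversely the $\si$-preimage of $\xi_{4N'+1}\dots\xi_{4N'+4M}$ is uniquely $\xi_{2N'+1}\dots\xi_{2N'+2M}$. The plan is to show that any occurrence of $w_{n+1}l_{n+1}=\si^n(a)\si^n(c)=\si^n(ac)$ in $\xi$ is the $\si$-image of an occurrence of $ac$, hence (after one more de-substitution if needed, or directly) aligns with the block decomposition $\xi=w_{n+1}s'_1w_{n+1}s'_2\dots$ coming from $\xi=\si^{n+1}(\xi)$. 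Equivalently: because $\xi_i=a$ iff $i$ is odd, $\xi_i=l_1=c$ forces $i\equiv 2\pmod 4$ actually iff... — more carefully, one reads off from Lemma \ref{seq2} the ``valuation'' pattern, and the occurrences of the length-$2^{n+1}$ word $w_{n+1}l_{n+1}$ are exactly those starting at positions $N+1$ with $N$ divisible by $2^{n+1}$; this is the decomposition $\xi=w_{n+1}s_1w_{n+1}s_2\dots$ from the proof of Lemma \ref{seq4}. Once the occurrence is forced to be a ``visible'' block boundary, the letter $s_j$ immediately after it lies in $\{b,c,d\}$ and is followed by the next $w_{n+1}$, so $w_{n+1}l_{n+1}$ is followed by $w_{n+1}b$, $w_{n+1}c$, or $w_{n+1}d$; symmetrically on the left unless the occurrence starts at position $1$, which is the beginning of $\xi$.

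The step I expect to be the main obstacle is precisely the alignment claim: that an occurrence of $w_nl_n$ (as opposed to a bare $w_n$) must respect the block grid, i.e.\ start at a position $\equiv 1 \pmod{2^n}$. This is where the distinction flagged in the paragraph before Lemma \ref{seq-st1} bites — bare $w_n$ can occur at ``hidden'' offsets (e.g.\ $w_2=aca$ followed by $caba$), but the extra letter $l_n$ at the end of $w_nl_n$ should kill those offsets. I would prove this by another layer of induction or directly via Lemma \ref{seq2}: if $w_nl_n$ occurs starting at position $N+1$, then its final letter $\xi_{N+2^n}=l_n$ has $2$-valuation exactly $n$, which pins down $N+2^n$ modulo $2^{n+1}$, hence $N$ modulo $2^{n+1}$, hence $N$ is divisible by $2^n$; combined with $\xi_{N+1}=a$ (so $N+1$ odd, $N$ even) and the structural description $\xi=w_ns_1w_ns_2\dots$ this forces the occurrence onto a block boundary. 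Filling in the exact congruences is the routine part; the conceptual content is just that $l_n$ ``sees'' the $2$-adic scale $n$ while the interior of $w_n$ does not.
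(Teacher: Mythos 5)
Your overall plan --- induct on $n$ and reduce everything to the claim that an occurrence of $w_nl_n$ must sit on the block grid of the decomposition $\xi=w_ns_1w_ns_2\dots$ --- is workable, and you have correctly identified the alignment claim as the crux. But the argument you offer for that claim has a genuine gap. You assert that the final letter $\xi_{N+2^n}=l_n$ forces $N+2^n$ to have $2$-adic valuation \emph{exactly} $n$. That is false: by Lemma \ref{seq2} the letter at a position $M=2^mK$ ($K$ odd) is $l_m$, and $l_m$ depends only on $m$ modulo $3$, so $\xi_M=l_n$ only tells you that $m\equiv n\pmod 3$ and $m\ge1$. For instance $l_1=l_4=c$, and $\xi_{15}\xi_{16}=ac=w_1l_1$ is an occurrence whose last letter sits at position $16=2^4$. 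Since the valuation can be $n+3$, $n+6$, or (for $n\ge4$) even $n-3$, the final letter does not pin down $N+2^n$ modulo $2^{n+1}$, and for $n\ge4$ it does not even yield $2^n\mid N$; so the step you call routine does not close. Your closing remark that ``the interior of $w_n$ does not see the $2$-adic scale'' is exactly backwards: to kill the spurious valuations you must play the interior distinguished letters against the final one --- e.g.\ the central $l_{n-1}$ at position $N+2^{n-1}$ together with the $l_n$ at $N+2^n$, using that exactly one of two consecutive integers is odd. With that extra input the alignment claim is true and your deduction of the lemma from it is fine, but as written the key step fails. (Minor: the ``exactly'' in your alignment claim is also wrong in the converse direction, since e.g.\ $\xi_5\dots\xi_8=acad\ne acab$; you only need the forward implication.)

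For comparison, the paper's proof avoids the $2$-adic bookkeeping almost entirely. It uses only that every occurrence of $w_{k+1}l_{k+1}$ begins with $acab$, so $\xi_{N+4}=b$ forces $4\mid N+4$ --- here there is no mod-$3$ ambiguity, since $b=l_m$ only for $m\ge2$. Then Lemma \ref{seq3} (injectivity of $\si$ plus de-substitution of aligned blocks) converts the occurrence of $w_{k+1}l_{k+1}=\si(w_kl_k)$ at position $4N'+1$ into an occurrence of $w_kl_k$ at position $2N'+1$, where the inductive hypothesis applies, and then transports the neighbouring words back by applying $\si$. If you want to rescue your approach, that de-substitution step is the cleanest repair of the alignment claim.
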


\begin{proof}
The proof is by induction on $n$.  First consider the case $n=1$.  By Lemma
\ref{seq2}, $\xi_N=a$ if and only if $N$ is odd.  Therefore every occurrence
of $w_1l_1=ac$ in $\xi$ is immediately followed by $ab$, $ac$, or $ad$ and,
unless it is the beginning of $\xi$, immediately preceded by $ab$, $ac$, or
$ad$.

Now let $k\ge1$ and assume the lemma holds for $n=k$.  Suppose
$\xi_{N+1}\xi_{N+2}\dots\xi_{N+M}$ is an occurrence of the word
$w_{k+1}l_{k+1}$ in $\xi$.  The first four letters of $w_{k+1}l_{k+1}$ are
$acab$ so that $\xi_{N+4}=b$.  Lemma \ref{seq2} implies that $N+4$ is
divisible by $4$.  Besides, $M=2^{k+1}$ due to Lemma \ref{seq1}.  Hence $N$
and $M$ are both divisible by $4$, i.e., $N=4N'$ and $M=4M'$ for some
$M',N'\in\bZ$.  Since $\xi_{N+1}\xi_{N+2}\dots\xi_{N+M}=w_{k+1}l_{k+1}
=\si(w_kl_k)$, it follows from Lemma \ref{seq3} that
$\xi_{2N'+1}\xi_{2N'+2}\dots\xi_{2N'+2M'}$ is an occurrence of $w_kl_k$.  By
the inductive assumption, $\xi_{2N'+2M'+1}\xi_{2N'+2M'+2}\dots
\xi_{2N'+4M'}$ is an occurrence of $w_kb$, $w_kc$, or $w_kd$ and, unless
$N'=0$, we have $N'\ge M'$ and $\xi_{2N'-2M'+1}\xi_{2N'-2M'+2}\dots
\xi_{2N'}$ is also an occurrence of $w_kb$, $w_kc$, or $w_kd$.  Applying
Lemma \ref{seq3} two more times, we obtain that $\xi_{N+1}\xi_{N+2}\dots
\xi_{N+M}$ is immediately followed by $\si(w_kb)=w_{k+1}d$,
$\si(w_kc)=w_{k+1}b$, or $\si(w_kd)=w_{k+1}c$ and, unless $N=0$,
immediately preceded by one of the same three words.  This completes the
induction step.
\end{proof}

\begin{lemma}\label{seq-st2}
Any occurrence of the word $w_nl_{n+1}$ in $\xi$ is immediately followed by
$w_nl_n$ and immediately preceded by another $w_nl_n$.
\end{lemma}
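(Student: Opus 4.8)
The plan is to prove the statement by induction on $n$, in close parallel with the proof of Lemma \ref{seq-st1}. A preliminary remark makes the statement well posed: the word $w_nl_{n+1}$ cannot occur at the very beginning of $\xi$, because $\xi$ begins with $w_{n+1}=w_nl_nw_n$, hence with $w_nl_n$, and $l_{n+1}\ne l_n$ (consecutive values of $l_m$ cycle through $c,b,d$). So in the statement ``immediately preceded by'' always refers to a genuine block of $2^n$ letters, and the inductive hypothesis for $n=k$ may legitimately be applied without an ``unless at the start'' caveat.

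For the base case $n=1$ one has $w_1l_2=ab$ and $w_1l_1=ac$, and the claim follows directly from Lemma \ref{seq2}. Indeed, if $\xi_q=b$ then $q=2^mK$ with $K$ odd and $m\equiv2\pmod 3$, so $m\ge2$ and $4\mid q$; hence $q\pm1$ and $q-3$ are odd (so $\xi_{q+1}=\xi_{q-1}=\xi_{q-3}=a$), while $q\pm2$ equal twice an odd number (so $\xi_{q\pm2}=l_1=c$ by Lemma \ref{seq2}). Thus the occurrence $\xi_{q-1}\xi_q=ab$ is immediately followed by $\xi_{q+1}\xi_{q+2}=ac$ and immediately preceded by $\xi_{q-3}\xi_{q-2}=ac$.

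For the inductive step, assume the claim for $n=k$ and let $\xi_{N+1}\dots\xi_{N+M}$ be an occurrence of $w_{k+1}l_{k+2}$, with $M=2^{k+1}$ by Lemma \ref{seq1}. First I would check that $4\mid N$, exactly as in Lemma \ref{seq-st1}: the fourth letter of $w_{k+1}l_{k+2}$ is $d$ if $k=1$ (the word is $acad$) and $b$ if $k\ge2$ (then $w_{k+1}$ begins with $acab$), so $\xi_{N+4}\in\{b,d\}$ and Lemma \ref{seq2} forces $4\mid N+4$. Write $N=4N'$, $M=4M'$ with $M'=2^{k-1}$. The key identity driving the induction is $w_{k+1}l_{k+2}=\si^k(a)\si^k(b)=\si^k(ab)=\si\big(\si^{k-1}(ab)\big)=\si(w_kl_{k+1})$, using $l_{k+1}=\si^{k-1}(b)$ and $l_{k+2}=\si^k(b)$. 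By Lemma \ref{seq3} (the ``moreover'' part) this lets one desubstitute: $\xi_{2N'+1}\dots\xi_{2N'+2M'}$ is an occurrence of $w_kl_{k+1}$. By the inductive hypothesis it is immediately preceded and followed by copies of $w_kl_k$, i.e.\ $\xi_{2N'-2M'+1}\dots\xi_{2N'}=\xi_{2N'+2M'+1}\dots\xi_{2N'+4M'}=w_kl_k$ (so in particular $N'\ge M'$). Applying the forward part of Lemma \ref{seq3} to each of these two length-$2M'$ blocks and using $\si(w_kl_k)=\si^k(ac)=w_{k+1}l_{k+1}$, one gets $\xi_{N-M+1}\dots\xi_N=\xi_{N+M+1}\dots\xi_{N+2M}=w_{k+1}l_{k+1}$, which is precisely the assertion for $n=k+1$. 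This closes the induction.

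The one real subtlety — and the reason one cannot argue directly from the decomposition $\xi=w_ns_1w_ns_2\dots$ — is that $w_n$ has extra occurrences in $\xi$ overlapping the obvious ones, so what surrounds an occurrence of $w_nl_{n+1}$ is not visible at first glance; routing everything through the desubstitution of Lemma \ref{seq3} is what resolves this. Beyond that, the proof is bookkeeping: at each step one must verify the relevant indices and lengths are divisible by the right power of $2$ so that Lemma \ref{seq3} applies, and keep track of which letters $l_m$ appear (this is where the distinctness of $l_{n-1},l_n,l_{n+1}$ is used).
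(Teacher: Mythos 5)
Your proof is correct and follows essentially the same route as the paper's: induction on $n$, with the base case read off from Lemma \ref{seq2} and the inductive step carried out by desubstituting via Lemma \ref{seq3} after checking the divisibility of the starting index. The only (welcome) addition is your explicit preliminary remark that $w_nl_{n+1}$ cannot occur at the beginning of $\xi$, which the paper leaves implicit.
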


\begin{proof}
The proof is by induction on $n$.  First consider the case $n=1$.  By Lemma
\ref{seq2}, $\xi_N=a$ if $N$ is odd and $\xi_N=c$ if $N$ is even while not
divisible by $4$.  It follows that every occurrence of $b$ or $d$ in $\xi$
is immediately followed and immediately preceded by $aca$.  Therefore every
occurrence of $w_1l_2=ab$ is immediately followed and preceded by
$ac=w_1l_1$.

Now let $k\ge1$ and assume the lemma holds for $n=k$.  Suppose
$\xi_{N+1}\xi_{N+2}\dots\xi_{N+M}$ is an occurrence of the word
$w_{k+1}l_{k+2}$ in $\xi$.  The first four letters of $w_{k+1}l_{k+2}$ are
$acab$ (if $k>1$) or $acad$ (if $k=1$).  In either case, Lemma \ref{seq2}
implies that $N+4$ is divisible by $4$.  Besides, $M=2^{k+1}$ due to Lemma
\ref{seq1}.  Hence $N$ and $M$ are both divisible by $4$, i.e., $N=4N'$ and
$M=4M'$ for some $M',N'\in\bZ$.  Since $\xi_{N+1}\xi_{N+2}\dots\xi_{N+M}
=w_{k+1}l_{k+2}=\si(w_kl_{k+1})$, it follows from Lemma \ref{seq3} that
$\xi_{2N'+1}\xi_{2N'+2}\dots\xi_{2N'+2M'}$ is an occurrence of $w_kl_{k+1}$.
By the inductive assumption, $N'\ge M'$ and
$$
\xi_{2N'+2M'+1}\xi_{2N'+2M'+2}\dots\xi_{2N'+4M'}
=\xi_{2N'-2M'+1}\xi_{2N'-2M'+2}\dots\xi_{2N'}=w_kl_k.
$$
Applying Lemma \ref{seq3} two more times, we obtain that
$\xi_{N+1}\xi_{N+2}\dots\xi_{N+M}$ is immediately followed and immediately
preceded by $\si(w_kl_k)=w_{k+1}l_{k+1}$.  This completes the induction
step.
\end{proof}

\begin{lemma}\label{seq-st3}
Any occurrence of the word $w_{n+1}l_n=w_nl_nw_nl_n$ in $\xi$ is 
immediately followed and immediately preceded by the same word of length 
$2^{n+1}$, which can be either $w_nl_nw_nl_{n+1}$ or $w_nl_{n+1}w_nl_n$.
\end{lemma}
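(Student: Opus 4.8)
The plan is to imitate the inductive strategy of Lemmas \ref{seq-st1} and \ref{seq-st2}, using the substitution $\si$ to descend from $w_{n+1}l_n$ to a shorter marked word. First I would verify the identity $w_{n+1}l_n = w_nl_nw_nl_n$, which is immediate from Lemma \ref{seq1}. The key observation is that $w_{n+1}l_n = \si(w_nl_{n-1})$ for $n\ge2$, since $\si(w_n)=w_{n+1}$ and $\si(l_{n-1})=l_n$ (because $\si$ cyclically permutes $\{b,c,d\}$ as $b\to d\to c\to b$, and $l_j$ depends only on $j \bmod 3$, with $\si(l_j)=l_{j+1}$). So an occurrence of $w_{n+1}l_n$ at a position divisible by $4$ pulls back, via Lemma \ref{seq3}, to an occurrence of $w_nl_{n-1}$ in $\xi$. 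The base case $n=1$ must be handled separately: there one studies occurrences of $w_2l_1 = acac$ directly, using Lemma \ref{seq2}, to see what two-letter words can follow and precede, and checks that the only possibilities are $abac$ and $acab$, i.e. $w_1l_1w_1l_2$ and $w_1l_2w_1l_1$.

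The induction step goes as follows. Suppose $\xi_{N+1}\dots\xi_{N+M}$ is an occurrence of $w_{n+1}l_n$ with $M=2^{n+1}$. Looking at the first few letters of this word (which start $acab$ or $acad$, hence have a letter from $\{b,c,d\}$ in position $4$) and applying Lemma \ref{seq2}, one gets $4\mid N$ and $4\mid M$, say $N=4N'$, $M=4M'$. Since $w_{n+1}l_n=\si(w_nl_{n-1})$, Lemma \ref{seq3} yields that $\xi_{2N'+1}\dots\xi_{2N'+2M'}$ is an occurrence of $w_nl_{n-1}$. Now here is the crucial point: $w_nl_{n-1}$ is a word whose behaviour we must already understand. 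But $w_nl_{n-1}=w_{n-1}l_{n-1}w_{n-1}l_{n-1}=w_nl_{n-1}$; more to the point, since $l_{n-1}\ne l_n$, the word $w_nl_{n-1}$ is \emph{not} of the form $w_nl_n$, so Lemma \ref{seq-st1} does not apply to it. Instead, I would invoke the case $n-1$ of the present lemma: $w_nl_{n-1}=w_{(n-1)+1}l_{n-1}$, so by induction any occurrence of it is followed and preceded by the same word of length $2^n$, which is either $w_{n-1}l_{n-1}w_{n-1}l_n$ or $w_{n-1}l_nw_{n-1}l_{n-1}$. Applying $\si$ and using Lemma \ref{seq3} three more times (to translate the context on both sides through the substitution), and noting $\si(w_{n-1}l_{n-1}w_{n-1}l_n)=w_nl_nw_nl_{n+1}$ and $\si(w_{n-1}l_nw_{n-1}l_{n-1})=w_nl_{n+1}w_nl_n$, we conclude that $\xi_{N+1}\dots\xi_{N+M}$ is followed and preceded by the same word of length $2^{n+1}$, equal to $w_nl_nw_nl_{n+1}$ or $w_nl_{n+1}w_nl_n$. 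This closes the induction.

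The main obstacle I anticipate is bookkeeping at the boundary between the substitution blocks: one must be careful that the preimage occurrence of $w_nl_{n-1}$ is not too close to the start of $\xi$ (so that a left context of length $2^n$ exists), and that the positions line up so Lemma \ref{seq3} can legitimately be applied to the context words on both the left and the right simultaneously. This is the same technical point that appears in Lemmas \ref{seq-st1} and \ref{seq-st2}, and it is handled the same way — the inductive hypothesis at level $n-1$ already guarantees the needed room (the bound $N'\ge M'$ there, which here becomes a bound ensuring enough letters precede the pulled-back occurrence), and the divisibility $4\mid N$ ensures the substitution-block alignment. A secondary, purely notational, nuisance is that the case $n=1$ has $l_{n-1}=l_0$ undefined in the lemma's framework, which is why the base case is done by hand rather than by the descent; I would state this explicitly to avoid confusion.
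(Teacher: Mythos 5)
Your proposal is correct and follows essentially the same route as the paper: a hand-checked base case $n=1$ for occurrences of $acac$ via Lemma \ref{seq2}, and an induction step that uses the divisibility of the starting position by $4$, pulls the occurrence of $w_{n+1}l_n=\si(w_nl_{n-1})$ back through $\si$ via Lemma \ref{seq3} to an occurrence of $w_nl_{n-1}$, applies the inductive hypothesis at level $n-1$, and pushes the two context words forward again with Lemma \ref{seq3}. The only cosmetic difference is indexing ($n-1\to n$ versus the paper's $k\to k+1$), and the technical alignment points you flag are exactly the ones the paper addresses.
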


\begin{proof}
The proof is by induction on $n$.  First consider the case $n=1$.  By Lemma
\ref{seq2}, $\xi_N=a$ if $N$ is odd, $\xi_N=c$ if $N$ is even but not
divisible by $4$, and $\xi_N=b$ if $N$ is divisible by $4$ but not by $8$.
Suppose $\xi_{M+1}\xi_{M+2}\xi_{M+3}\xi_{M+4}$ is an occurrence of
$w_2l_1=acac$ in $\xi$.  Then $M$ is even.  Moreover, the one of the
numbers $M+2$ and $M+4$ that is divisible by $4$ must be divisible by $8$
as well.  In particular, $M\ge4$.  If $M+4$ is divisible by $8$ then
$\xi_{M+5}\xi_{M+6}\xi_{M+7}\xi_{M+8}=\xi_{M-3}\xi_{M-2}\xi_{M-1}\xi_M=acab
=w_1l_1w_1l_2$.  If $M+2$ is divisible by $8$ then
$\xi_{M+5}\xi_{M+6}\xi_{M+7}\xi_{M+8}=\xi_{M-3}\xi_{M-2}\xi_{M-1}\xi_M
=abac=w_1l_2w_1l_1$.

Now let $k\ge1$ and assume the lemma holds for $n=k$.  Suppose
$\xi_{N+1}\xi_{N+2}\dots\xi_{N+M}$ is an occurrence of the word
$w_{k+2}l_{k+1}$ in $\xi$.  The first four letters of $w_{k+2}l_{k+1}$ are
$acab$ so that $\xi_{N+4}=b$.  Lemma \ref{seq2} implies that $N+4$ is
divisible by $4$.  Besides, $M=2^{k+2}$ due to Lemma \ref{seq1}.  Hence $N$
and $M$ are both divisible by $4$, i.e., $N=4N'$ and $M=4M'$ for some
$M',N'\in\bZ$.  Since $\xi_{N+1}\xi_{N+2}\dots\xi_{N+M}=w_{k+2}l_{k+1}
=\si(w_{k+1}l_k)$, it follows from Lemma \ref{seq3} that
$\xi_{2N'+1}\xi_{2N'+2}\dots\xi_{2N'+2M'}$ is an occurrence of $w_{k+1}l_k$.
By the inductive assumption, this occurrence is immediately followed and
immediately preceded by the same word $u$ of even length, where
$u=w_kl_kw_kl_{k+1}$ or $u=w_kl_{k+1}w_kl_k$.  Applying Lemma \ref{seq3}
two more times, we obtain that $\xi_{N+1}\xi_{N+2}\dots\xi_{N+M}$ is
immediately followed and immediately preceded by $\si(u)$.  Note that
$\si(u)=\si(w_kl_kw_kl_{k+1})=w_{k+1}l_{k+1}w_{k+1}l_{k+2}$ or
$\si(u)=\si(w_kl_{k+1}w_kl_k)=w_{k+1}l_{k+2}w_{k+1}l_{k+1}$.  The length of
$\si(u)$ is $2^{k+2}$ due to Lemma \ref{seq1}.  This completes the
induction step.
\end{proof}

Next we are going to derive some properties of the cylinders in $\Om$.

\begin{lemma}\label{cyl1}
Let $n\ge2$.  Then the cylinder $[.w_n]$ is disjoint from $T^N([.w_n])$ for
$1\le N<2^{n-1}$.
\end{lemma}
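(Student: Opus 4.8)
The plan is to reduce the assertion to a combinatorial statement about the word $w_n$ and then prove that statement by induction on $n$. Call a positive integer $N$ a \emph{period} of a finite word $u=u_1u_2\dots u_m$ if $u_i=u_{i+N}$ for all $1\le i\le m-N$. First I would observe that if $\om\in[.w_n]\cap T^N([.w_n])$ with $1\le N\le 2^n-2$, then the entries of $\om$ satisfy $\om_1\dots\om_{2^n-1}=w_n$ and $\om_{1-N}\dots\om_{2^n-1-N}=w_n$; comparing these two occurrences of $w_n$ on the overlap positions $1,\dots,2^n-1-N$ (a nonempty range because $N\le 2^n-2$) shows that $N$ is a period of $w_n$. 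Since $2^{n-1}-1\le 2^n-2$ for $n\ge2$, it therefore suffices to prove the following statement $(\ast)$: for every $n\ge 2$, no integer $N$ with $1\le N\le 2^{n-1}-1$ is a period of $w_n$.

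I would prove $(\ast)$ by induction on $n$. The base case $n=2$ is clear, since $w_2=aca$ is not a constant word and so $1$ is not a period. For the inductive step, assume $(\ast)$ holds for some $n\ge2$, and suppose for contradiction that $w_{n+1}$ has a period $N$ with $1\le N\le 2^n-1$. Equivalently, $w_{n+1}$ has a \emph{border}---a proper prefix that is also a suffix---of length $L=2^{n+1}-1-N$, where $2^n\le L\le 2^{n+1}-2$. By Lemma \ref{seq1} we may write $w_{n+1}=w_nl_nw_n$ with $|w_n|=2^n-1$; setting $k=L-2^n\in\{0,1,\dots,2^n-2\}$, the border $B$ of length $L$ then admits two descriptions. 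Reading it as a prefix of $w_{n+1}$: $B=w_n\,l_n\,v$, where $v$ is the prefix of $w_n$ of length $k$. Reading it as a suffix of $w_{n+1}$: $B=v'\,l_n\,w_n$, where $v'$ is the suffix of $w_n$ of length $k$.

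The core of the argument is to equate the two descriptions of $B$ letter by letter. Matching the first $k$ letters yields $v=v'$, that is, $w_n$ has a border of length $k$, so $2^n-1-k$ is a period of $w_n$; matching the letters in positions $k+2,\dots,2^n-1$ yields that $k+1$ is a period of $w_n$. (If $k=0$ or $k=2^n-2$ one of these two ranges degenerates, but then the other directly shows that $1$ is a period of $w_n$, contradicting the induction hypothesis.) By the induction hypothesis $w_n$ has no period less than $2^{n-1}$, so $k+1\ge 2^{n-1}$ and $2^n-1-k\ge 2^{n-1}$, which together force $k=2^{n-1}-1$. For this single remaining value, $v$ and $v'$ are the prefix and the suffix of $w_n$ of length $2^{n-1}-1$, and each of them equals $w_{n-1}$ by Lemma \ref{seq1}. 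Hence $w_n\,l_n\,w_{n-1}=B=w_{n-1}\,l_n\,w_n$; substituting $w_n=w_{n-1}l_{n-1}w_{n-1}$ on both sides and cancelling the common prefix $w_{n-1}$ leaves a word beginning with the letter $l_{n-1}$ equal to a word beginning with $l_n$. Since $l_j$ depends only on $j$ modulo $3$ and $n-1\not\equiv n\pmod3$, we have $l_{n-1}\ne l_n$, a contradiction. This completes the induction, and with it the proof.

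I expect the main obstacle to be the final value $k=2^{n-1}-1$: it corresponds to the largest possible self-overlap of $w_{n+1}$, for which the induction hypothesis on its own yields no contradiction, so one must instead exploit the elementary fact that consecutive terms $l_{n-1},l_n$ of the sequence $l_1,l_2,\dots$ are distinct letters. Everything else---writing down the prefix and suffix descriptions of the border $B$ and keeping track of which positions carry the marker letter $l_n$---is routine index bookkeeping.
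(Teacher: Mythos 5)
Your proof is correct, but the heart of it runs along a different line than the paper's. Both arguments begin with the same reduction: a point of $[.w_n]\cap T^N([.w_n])$ forces the two occurrences of $w_n$ to agree on their overlap, so $N$ must be a period of $w_n$, and everything comes down to showing that $w_n$ has no period below $2^{n-1}$. At that point the paper dispatches the claim in two lines using Lemma \ref{seq2}: writing $N=2^kK$ with $K$ odd, one has $\xi_N=l_k$ (or $a$) while $\xi_{2N}=l_{k+1}$, so $\xi_N\ne\xi_{2N}$, and since a period $N$ of the prefix $w_n$ of $\xi$ would force $\xi_N=\xi_{2N}$ whenever $2N\le 2^n-1$, it follows that $2N>2^n-1$. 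Your route instead proves the no-small-period statement by induction on $n$ via a border analysis of $w_{n+1}=w_nl_nw_n$: the two readings of a border $B=w_nl_nv=v'l_nw_n$ give two periods $k+1$ and $2^n-1-k$ of $w_n$, the inductive hypothesis pins down $k=2^{n-1}-1$, and that last case is excluded because $l_{n-1}\ne l_n$. I checked the index bookkeeping, including the degenerate cases $k=0$ and $k=2^n-2$, and it all works. The trade-off is clear: the paper's argument is much shorter but leans on the Toeplitz (2-adic) structure of $\xi$ encoded in Lemma \ref{seq2}, whereas yours is longer but self-contained modulo Lemma \ref{seq1} and the observation that consecutive terms of $l_1,l_2,\dots$ are distinct; it is essentially a proof that the words $w_n$ are ``almost unbordered,'' which is a reusable fact in its own right.
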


\begin{proof}
Let $n\ge2$ and suppose the cylinder $[.w_n]$ is not disjoint from
$T^N([.w_n])$ for some $N\ge1$.  We need to show that $N\ge 2^{n-1}$.  Take
any element $\om=\dots\om_{-2}\om_{-1}\om_0.\om_1\om_2\dots$ of the
intersection  $[.w_n]\cap T^N([.w_n])$.  Then $\om$ and $T^{-N}(\om)$ are
both in $[.w_n]$.  By construction of the infinite word
$\xi=\xi_1\xi_2\dots$, the word $w_n$ is a beginning of $\xi$.  The length
of $w_n$ is $2^n-1$ due to Lemma \ref{seq1}.  Since $\om$ and $T^{-N}(\om)$
belong to $[.w_n]$, it follows that $\om_i=\om_{i-N}=\xi_i$ for $1\le i\le
2^n-1$.  As a consequence, $\xi_i=\xi_{i+N}$ whenever $1\le i<i+N\le2^n-1$.

The integer $N$ is uniquely represented as $N=2^kK$, where $k\ge0$ and $K$
is odd.  By Lemma \ref{seq2}, $\xi_N=l_k$ if $k\ge1$ and $\xi_N=a$ if
$k=0$.  By the same lemma, $\xi_{2N}=l_{k+1}$, which implies that $\xi_N\ne
\xi_{2N}=\xi_{N+N}$.  Then it follows from the above that $2N>2^n-1$.
Since $N$ is an integer, this is equivalent to $2N\ge 2^n$ or $N\ge
2^{n-1}$.
\end{proof}

\begin{lemma}\label{cyl2-pre}
$[l.w_nl_n]=[w_nl.w_nl_nw_n]$ and $[l_n.w_nl]=[w_nl_n.w_nlw_n]$ for all 
$l\in\{b,c,d\}$ and $n\ge1$.  Moreover, if $l\ne l_n$ then 
$[l.w_nl_n]=[w_nl_nw_nl.]$.
\end{lemma}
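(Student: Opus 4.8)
All three assertions are equalities between cylinders, and I treat them in the order stated. In the first two, the inclusion $\supseteq$ is immediate, since a sequence in the right-hand cylinder plainly satisfies the fewer constraints defining the left-hand one; the other inclusions, and both inclusions of the third equality, need an argument. Throughout I use that every finite subword of an $\om\in\Om$ occurs in $\xi$ and, since every word occurring in $\xi$ occurs there infinitely often, may be assumed to occur far from the start of $\xi$. My plan is to prove the first equality directly from Lemma \ref{seq-st1}, deduce the second from it by a reflection symmetry of $\Om$, and deduce the third from the first together with Lemma \ref{seq-st2} (and its analogue for $w_nl_{n+2}$).

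For $[l.w_nl_n]\subseteq[w_nl.w_nl_nw_n]$: let $\om\in[l.w_nl_n]$, so $\om_0=l$ and $\om_1\dots\om_{2^n}=w_nl_n$ (recall $|w_n|=2^n-1$ by Lemma \ref{seq1}). Then $\om_1\dots\om_{2^n}=w_nl_n$ is a non-initial occurrence of $w_nl_n$ in $\xi$, so by Lemma \ref{seq-st1} it is, inside $\om$, immediately followed by one of $w_nb,w_nc,w_nd$ and immediately preceded by one of these. In particular $\om_{2^n+1}\dots\om_{2^{n+1}-1}=w_n$, while the preceding block is $w_n$ followed by $\om_0=l$, that is, $\om_{-2^n+1}\dots\om_0=w_nl$. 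Hence $\om_1\dots\om_{2^{n+1}-1}=w_nl_nw_n$, and $\om\in[w_nl.w_nl_nw_n]$.

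For the second equality I use the reflection $R\colon\Om\to\Om$ given by $(R\om)_k=\om_{1-k}$. Each finite subword of a sequence in $\Om$ occurs inside some prefix $w_N$ of $\xi$ (their lengths $2^N-1$ grow without bound), and every $w_N$ is a palindrome, by induction using $w_1=a$ and $w_{N+1}=w_Nl_Nw_N$; hence the reverse of any subword of $\xi$ is again a subword of $\xi$, $R$ maps $\Om$ onto $\Om$, and since $R^2=\mathrm{id}$ it is a bijection. Directly from the definitions, $R([u.v])=[\overline{v}.\overline{u}]$, where $\overline{w}$ denotes the word $w$ written backwards. Since $\overline{w_n}=w_n$ and $l_n$ is a single letter, one computes $R([l.w_nl_n])=[l_nw_n.l]$ and $R([w_nl.w_nl_nw_n])=[w_nl_nw_n.lw_n]$; applying $T^{-(2^n-1)}$, which moves the separating dot $2^n-1$ positions to the left, turns these into $[l_n.w_nl]$ and $[w_nl_n.w_nlw_n]$. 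As $T^{-(2^n-1)}\circ R$ is a bijection of $\Om$, applying it to the first equality gives the second.

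For the third equality, by the first equality it suffices to prove $[w_nl.w_nl_nw_n]=[w_nl_nw_nl.]$ when $l\ne l_n$. The key input is: \emph{every occurrence of $w_nl$ in $\xi$ with $l\ne l_n$ is immediately preceded, and immediately followed, by $w_nl_n$.} For $l=l_{n+1}$ this is precisely Lemma \ref{seq-st2}; for the remaining value $l=l_{n+2}$ it is the analogous statement, which I would prove by the same induction on $n$ that yields Lemmas \ref{seq-st1}--\ref{seq-st3} --- base case from Lemma \ref{seq2}, inductive step passing through $\si$ via Lemma \ref{seq3}, using $w_{n+1}l_{n+3}=\si(w_nl_{n+2})$ and the fact that such an occurrence is aligned with the $\si$-blocks. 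Granting this, if $\om\in[w_nl.w_nl_nw_n]$ then the non-initial occurrence $\om_{-2^n+1}\dots\om_0=w_nl$ is preceded by $w_nl_n$, whence $\om_{-2^{n+1}+1}\dots\om_0=w_nl_nw_nl$ and $\om\in[w_nl_nw_nl.]$; conversely, if $\om\in[w_nl_nw_nl.]$ then $\om_0=l$ and $\om_{-2^n+1}\dots\om_0=w_nl$, this occurrence of $w_nl$ is followed by $w_nl_n$, which by Lemma \ref{seq-st1} is in turn followed by $w_n$, so $\om_1\dots\om_{2^{n+1}-1}=w_nl_nw_n$ and $\om\in[w_nl.w_nl_nw_n]$. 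The main obstacle is the $l=l_{n+2}$ case of the italicized statement; the rest is unwinding the definitions of the cylinders and of the shift $T$.
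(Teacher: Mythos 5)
Your proof of the first equality matches the paper's (both rest on Lemma \ref{seq-st1}), and your reflection argument for the second equality is correct and genuinely different: the paper instead proves $[l_n.w_nl]=[w_nl_n.w_nlw_n]$ by a case analysis on $l$ using Lemmas \ref{seq-st2} and \ref{seq-st3}, whereas you get it for free from the palindromicity of the $w_n$'s via the involution $R$ composed with a power of $T$. That part is a nice shortcut.

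The third equality, however, has a genuine gap. Your ``key input'' --- that \emph{every} occurrence of $w_nl$ with $l\ne l_n$ is immediately preceded and followed by $w_nl_n$ --- is simply false for $l=l_{n+2}=l_{n-1}$, so the induction you propose cannot succeed. Concretely, take $n=2$, so $w_nl=w_2l_1=acac$ and $w_nl_n=w_2l_2=acab$: the occurrence of $acac$ at positions $15$--$18$ of $\xi=acabacadacabacacacabacad\dots$ is preceded by $\xi_{11}\dots\xi_{14}=abac$ and followed by $\xi_{19}\dots\xi_{22}=abac$, not by $acab$. What is actually true is the weaker statement of Lemma \ref{seq-st3}: an occurrence of $w_nl_{n-1}=w_{n-1}l_{n-1}w_{n-1}l_{n-1}$ is preceded and followed by the \emph{same} word of length $2^n$, which may be either $w_nl_n$ or $w_{n-1}l_nw_{n-1}l_{n-1}$. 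The paper's proof of this case exploits precisely that ``same word on both sides'' dichotomy: once the context forces one side to be $w_nl_n$ (e.g.\ in $[w_nl_nw_nl.]$ the preceding word is given to be $w_nl_n$, which rules out the alternative $w_{n-1}l_nw_{n-1}l_{n-1}$), the other side is $w_nl_n$ as well. Your unwinding of the cylinders can be salvaged by substituting this conditional statement for your unconditional one, but as written the case $l=l_{n+2}$ rests on a false lemma. (You would also need to say something for $n=1$, $l=d$, where $l_{n-1}$ is undefined; the paper treats it separately.)
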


\begin{proof}
By Lemma \ref{seq-st1}, any occurrence of the word $w_nl_n$ in $\xi$ is
immediately followed by $w_nb$, $w_nc$, or $w_nd$ and, unless it is the
beginning of $\xi$, immediately preceded by $w_nb$, $w_nc$, or $w_nd$.  As
a consequence, any occurrence of $lw_nl_n$ is immediately followed and
preceded by $w_n$.  This implies an equality of cylinders $[l.w_nl_n]=
[w_nl.w_nl_n]=[w_nl.w_nl_nw_n]$.  In the case $l=l_n$, we are done.  In the
other cases, two more equalities are to be derived.

Next consider the case $l=l_{n+1}$.  By Lemma \ref{seq-st2}, any occurrence
of the word $w_nl_{n+1}$ in $\xi$ is immediately followed and preceded by
$w_nl_n$.  Therefore any occurrence of $l_nw_nl$ is immediately followed
and preceded by $w_n$ so that $[l_n.w_nl]=[w_nl_n.w_nlw_n]$.  Besides, this
implies that $[w_nl_nw_nl.]=[w_nl.]=[w_nl.w_nl_n]$.  We already know that
$[w_nl.w_nl_n]=[l.w_nl_n]$.

Now consider the case when $l\ne l_n$, $l\ne l_{n+1}$, and $n\ge2$.  In
this case, $l=l_{n-1}$.  By Lemma \ref{seq-st3}, any occurrence of the word
$w_nl_{n-1}=w_{n-1}l_{n-1}w_{n-1}l_{n-1}$ in $\xi$ is immediately followed
and preceded by the same word of length $2^n$, which can be either
$w_{n-1}l_{n-1}w_{n-1}l_n=w_nl_n$ or $w_{n-1}l_nw_{n-1}l_{n-1}$.  Therefore
any occurrence of $l_nw_nl$ is immediately followed and preceded by $w_n$
so that $[l_n.w_nl]=[w_nl_n.w_nlw_n]$.  Another consequence is that
$[w_nl_nw_nl.]=[w_nl_nw_nl.w_nl_n]=[w_nl.w_nl_n]$ (unlike the previous
case, this does not equal $[w_nl.]$).  We already know that
$[w_nl.w_nl_n]=[l.w_nl_n]$.

It remains to consider the case when $l\ne l_n$, $l\ne l_{n+1}$, and $n=1$.
In this case, $w_n=a$, $l_n=c$, and $l=d$.  As already observed in the
proof of Lemma \ref{seq-st2}, every occurrence of the letter $d$ in $\xi$
is immediately followed and preceded by $aca$.  Therefore $[c.ad]=[ac.ada]$
and $[d.ac]=[acad.aca]=[acad.]$.
\end{proof}

The following lemma is crucial for the proof of Theorem \ref{main1}.

\begin{lemma}\label{cyl2}
If $C$ is a nonempty cylinder of dimension $2^n$, then $C=T^N([.w_nl])$ for
some $l\in\{b,c,d\}$ and $N\in\bZ$.
\end{lemma}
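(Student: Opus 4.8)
\emph{The plan} is to strip off the shift, reduce to a purely combinatorial statement about the length-$2^n$ factors of $\xi$, and then use the block structure of $\xi$ that emerges from the proof of Lemma \ref{seq4}.

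\emph{Removing the shift.} Write the nonempty cylinder $C$ of dimension $2^n$ as $C=[u.w]$ with $|u|=M$ and $|w|=2^n-M$. Straight from the definitions of $T$ and of cylinders one gets $[u.w]=T^{-M}([.uw])$, and since $C\ne\emptyset$ the word $v:=uw$, of length $2^n$, occurs in $\xi$. So it suffices to prove: for every factor $v$ of $\xi$ of length $2^n$ there exist $l\in\{b,c,d\}$ and $N\in\bZ$ with $[.v]=T^N([.w_nl])$.

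\emph{Choosing $l$ and $N$.} By the proof of Lemma \ref{seq4}, $\xi=w_ns_1w_ns_2w_ns_3\dots$ with each $s_i\in\{b,c,d\}$; since $|w_n|=2^n-1$, the ``separator'' letters of this decomposition occupy exactly the positions $2^n,2\cdot2^n,3\cdot2^n,\dots$, so any window of length $2^n$ meets precisely one separator. Picking an occurrence of $v$ in $\xi$ we may thus write $v=w_-lw_+$, where $l\in\{b,c,d\}$ is that separator letter, $w_+$ is a beginning and $w_-$ an ending of $w_n$ with $|w_+|+|w_-|=2^n-1$, whence $w_+w_-=w_n$ and $w_nl=w_+w_-l$. (Several such decompositions may occur, precisely when $w_nl$ has a nontrivial period; the candidate words $w_+$ are then nested as suffixes of $v$, and I take the shortest one.) Set $N=-|w_+|$. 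Unwinding the definitions, the identity $[.v]=T^{-|w_+|}([.w_nl])$ is equivalent to the conjunction of: (i) every occurrence of $v=w_-lw_+$ in $\om\in\Om$ is immediately preceded by $w_+$, and (ii) every occurrence of $w_nl=w_+w_-l$ in $\om\in\Om$ is immediately followed by $w_+$.

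\emph{Proving (i) and (ii).} Every finite factor of an $\om\in\Om$ is a factor of $\xi$, so one argues inside $\xi$, using that $\om$ carries a unique bi-infinite block decomposition $\dots w_nt_{-1}w_nt_0w_nt_1\dots$ of the same shape — this existence and uniqueness rests on Lemma \ref{cyl1}, which prevents the blocks from sliding. For (i): an occurrence of $v$ meets exactly one separator, so $v$ begins at the tail of a block, the letters just before it form the head of that block, this head is one of the admissible $w_+$'s for $v$, and hence (being a suffix of $v$ no shorter than the chosen one) it has $w_+$ as a suffix; this gives (i). For (ii): an occurrence of $w_nl$ either coincides with a block followed by its separator, in which case the next block begins with $w_+$, or it overlaps two consecutive blocks, and then Lemma \ref{cyl1} — equivalently, that the smallest period of $w_n$ is $2^{n-1}$ — together with a Fine--Wilf type argument forces the overlap to be exactly by $2^{n-1}$, after which the periodicity of $w_n$ again makes the letters following $w_nl$ equal to $w_+$. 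I expect this last part, the bookkeeping of how occurrences of $w_n$ and of $w_nl$ can overlap the block decomposition (controlled throughout by Lemma \ref{cyl1}, and possibly streamlined by Lemmas \ref{seq-st1}--\ref{seq-st3} and \ref{cyl2-pre}), to be the main obstacle; the two reductions above are routine. As an alternative one can induct on $n$ in the style of Lemmas \ref{seq-st1}--\ref{seq-st3}: the base case $n=1$ is a finite check over the six length-$2$ factors $ab,ac,ad,ba,ca,da$ of $\xi$, and the inductive step desubstitutes a length-$2^{n+1}$ window by Lemma \ref{seq3}, using $\si(w_nl)=w_{n+1}\si(l)$ and $\si(\{b,c,d\})=\{b,c,d\}$ and passing between cylinders via $[u.w]=T^{-|u|}([.uw])$.
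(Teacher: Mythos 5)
Your reduction of the lemma to the two conditions (i) and (ii) is correct, apart from a harmless sign slip (with the paper's conventions $[u.w]=T^{+|u|}([.uw])$, so the right exponent is $N=+|w_+|$; immaterial, since only existence of some $N$ is claimed). The genuine gap is the assertion that every $\om\in\Om$ ``carries a unique bi-infinite block decomposition $\dots w_nt_{-1}w_nt_0w_nt_1\dots$'' and that this ``rests on Lemma \ref{cyl1}.'' It does not. Lemma \ref{cyl1} only forbids two occurrences of $w_n$ from starting at positions that differ by $1,\dots,2^{n-1}-1$; it explicitly allows offset $2^{n-1}$, and such overlapping occurrences really happen (the paper warns about this right before Lemma \ref{seq-st1}; e.g.\ $w_2=aca$ occurs in $\xi$ at positions $13$--$15$ and again at $15$--$17$). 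Existence of a block decomposition for an arbitrary $\om\in\Om$ --- as opposed to $\xi$ itself, where it comes from applying $\si^n$ --- requires a separate compactness/nestedness argument over the finite factors of $\om$, and uniqueness requires, beyond Lemma \ref{cyl1}, ruling out the offset-$2^{n-1}$ case via aperiodicity. This tower structure of $\Om$ over the set of block starts is essentially recognizability of the substitution and is comparable in strength to Lemma \ref{cyl2} itself, so it cannot be taken for granted.

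The part you defer as ``bookkeeping'' is also exactly where the content sits, and your sketch is incomplete at the delicate point: in (ii), when an occurrence of $w_nl$ straddles two blocks (offset exactly $2^{n-1}$, which forces $l=l_{n-1}$), the letters following it are the second half $w_{n-1}$ of the next block's $w_n$ plus that block's separator, so the argument closes only if your chosen $w_+$ has length at most $2^{n-1}-1$; you would still need to verify that the ``shortest $w_+$'' always satisfies this when $l=l_{n-1}$ (it does, because $w_nl_{n-1}=(w_{n-1}l_{n-1})^2$ has period $2^{n-1}$, but that check is missing). For contrast, the paper never invokes a global block decomposition of $\om$: it proves finite statements about what immediately precedes and follows occurrences of $w_nl_n$, $w_nl_{n+1}$, $w_{n+1}l_n$ in $\xi$ (Lemmas \ref{seq-st1}--\ref{seq-st3}, packaged as the cylinder identities of Lemma \ref{cyl2-pre}), which transfer to $\Om$ for free, and then sandwiches the given cylinder between two cylinders that Lemma \ref{cyl2-pre} shows are equal. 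Your alternative suggestion (induction on $n$ with desubstitution via Lemma \ref{seq3}) is much closer to what the paper actually does, but as written neither of your routes is carried far enough to constitute a proof.
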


\begin{proof}
Let $C$ be a nonempty cylinder of dimension $2^n$.  We have $C=[w_-.w_+]$ 
for some words $w_-$ and $w_+$ such that the concatenated word $w=w_-w_+$ 
has length $2^n$.  Since $C$ is a nonempty set, the word $w$ must occur as 
a subword in the infinite word $\xi$.  By Lemma \ref{seq4}, $w$ can be 
obtained from a word $w_nl$, $l\in\{b,c,d\}$, by a cyclic permutation of 
letters.  We are going to show that $[.w]=T^N([.w_nl])$ for some 
$N\in\bZ$.  Then $C=T^M([.w])=T^{M+N}([.w_nl])$, where $M$ is the length of 
$w_-$.

First consider the case $n=1$.  In this case, $w=w_1l=al$ or $w=la$.  By
Lemma \ref{seq2},  $\xi_N=a$ if and only if $N$ is odd.  Hence every
occurrence of the letter $l$ in $\xi$ is immediately followed and preceded
by $a$.  Therefore $[.la]=[.l]=[a.l]=T([.al])$.

Now assume $n\ge2$.  In this case, $w_n=w_{n-1}l_{n-1}w_{n-1}$ due to Lemma
\ref{seq1}.  Let $u_1$ and $u_2$ be words such that $w_nl=u_1u_2$ and
$w=u_2u_1$.  If $u_2$ is longer than $u_1$, then $l_{n-1}w_{n-1}l$ is an
ending of $u_2$, i.e., $u_2=u'_2l_{n-1}w_{n-1}l$ for some word $u'_2$.
Clearly, $w=u'_2l_{n-1}w_{n-1}lu_1$ and $u_1u'_2=w_{n-1}$.  If $u_2$ is not
longer than $u_1$ and not empty, we have $u_1=w_{n-1}l_{n-1}u'_1$ and
$u_2=u'_2l$ for some words $u'_1$ and $u'_2$.  Then
$w=u'_2lw_{n-1}l_{n-1}u'_1$ and $u'_1u'_2=w_{n-1}$.  Finally, if $u_2$ is
empty, then $w=w_nl=w_{n-1}l_{n-1}w_{n-1}l=u'_2l_{n-1}w_{n-1}lu'_1$, where
$u'_1$ is the empty word and $u'_2=w_{n-1}$.

By the above the word $w$ can be represented as $u'_2l_{n-1}w_{n-1}lu'_1$
or $u'_2lw_{n-1}l_{n-1}u'_1$, where the words $u'_1$ and $u'_2$ satisfy
$u'_1u'_2=w_{n-1}$.  Note that both representations are the same if
$l=l_{n-1}$ (also, in this case there are two different choices for the
pair $u_1,u_2$).  First assume that $w=u'_2l_{n-1}w_{n-1}lu'_1$.  Since
$u'_1$ is a beginning of $w_{n-1}$ and $u'_2$ is an ending of $w_{n-1}$, it
follows that $[w_{n-1}l_{n-1}.w_{n-1}lw_{n-1}]\subset
[u'_2l_{n-1}.w_{n-1}lu'_1]\subset[l_{n-1}.w_{n-1}l]$.  Similarly,
$[w_{n-1}l_{n-1}.w_{n-1}lw_{n-1}]\subset[w_{n-1}l_{n-1}.w_{n-1}l]\subset
[l_{n-1}.w_{n-1}l]$.  Since $[w_{n-1}l_{n-1}.w_{n-1}lw_{n-1}]=
[l_{n-1}.w_{n-1}l]$ due to Lemma \ref{cyl2-pre}, we obtain that
$[u'_2l_{n-1}.w_{n-1}lu'_1]=[w_{n-1}l_{n-1}.w_{n-1}l]$.  The latter
equality can be rewritten as $T^{N_1}([.w])=T^{N_2}([.w_nl])$, where $N_1$
is the length of $u'_2l_{n-1}$ and $N_2$ is the length of $w_{n-1}l_{n-1}$
($N_2=2^{n-1}$).  Then $[.w]=T^{N_2-N_1}([.w_nl])$.

Now assume that $l\ne l_{n-1}$ and $w=u'_2lw_{n-1}l_{n-1}u'_1$.  Just like
in the previous case, we obtain that $[w_{n-1}l.w_{n-1}l_{n-1}w_{n-1}]
\subset[u'_2l.w_{n-1}l_{n-1}u'_1]\subset[l.w_{n-1}l_{n-1}]$.  By Lemma
\ref{cyl2-pre}, $[w_{n-1}l.w_{n-1}l_{n-1}w_{n-1}]=[l.w_{n-1}l_{n-1}]
=[w_{n-1}l_{n-1}w_{n-1}l.]$.  It follows that $[u'_2l.w_{n-1}l_{n-1}u'_1]
=[w_{n-1}l_{n-1}w_{n-1}l.]$.  The latter equality can be rewritten as
$T^{N_1}([.w])=T^{N_2}([.w_nl])$, where $N_1$ is the length of $u'_2l$ and
$N_2$ is the length of $w_nl$ ($N_2=2^n$).  Then
$[.w]=T^{N_2-N_1}([.w_nl])$.
\end{proof}

The next three lemmas establish relations between cylinders of dimension 
$2^n$ and cylinders of dimension $2^{n+1}$.  We shall use $\sqcup$ to 
denote disjoint unions.  Namely, $U=U_1\sqcup U_2\sqcup\ldots\sqcup U_k$ 
means that $U=U_1\cup U_2\cup\ldots\cup U_k$ and the sets 
$U_1,U_2,\dots,U_k$ are pairwise disjoint. 

\begin{lemma}\label{cyl-st1}
$[.w_nl_n]=[.w_{n+1}b]\sqcup[.w_{n+1}c]\sqcup[.w_{n+1}d]$ for all $n\ge1$.
\end{lemma}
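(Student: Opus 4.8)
The plan is to prove the two inclusions separately and dispose of disjointness at once. Disjointness is immediate: since $w_{n+1}$ has length $2^{n+1}-1$ by Lemma \ref{seq1}, the cylinders $[.w_{n+1}b]$, $[.w_{n+1}c]$, $[.w_{n+1}d]$ prescribe three distinct letters at coordinate $2^{n+1}$, so they are pairwise disjoint.

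For the inclusion $[.w_{n+1}b]\cup[.w_{n+1}c]\cup[.w_{n+1}d]\subseteq[.w_nl_n]$, I would invoke Lemma \ref{seq1}, which gives $w_{n+1}=w_nl_nw_n$; hence each of the words $w_{n+1}b$, $w_{n+1}c$, $w_{n+1}d$ begins with $w_nl_n$, and $w_nl_n$ has length $2^n$. So if $\om\in\Om$ has $\om_1\dots\om_{2^{n+1}}$ equal to one of these three words, then already $\om_1\dots\om_{2^n}=w_nl_n$, i.e.\ $\om\in[.w_nl_n]$. For the reverse inclusion, take $\om\in[.w_nl_n]$ and look at the finite word $\om_1\dots\om_{2^{n+1}}$. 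Being a subword of an element of $\Om$, it occurs somewhere in $\xi$, say $\om_1\dots\om_{2^{n+1}}=\xi_{m+1}\dots\xi_{m+2^{n+1}}$. Its prefix of length $2^n$ is $w_nl_n$, so $\xi_{m+1}\dots\xi_{m+2^n}$ is an occurrence of $w_nl_n$ in $\xi$. By the ``immediately followed by'' part of Lemma \ref{seq-st1}, this occurrence is immediately followed by $w_nb$, $w_nc$, or $w_nd$; that is, $\om_{2^n+1}\dots\om_{2^{n+1}}\in\{w_nb,w_nc,w_nd\}$. Together with $\om_1\dots\om_{2^n}=w_nl_n$ and Lemma \ref{seq1} this forces $\om_1\dots\om_{2^{n+1}}\in\{w_{n+1}b,w_{n+1}c,w_{n+1}d\}$, so $\om$ lies in one of the three cylinders.

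The only points requiring care are the length bookkeeping — that $w_nl_n$ has length exactly $2^n$, so that cylinders of dimension $2^{n+1}$ genuinely refine $[.w_nl_n]$ — and the observation that the forward-extension clause of Lemma \ref{seq-st1} holds unconditionally (only the backward-extension clause carries the ``unless it is the beginning of $\xi$'' exception, which is never needed here because elements of $\Om$ are bi-infinite). I do not expect a real obstacle: Lemma \ref{seq-st1} supplies exactly the combinatorial fact driving the argument.
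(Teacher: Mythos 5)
Your proof is correct and follows the same route as the paper: disjointness is immediate, the containment of each $[.w_{n+1}l]$ in $[.w_nl_n]$ comes from $w_{n+1}=w_nl_nw_n$ (Lemma \ref{seq1}), and the reverse inclusion is exactly the ``immediately followed by'' clause of Lemma \ref{seq-st1}. You simply spell out the length bookkeeping that the paper leaves implicit; there is no gap.
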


\begin{proof}
The cylinders $[.w_{n+1}b]$, $[.w_{n+1}c]$, and $[.w_{n+1}d]$ are clearly
disjoint.  Since $w_{n+1}=w_nl_nw_n$ (due to Lemma \ref{seq1}), each of
them is contained in $[.w_nl_n]$.  Lemma \ref{seq-st1} implies that the
union of the three cylinders is exactly $[.w_nl_n]$. 
\end{proof}

\begin{lemma}\label{cyl-st2}
$[.w_nl_{n+1}]=T^{2^n}([.w_{n+1}l_{n+1}])$ for all $n\ge1$.
\end{lemma}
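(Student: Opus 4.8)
The plan is to rewrite both sides as cylinders with letters prescribed on both sides of the decimal point, and then compare. By Lemma \ref{seq1} we have $w_{n+1}=w_nl_nw_n$, hence $w_{n+1}l_{n+1}=w_nl_nw_nl_{n+1}$, a word of length $2^{n+1}$, with $|w_nl_n|=2^n$. Using the relation between a cylinder and its shifts (namely $[u.v]=T^{|u|}([.uv])$, as exploited repeatedly in the proofs of Lemmas \ref{cyl2-pre} and \ref{cyl2}), the right-hand side becomes $T^{2^n}([.w_{n+1}l_{n+1}])=T^{2^n}([.w_nl_nw_nl_{n+1}])=[w_nl_n.w_nl_{n+1}]$. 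So it suffices to prove the equality of cylinders $[.w_nl_{n+1}]=[w_nl_n.w_nl_{n+1}]$.

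One inclusion, $[w_nl_n.w_nl_{n+1}]\subseteq[.w_nl_{n+1}]$, is immediate since prescribing additional letters only shrinks a cylinder. For the reverse inclusion, I would take $\om=\dots\om_{-1}\om_0.\om_1\om_2\dots$ in $[.w_nl_{n+1}]$, so that $\om_1\dots\om_{2^n}=w_nl_{n+1}$ (this word has length $(2^n-1)+1=2^n$). By definition of $\Om$, the finite word $\om_{-2^n+1}\dots\om_{2^n}$ occurs in $\xi$; its last $2^n$ letters form an occurrence of $w_nl_{n+1}$, which by Lemma \ref{seq-st2} is immediately preceded by $w_nl_n$. Hence $\om_{-2^n+1}\dots\om_0=w_nl_n$, i.e., $\om\in[w_nl_n.w_nl_{n+1}]$, which closes the argument.

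This proof is essentially bookkeeping, and I do not expect a genuine obstacle. The one point requiring care is that Lemma \ref{seq-st2} is stated for occurrences inside the one-sided word $\xi$, whereas elements of $\Om$ are bi-infinite; the transition is handled by the defining property that every finite subword of an element of $\Om$ appears in $\xi$. It is also worth noting in passing that $w_nl_{n+1}$ is never a prefix of $\xi$ (since $\xi_{2^n}=l_n\ne l_{n+1}$ by Lemma \ref{seq2}), so ``immediately preceded'' is meaningful for every occurrence — though this is already subsumed in the unconditional statement of Lemma \ref{seq-st2}.
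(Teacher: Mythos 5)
Your proof is correct and follows essentially the same route as the paper: both reduce the claim to the cylinder identity $[.w_nl_{n+1}]=[w_nl_n.w_nl_{n+1}]$ via Lemma \ref{seq-st2} and then shift by $2^n$ using $w_{n+1}=w_nl_nw_n$. You merely spell out the two inclusions and the passage from bi-infinite sequences to subwords of $\xi$, which the paper leaves implicit.
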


\begin{proof}
Lemma \ref{seq-st2} implies that $[.w_nl_{n+1}]=[w_nl_n.w_nl_{n+1}]$.
Since the length of the word $w_nl_n$ is $2^n$, we obtain that
$[.w_nl_{n+1}]=T^{2^n}([.w_nl_nw_nl_{n+1}])$.  It remains to notice that
$w_nl_nw_n=w_{n+1}$.
\end{proof}

\begin{lemma}\label{cyl-st3}
$[.w_{n+1}l_n]=T^{2^{n+1}}([.w_{n+2}l_n])\sqcup
T^{3\cdot2^n}([.w_{n+2}l_n])$ for all $n\ge1$.
\end{lemma}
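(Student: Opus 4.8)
The plan is to translate the claimed identity into statements about which words may precede, or follow, a given occurrence of $w_{n+1}l_n$ in $\xi$, in the spirit of the proofs of Lemmas \ref{cyl-st1} and \ref{cyl-st2}. By Lemma \ref{seq1} we have $w_{n+1}l_n=w_nl_nw_nl_n$, and substituting $w_{n+1}=w_nl_nw_n$ twice into $w_{n+2}=w_{n+1}l_{n+1}w_{n+1}$ gives $w_{n+2}l_n=w_nl_nw_nl_{n+1}w_nl_nw_nl_n$, a concatenation of four blocks $w_nl_n$, $w_nl_{n+1}$, $w_nl_n$, $w_nl_n$, each of length $2^n$. In particular the last $2^{n+1}$ letters of $w_{n+2}l_n$ spell $w_{n+1}l_n$, so $T^{2^{n+1}}([.w_{n+2}l_n])\subset[.w_{n+1}l_n]$ is immediate. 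For $T^{3\cdot2^n}([.w_{n+2}l_n])\subset[.w_{n+1}l_n]$, observe that $\om_{-3\cdot2^n+1}\dots\om_{2^n}=w_{n+2}l_n$ already yields $\om_1\dots\om_{2^n}=w_nl_n$ (the fourth block), so this inclusion reduces to showing $\om_{2^n+1}\dots\om_{2^{n+1}}=w_nl_n$ for such $\om$; I will extract this from the symmetry in Lemma \ref{seq-st3}.

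For the opposite inclusion, take $\om\in[.w_{n+1}l_n]$, so $\om_1\dots\om_{2^{n+1}}$ is an occurrence of $w_{n+1}l_n$. Since every finite subword of $\om$ occurs in $\xi$, Lemma \ref{seq-st3} applies and forces $\om_{-2^{n+1}+1}\dots\om_0$ to be either $w_nl_nw_nl_{n+1}$ or $w_nl_{n+1}w_nl_n$. In the first case $\om_{-2^{n+1}+1}\dots\om_{2^{n+1}}=w_nl_nw_nl_{n+1}w_nl_nw_nl_n=w_{n+2}l_n$, so $\om\in T^{2^{n+1}}([.w_{n+2}l_n])$. In the second case $\om_{-2^{n+1}+1}\dots\om_{-2^n}$ is an occurrence of $w_nl_{n+1}$, hence by Lemma \ref{seq-st2} immediately preceded by $w_nl_n$; together with $\om\in[.w_{n+1}l_n]$ this gives $\om_{-3\cdot2^n+1}\dots\om_{2^n}=w_nl_nw_nl_{n+1}w_nl_nw_nl_n=w_{n+2}l_n$, so $\om\in T^{3\cdot2^n}([.w_{n+2}l_n])$. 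Thus $[.w_{n+1}l_n]=T^{2^{n+1}}([.w_{n+2}l_n])\cup T^{3\cdot2^n}([.w_{n+2}l_n])$ once the deferred inclusion is established.

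To establish it, suppose $\om_{-3\cdot2^n+1}\dots\om_{2^n}=w_{n+2}l_n$. Its third and fourth blocks give that $\om_{-2^n+1}\dots\om_{2^n}=w_nl_nw_nl_n$ is an occurrence of $w_{n+1}l_n$, which by Lemma \ref{seq-st3} is immediately preceded and immediately followed by the \emph{same} word of length $2^{n+1}$. That preceding word is the concatenation of the first two blocks of $w_{n+2}l_n$, namely $w_nl_nw_nl_{n+1}$; hence the following word $\om_{2^n+1}\dots\om_{3\cdot2^n}$ equals $w_nl_nw_nl_{n+1}$ as well, and in particular $\om_{2^n+1}\dots\om_{2^{n+1}}=w_nl_n$, which is what was needed. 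For disjointness, note that $3\cdot2^n-2^{n+1}=2^n$ and $1\le2^n<2^{n+1}=2^{(n+2)-1}$, so Lemma \ref{cyl1} applied to the cylinder $[.w_{n+2}]$ shows that $[.w_{n+2}]$, hence also its subset $[.w_{n+2}l_n]$, is disjoint from $T^{2^n}([.w_{n+2}l_n])$; applying the homeomorphism $T^{2^{n+1}}$ to this relation shows $T^{2^{n+1}}([.w_{n+2}l_n])$ and $T^{3\cdot2^n}([.w_{n+2}l_n])$ are disjoint.

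I expect the deferred inclusion to be the crux: a naive ``slide letters across the dot'' computation expresses $T^{3\cdot2^n}([.w_{n+2}l_n])$ as a cylinder of dimension $5\cdot2^n$, and recognizing that this collapses to the dimension-$2^{n+2}$ cylinder $[.w_{n+2}l_n]$ is exactly where the full strength of Lemma \ref{seq-st3} — that an occurrence of $w_{n+1}l_n$ is followed by the same word it is preceded by, not merely one of two options — is needed. A minor recurring point throughout is that combinatorial facts about $\xi$ transfer to bi-infinite $\om\in\Om$ because every finite window of $\om$ is a subword of $\xi$.
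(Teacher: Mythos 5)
Your proof is correct and follows essentially the same route as the paper's: both split $[.w_{n+1}l_n]$ according to the two possible preceding words supplied by Lemma \ref{seq-st3}, identify the first piece directly as $T^{2^{n+1}}([.w_{n+2}l_n])$, and handle the second piece using Lemma \ref{seq-st2} together with the ``same word on both sides'' feature of Lemma \ref{seq-st3} --- exactly your deferred inclusion. The only cosmetic difference is disjointness, which the paper gets for free because the two pieces prescribe different letters ($l_{n+1}$ versus $l_n$) at position $0$, whereas you invoke Lemma \ref{cyl1}; both work.
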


\begin{proof}
Lemma \ref{seq-st3} implies that the cylinder
$[.w_{n+1}l_n]=[.w_nl_nw_nl_n]$ is the union of cylinders
$C_1=[w_nl_nw_nl_{n+1}.w_nl_nw_nl_n]$ and
$C_2=[w_nl_{n+1}w_nl_n.w_nl_nw_nl_n]$, which are disjoint since $l_{n+1}$ 
is always different from $l_n$.  Lemma \ref{seq-st2} further implies that
$C_2=[w_nl_nw_nl_{n+1}w_nl_n.w_nl_nw_nl_n]$.  Then it follows from Lemma
\ref{seq-st3} that $C_2=[w_nl_nw_nl_{n+1}w_nl_n.w_nl_n]$.

Notice that $w_nl_nw_nl_{n+1}w_nl_nw_nl_n=w_{n+1}l_{n+1}w_{n+1}l_n
=w_{n+2}l_n$.  Since the word $w_nl_nw_nl_{n+1}$ has length $2^{n+1}$ and
the word $w_nl_nw_nl_{n+1}w_nl_n$ has length $3\cdot 2^n$, we obtain that
$C_1=T^{2^{n+1}}([.w_{n+2}l_n])$ and $C_2=T^{3\cdot2^n}([.w_{n+2}l_n])$.
\end{proof}

\section{General topological full group}\label{tfg-gen}

We proceed to the study of the topological full group $[[T]]$.  Let us 
begin with some general properties of transformations $\Psi_{U,M,N}$ that 
hold for any homeomorphism $T:X\to X$ of a Cantor set $X$ onto itself.

\begin{lemma}\label{psi-gen1}
If $\Psi_{U,M,N}$ is well defined for a clopen set $U$ and integers $M,N$, 
$M<N$, then $\Psi_{T^K(U),M+J,N+J}$ is well defined for any $J,K\in\bZ$ and 
$\Psi_{T^K(U),M+J,N+J}=T^{J+K}\Psi_{U,M,N}T^{-J-K}$.
\end{lemma}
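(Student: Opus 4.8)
The plan is to verify the two assertions — that $\Psi_{T^K(U),M+J,N+J}$ is well defined, and that it equals $T^{J+K}\Psi_{U,M,N}T^{-J-K}$ — purely by unwinding the definitions; no deep idea is needed, so the content is careful bookkeeping of the three clopen pieces making up $\Psi_{U,M,N}$. Throughout I would abbreviate $L=J+K$.

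First I would dispose of well-definedness. By hypothesis the sets $T^M(U),T^{M+1}(U),\dots,T^N(U)$ are pairwise disjoint. Applying the bijection $T^L$ to all of them yields $T^{M+L}(U),\dots,T^{N+L}(U)$, which are again pairwise disjoint because a bijection carries disjoint sets to disjoint sets. Since $T^{M+J}\bigl(T^K(U)\bigr)=T^{M+L}(U)$, and similarly for every index from $M$ to $N$, this is exactly the disjointness condition needed for $\Psi_{T^K(U),M+J,N+J}$ to be defined.

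Next I would compute the conjugate pointwise. Fix $x\in X$ and put $y=T^{-L}(x)$, so that $\bigl(T^L\Psi_{U,M,N}T^{-L}\bigr)(x)=T^L\bigl(\Psi_{U,M,N}(y)\bigr)$. The point $y$ lies in exactly one of the three regions in the definition of $\Psi_{U,M,N}$, and applying $T^L$ translates membership: $y\in\bigcup_{i=M}^{N-1}T^i(U)$ iff $x\in\bigcup_{i=M}^{N-1}T^{i+L}(U)$; $y\in T^N(U)$ iff $x\in T^{N+L}(U)$; and the third case is the complement. In the first case $\Psi_{U,M,N}(y)=T(y)$, so $T^L\bigl(T(y)\bigr)=T\bigl(T^L(y)\bigr)=T(x)$; in the second $\Psi_{U,M,N}(y)=T^{M-N}(y)$, so $T^L\bigl(T^{M-N}(y)\bigr)=T^{M-N}(x)$; in the third $\Psi_{U,M,N}(y)=y$, so the value is $T^L(y)=x$. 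Comparing with the definition of $\Psi_{T^K(U),M+J,N+J}$, whose three regions are $\bigcup_{i=M}^{N-1}T^{i+J}\bigl(T^K(U)\bigr)=\bigcup_{i=M}^{N-1}T^{i+L}(U)$, then $T^{N+J}\bigl(T^K(U)\bigr)=T^{N+L}(U)$, then the complement, and whose middle value is $T^{(M+J)-(N+J)}(x)=T^{M-N}(x)$, one sees that the two maps agree at every $x$.

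The only thing to watch — and the closest thing to an obstacle, though it is really just a notational point — is the collapse of the two index shifts into one: conjugation by $T^J$ moves each window $T^i(U)$ to $T^{i+J}(U)$, while replacing $U$ by $T^K(U)$ shifts it by a further $K$, and these combine additively to the single exponent $L=J+K$ that appears on both sides. Once this is recorded, the identity $\Psi_{T^K(U),M+J,N+J}=T^{J+K}\Psi_{U,M,N}T^{-J-K}$ is immediate from the case check above.
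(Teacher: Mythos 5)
Your proof is correct and follows essentially the same route as the paper's: verify well-definedness by noting that the bijection $T$ preserves disjointness, then check the conjugation identity pointwise via the substitution $y=T^{-L}(x)$ and a three-case analysis. The only cosmetic difference is that you absorb both shifts into the single exponent $L=J+K$ at once, whereas the paper first conjugates by $T^J$ and then separately re-indexes by $K$ via $V=T^K(U)$; the content is identical.
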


\begin{proof}
Since $\Psi_{U,M,N}$ is well defined, the sets $T^M(U),T^{M+1}(U),\dots,
T^N(U)$ are pairwise disjoint.  Since $T$ is an invertible transformation,
it follows that for any $J\in\bZ$ the sets $T^{M+J}(U),T^{M+J+1}(U),\dots,
T^{N+J}(U)$ are also pairwise disjoint.  Hence $\Psi_{U,M+J,N+J}$ is
defined as well.  Suppose $x\in X$ and let $y=T^{-J}(x)$.  Then
$\Psi_{U,M+J,N+J}(x)=T^n(x)$ for a specific $n$ (which can be $0$, $1$, or 
$M-N$) if and only if $\Psi_{U,M,N}(y)=T^n(y)$.  It follows that
$\Psi_{U,M+J,N+J}=T^J\Psi_{U,M,N}T^{-J}$.

Given $K\in\bZ$, let $V=T^K(U)$.  Then $V$ is a clopen set and
$T^i(V)=T^{i+K}(U)$ for all $i\in\bZ$.  It follows that $\Psi_{V,M',N'}=
\Psi_{U,M'+K,N'+K}$ whenever one of these transformations is defined.  In
particular, $\Psi_{V,M+J,N+J}=\Psi_{U,M+J+K,N+J+K}$ for all $J\in\bZ$.  By
the above, $\Psi_{U,M+J+K,N+J+K}=T^{J+K}\Psi_{U,M,N}T^{-J-K}$.
\end{proof}

\begin{lemma}\label{psi-gen2}
Suppose $\Psi_{U,M,N}$ is well defined and $U=U_1\sqcup U_2\sqcup\ldots 
\sqcup U_k$, where $U_1,U_2,\dots,U_k$ are clopen sets.  Then 
transformations $\Psi_{U_i,M,N}$, $1\le i\le k$ are also well defined, they 
commute with one another, and $\Psi_{U,M,N}=\Psi_{U_1,M,N}\Psi_{U_2,M,N} 
\dots\Psi_{U_k,M,N}$.
\end{lemma}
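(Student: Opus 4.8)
The plan is to reduce everything to one structural observation: the transformations $\Psi_{U_i,M,N}$ act on pairwise disjoint ``towers'' of $X$.

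First I would check well-definedness. Since $\Psi_{U,M,N}$ is well defined, the sets $T^M(U),\dots,T^N(U)$ are pairwise disjoint; as $U_i\subset U$ and $T$ is injective, we have $T^j(U_i)\subset T^j(U)$ for every $j$, so $T^M(U_i),\dots,T^N(U_i)$ are pairwise disjoint as well, and $\Psi_{U_i,M,N}$ is defined. Next I would put $S_i=T^M(U_i)\cup\dots\cup T^N(U_i)$ and prove that $S_1,\dots,S_k$ are pairwise disjoint: $T^j(U_i)\cap T^{j'}(U_{i'})=\emptyset$ when $j\ne j'$ since it lies inside $T^j(U)\cap T^{j'}(U)=\emptyset$, and when $j=j'$ but $i\ne i'$ since $T^j$ is injective and $U_i\cap U_{i'}=\emptyset$. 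Directly from its defining formula, each $\Psi_{U_i,M,N}$ is the identity off $S_i$ and restricts to a bijection of $S_i$ onto itself.

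From here the conclusions are routine. For the commutativity I would use the general fact that if $f,g\colon X\to X$ each fix the complement of an invariant set and these two invariant sets are disjoint, then $fg=gf$ (check it separately on $S_i$, on $S_j$, and on the complement of $S_i\cup S_j$). Hence the $\Psi_{U_i,M,N}$ commute and the order in the product is immaterial. For the product formula I would note $S_1\cup\dots\cup S_k=T^M(U)\cup\dots\cup T^N(U)$ and argue pointwise: a point outside this set is fixed by every $\Psi_{U_i,M,N}$ and by $\Psi_{U,M,N}$; a point $x\in S_i$ (with $i$ unique) is fixed by every factor $\Psi_{U_{i'},M,N}$ with $i'\ne i$, as is the point $\Psi_{U_i,M,N}(x)\in S_i$, so the product sends $x$ to $\Psi_{U_i,M,N}(x)$, which equals $\Psi_{U,M,N}(x)$ by comparing the two defining formulas on $x\in T^j(U_i)\subset T^j(U)$ (both give $T(x)$ if $M\le j\le N-1$ and $T^{M-N}(x)$ if $j=N$).

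I do not expect a genuine obstacle; the argument is bookkeeping. The only place deserving care is the disjointness of the $S_i$, where the two sub-cases $j\ne j'$ and $j=j'$, $i\ne i'$ must both be addressed. If one prefers to avoid handling general $k$ at once, an equally good route is to prove the case $k=2$ and then induct, writing $U=U_1\sqcup(U_2\sqcup\dots\sqcup U_k)$.
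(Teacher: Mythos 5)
Your proposal is correct and follows essentially the same route as the paper: both rest on the observation that the sets $\widetilde U_i=T^M(U_i)\cup\dots\cup T^N(U_i)$ (your $S_i$) are pairwise disjoint, that each $\Psi_{U_i,M,N}$ is the identity off $\widetilde U_i$ and agrees with $\Psi_{U,M,N}$ on it, and that commutativity and the product formula follow from this disjoint-support decomposition. The extra detail you supply (the two-case check of disjointness and the pointwise verification of the product) is just a fuller writing-out of what the paper leaves implicit.
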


\begin{proof}
Since $\Psi_{U,M,N}$ is well defined, the sets $T^M(U),T^{M+1}(U),\dots,
T^N(U)$ are pairwise disjoint.  Since each $U_i$ is a subset of $U$, the
sets $T^M(U_i),T^{M+1}(U),\dots,T^N(U_i)$ are also pairwise disjoint.
Hence $\Psi_{U_i,M,N}$ is defined as well.  The transformation
$\Psi_{U_i,M,N}$ coincides with $\Psi_{U,M,N}$ on the set $\widetilde U_i
=T^M(U_i)\cup T^{M+1}(U_i)\cup\dots\cup T^N(U_i)$ and with the identity map
anywhere else.  Since $U=U_1\sqcup U_2\sqcup\ldots\sqcup U_k$, it follows
that $T^J(U)=T^J(U_1)\sqcup T^J(U_2)\sqcup\ldots\sqcup T^J(U_k)$ for all
$J\in\bZ$.  As a consequence, $T^M(U)\cup T^{M+1}(U)\cup\dots\cup T^N(U)
=\widetilde U_1\sqcup \widetilde U_2\sqcup\ldots\sqcup \widetilde U_k$.
This implies that transformations $\Psi_{U_1,M,N},\Psi_{U_2,M,N},\dots,
\Psi_{U_k,M,N}$ commute with one another and $\Psi_{U_1,M,N}\Psi_{U_2,M,N}
\dots\Psi_{U_k,M,N}=\Psi_{U,M,N}$.
\end{proof}

\begin{lemma}\label{psi-gen3}
If $\Psi_{U,M,N}$ is well defined and $N-M\ge2$, then
$\Psi_{U,M,N}=\Psi_{U,M,K}\Psi_{U,K,N}$ for any $K$, $M<K<N$.
\end{lemma}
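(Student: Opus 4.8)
The plan is to verify the identity pointwise, using the concrete description of the maps involved. Recall that $\Psi_{U,M,N}$ cyclically rotates the tower of pairwise disjoint clopen sets $T^M(U),T^{M+1}(U),\dots,T^N(U)$: it carries each floor $T^i(U)$ onto the next floor $T^{i+1}(U)$ by applying $T$ when $M\le i<N$, carries the top floor $T^N(U)$ onto the bottom floor $T^M(U)$ by applying $T^{M-N}$, and fixes every point outside the union $V=T^M(U)\cup T^{M+1}(U)\cup\dots\cup T^N(U)$. The hypothesis $N-M\ge2$ is exactly what makes an integer $K$ with $M<K<N$ available; the two sub-collections $T^M(U),\dots,T^K(U)$ and $T^K(U),\dots,T^N(U)$ are again pairwise disjoint (being sub-collections of a pairwise disjoint family), so $\Psi_{U,M,K}$ and $\Psi_{U,K,N}$ are well defined, and they overlap in the single floor $T^K(U)$. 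Morally, the desired identity mirrors the decomposition of the cyclic permutation $(M\;M{+}1\;\cdots\;N)$ of the floor indices as the product $(M\;M{+}1\;\cdots\;K)(K\;K{+}1\;\cdots\;N)$.

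Next I would fix the reading of the product: $\Psi_{U,M,K}\Psi_{U,K,N}$ means ``first apply $\Psi_{U,K,N}$, then apply $\Psi_{U,M,K}$.'' I would then take an arbitrary $x\in X$ and split into five cases according to the position of $x$ relative to the full tower: (i) $x\in T^i(U)$ with $M\le i<K$; (ii) $x\in T^K(U)$; (iii) $x\in T^i(U)$ with $K<i<N$; (iv) $x\in T^N(U)$; (v) $x\notin V$. In case (i), $\Psi_{U,K,N}$ fixes $x$ and $\Psi_{U,M,K}$ then sends it to $T^{i+1}(x)$, which agrees with $\Psi_{U,M,N}(x)$. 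In case (ii), $\Psi_{U,K,N}$ sends $x$ to $T^{K+1}(x)\in T^{K+1}(U)$ and $\Psi_{U,M,K}$ fixes that point, again giving $T^{K+1}(x)=\Psi_{U,M,N}(x)$. Case (iii) is analogous to (ii), and case (v) is immediate since the active set of each factor lies inside $V$. In case (iv), $\Psi_{U,K,N}$ sends $x$ to $T^{K-N}(x)\in T^K(U)$, and then $\Psi_{U,M,K}$ sends that to $T^{M-K}T^{K-N}(x)=T^{M-N}(x)$, which is precisely $\Psi_{U,M,N}(x)$. Hence the two transformations agree at every point.

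The one point that requires a little care — the main, albeit minor, obstacle — arises in cases (ii) and (iii): after the first factor $\Psi_{U,K,N}$ has moved $x$ up one floor, one must know that its image, which may land on one of the floors $T^{K+1}(U),\dots,T^N(U)$, is left untouched by the second factor $\Psi_{U,M,K}$. This holds because those floors are disjoint from the bottom sub-tower $T^M(U),\dots,T^K(U)$, which is part of the hypothesis that $\Psi_{U,M,N}$ is well defined, i.e.\ that the entire tower $T^M(U),\dots,T^N(U)$ is pairwise disjoint. With this observation recorded, the five-case check goes through routinely and establishes $\Psi_{U,M,K}\Psi_{U,K,N}=\Psi_{U,M,N}$.
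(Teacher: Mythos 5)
Your proof is correct and follows essentially the same route as the paper's: a pointwise, case-by-case verification over the floors of the tower, with the same key observation that after the first factor moves a point up one floor, that floor is disjoint from the sub-tower acted on by the second factor. One minor notational slip: in your cases (i) and (ii) the image of $x$ under the relevant factor is $T(x)$ (a point lying in the floor $T^{i+1}(U)$, resp.\ $T^{K+1}(U)$), not $T^{i+1}(x)$ or $T^{K+1}(x)$; the stated conclusions are unaffected.
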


\begin{proof}
Since $\Psi_{U,M,N}$ is well defined, the sets $T^M(U),T^{M+1}(U),\dots,
T^N(U)$ are pairwise disjoint.  It follows that transformations 
$\Psi_{U,M,K}$ and $\Psi_{U,K,N}$ are well defined for any $K$, $M<K<N$.  
We need to show that $\Psi_{U,M,N}(x)=\Psi_{U,M,K}(\Psi_{U,K,N}(x))$ for
all $x\in X$.  First consider the case $x\in T^i(U)$, where $M\le i\le K-1$.
Then $x\notin T^j(U)$ for $K\le j\le N$.  Hence $x$ is fixed by 
$\Psi_{U,K,N}$.  Consequently, $\Psi_{U,M,K}(\Psi_{U,K,N}(x))= 
\Psi_{U,M,K}(x)=T(x)$, which coincides with $\Psi_{U,M,N}(x)$.

Next consider the case $x\in T^i(U)$, where $K\le i\le N-1$.  In this case, 
$\Psi_{U,K,N}(x)=T(x)$.  Since $T(x)\in T^{i+1}(U)$ and $K+1\le i+1\le N$, 
it follows that $T(x)\notin T^j(U)$ for $M\le j\le K$.  Hence $T(x)$ is 
fixed by $\Psi_{U,M,K}$ so that $\Psi_{U,M,K}(\Psi_{U,K,N}(x))=T(x) 
=\Psi_{U,M,N}(x)$.

Now consider the case $x\in T^N(U)$.  In this case,
$\Psi_{U,K,N}(x)=T^{K-N}(x)$, which belongs to $T^K(U)$.  Then
$\Psi_{U,M,K}(\Psi_{U,K,N}(x))=\Psi_{U,M,K}(T^{K-N}(x))=T^{M-K}(T^{K-N}(x))
=T^{M-N}(x)$, which coincides with $\Psi_{U,M,N}(x)$.

Finally, if $x\notin T^i(U)$ for all $i$, $M\le i\le N$, then $x$ is fixed
by all three transformations.  In particular,
$\Psi_{U,M,K}(\Psi_{U,K,N}(x))=x=\Psi_{U,M,N}(x)$.
\end{proof}

\begin{lemma}\label{psi-gen4}
Suppose $\Psi_{U,M,K}$ and $\Psi_{V,K,N}$ are well defined.  If $T^i(V)\cap 
U=\emptyset$ for $1\le i\le N-M$, then $\Psi_{V,K,N}\Psi_{U,M,K}^{-1} 
\Psi_{V,K,N}^{-1}\Psi_{U,M,K}=\Psi_{U\cap V,K-1,K+1}$.
\end{lemma}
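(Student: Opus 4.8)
Set $W=U\cap V$.  The plan is to prove the identity in two stages: first reduce to the case $U=V=W$, then carry out an explicit computation inside a ``tower''.  The reduction rests on a handful of disjointness observations.  Writing $T^{j}(X)\cap T^{j'}(Y)=T^{j}\bigl(X\cap T^{j'-j}(Y)\bigr)$ and combining the well-definedness of $\Psi_{U,M,K}$ and $\Psi_{V,K,N}$ with the hypothesis $T^{i}(V)\cap U=\emptyset$ for $1\le i\le N-M$, one checks that the sets $T^{M}(W),T^{M+1}(W),\dots,T^{N}(W)$ are pairwise disjoint; in particular $\Psi_{W,K-1,K+1}$ is well defined, since $M\le K-1<K+1\le N$.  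The same computation shows that the set $\bigcup_{j=M}^{K}T^{j}(U\setminus W)$, which contains the support of $\Psi_{U\setminus W,M,K}$, is disjoint from the set $\bigcup_{i=K}^{N}T^{i}(V)$, which contains the support of $\Psi_{V,K,N}$, and likewise that $\bigcup_{i=K}^{N}T^{i}(V\setminus W)$ is disjoint from $\bigcup_{j=M}^{K}T^{j}(W)$.  Now split $\Psi_{U,M,K}=\Psi_{W,M,K}\Psi_{U\setminus W,M,K}$ and $\Psi_{V,K,N}=\Psi_{W,K,N}\Psi_{V\setminus W,K,N}$ into commuting factors using Lemma~\ref{psi-gen2}.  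Because $\Psi_{U\setminus W,M,K}$ commutes with $\Psi_{V,K,N}^{\pm1}$ and with $\Psi_{W,M,K}$, its two occurrences in the commutator cancel; after that $\Psi_{V\setminus W,K,N}$ commutes with all remaining terms and cancels as well.  So it suffices to prove
\[
\Psi_{W,K,N}\,\Psi_{W,M,K}^{-1}\,\Psi_{W,K,N}^{-1}\,\Psi_{W,M,K}=\Psi_{W,K-1,K+1}
\]
under the single hypothesis that $T^{M}(W),\dots,T^{N}(W)$ are pairwise disjoint.

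For the second stage, set $A_{j}=T^{j}(W)$ for $M\le j\le N$; these are the floors of a tower, and all four maps above fix the complement of $A_{M}\cup\dots\cup A_{N}$ pointwise.  I would observe that each of $\Psi_{W,M,K}^{\pm1}$ and $\Psi_{W,K,N}^{\pm1}$ is a \emph{floor permutation}: there is a permutation $\pi$ of $\{M,\dots,N\}$ for which the map carries $A_{j}$ onto $A_{\pi(j)}$ by the power $T^{\pi(j)-j}$ and is the identity off the tower.  The permutations attached to $\Psi_{W,M,K}$ and $\Psi_{W,K,N}$ are the cycles $\alpha=(M\ M{+}1\ \cdots\ K)$ and $\beta=(K\ K{+}1\ \cdots\ N)$.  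A composition of floor permutations is again a floor permutation, with underlying permutation the product of the underlying permutations in the same order, so the left-hand side above is the floor permutation attached to $\beta\alpha^{-1}\beta^{-1}\alpha$; a direct computation in the symmetric group yields $\beta\alpha^{-1}\beta^{-1}\alpha=(K{-}1\ K\ K{+}1)$.  Since $\Psi_{W,K-1,K+1}$ is precisely the floor permutation attached to this $3$-cycle, and a floor permutation is determined by its underlying permutation, the two sides coincide.

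The conceptual content is small and the work lies in the bookkeeping.  The hard part will be verifying the three disjointness assertions of the reduction step carefully over all the relevant index ranges; this is exactly where the hypothesis $T^{i}(V)\cap U=\emptyset$ enters, and it explains why the range must run up to $i=N-M$.  A minor additional point is to confirm that the floor-permutation descriptions and the identity $\beta\alpha^{-1}\beta^{-1}\alpha=(K{-}1\ K\ K{+}1)$ survive the degenerate cases $K=M+1$ and $N=K+1$, in which $\alpha$ or $\beta$ is only a transposition; a short inspection shows that nothing changes.
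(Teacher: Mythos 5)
Your proof is correct, and its first half coincides with the paper's: the paper also sets $W=U\cap V$, $Y=U\setminus W$, $Z=V\setminus W$, splits $\Psi_{U,M,K}=\Psi_{W,M,K}\Psi_{Y,M,K}$ and $\Psi_{V,K,N}=\Psi_{W,K,N}\Psi_{Z,K,N}$ via Lemma~\ref{psi-gen2}, and cancels the $Y$- and $Z$-factors from the commutator by exactly the disjointness-of-supports observations you list. Where you genuinely diverge is the endgame. The paper stays inside the $\Psi$-calculus: it writes $\Psi_{W,M,K}=L\,\Psi_{W,K-1,K}$ and $\Psi_{W,K,N}=\Psi_{W,K,K+1}R$ with $L=\Psi_{W,M,K-1}$, $R=\Psi_{W,K+1,N}$, cancels $L$ and $R$ by commutation, and then evaluates the commutator of the two involutions as $(\Psi_{W,K,K+1}\Psi_{W,K-1,K})^2=(\Psi_{W,K-1,K})\Psi_{W,K,K+1})^{-2}=\Psi_{W,K-1,K+1}^{-2}=\Psi_{W,K-1,K+1}$, using Lemma~\ref{psi-gen3} and the fact that $\Psi_{W,K-1,K+1}$ has order $3$. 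You instead view all factors as permutations of the floors $T^M(W),\dots,T^N(W)$ of a tower and compute the commutator of the cycles $\alpha=(M\ \cdots\ K)$ and $\beta=(K\ \cdots\ N)$ in the symmetric group; I checked that $\beta\alpha^{-1}\beta^{-1}\alpha=(K-1\ K\ K+1)$ (with permutations composed in the same right-to-left order as the maps --- the convention matters, since the inverse $3$-cycle would give $\Psi_{W,K-1,K+1}^{-1}$ instead), that this survives the degenerate cases $K=M+1$ and $N=K+1$, and that a floor permutation in your sense is indeed determined by its underlying permutation. Your route makes the emergence of the $3$-cycle structurally transparent and needs no clever identity; the paper's route avoids introducing the floor-permutation formalism and reuses only the lemmas already proved. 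Both are complete and correct.
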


\begin{proof}
Since $\Psi_{U,M,K}$ is well defined, the sets $T^M(U),T^{M+1}(U),\dots,
T^K(U)$ are pairwise disjoint.  Since $\Psi_{V,K,N}$ is well defined, the
sets $T^K(V),T^{K+1}(V),\dots,T^N(V)$ are pairwise disjoint.  Further,
$T^i(U)\cap T^j(V)=T^i(U\cap T^{j-i}(V))$ for all $i,j\in\bZ$.  Therefore
$T^i(U)$ is disjoint from $T^j(V)$ whenever $1\le j-i\le N-M$.  In
particular, the two sets are disjoint if $M\le i\le K\le j\le N$ and at
least one of the numbers $i$ and $j$ is different from $K$.  It follows
that sets $T^M(U),T^{M+1}(U),\dots,T^{K-1}(U),T^K(U)\cup T^K(V)=T^K(U\cup
V)$, $T^{K+1}(V),\dots,T^{N-1}(V),T^N(V)$ are pairwise disjoint.

Let $W=U\cap V$, $Y=U\setminus W$, and $Z=V\setminus W$.  Then $W$, $Y$,
and $Z$ are clopen sets.  We have $U=W\sqcup Y$, $V=W\sqcup Z$, and $U\cup
V=W\sqcup Y\sqcup Z$.  By Lemma \ref{psi-gen2}, $\Psi_{U,M,K}=\Psi_{W,M,K}
\Psi_{Y,M,K}$ and $\Psi_{V,K,N}=\Psi_{W,K,N}\Psi_{Z,K,N}$.  The
transformation $\Psi_{Y,M,K}$ moves points only within the set $\widetilde
Y=T^M(Y)\cup T^{M+1}(Y)\cup\dots\cup T^K(Y)$.  Likewise, $\Psi_{Z,K,N}$
moves points only within the set $\widetilde Z=T^K(Z)\cup T^{K+1}(Z)\cup
\dots\cup T^N(Z)$.  The transformations $\Psi_{W,M,K}$ and $\Psi_{W,K,N}$
do not move points outside of the set $\widetilde W=T^M(W)\cup T^{M+1}(W)
\cup\dots\cup T^N(W)$.  Note that $T^i(U)=T^i(W)\sqcup T^i(Y)$ for $M\le
i\le K-1$, $T^i(V)=T^i(W)\sqcup T^i(Z)$ for $K+1\le i\le N$, and $T^K(U\cup
V)=T^K(W)\sqcup T^K(Y)\sqcup T^K(Z)$.  It follows that the sets $\widetilde
W$, $\widetilde Y$, and $\widetilde Z$ are pairwise disjoint.  This implies
that the transformations $\Psi_{Y,M,K}$ and $\Psi_{Z,K,N}$ commute with
$\Psi_{W,M,K}$, $\Psi_{W,K,N}$, and with each other.  Then
$$
\Psi_{V,K,N}\Psi_{U,M,K}^{-1}\Psi_{V,K,N}^{-1}\Psi_{U,M,K}=
$$ $$
=(\Psi_{W,K,N}\Psi_{Z,K,N}) (\Psi_{W,M,K}\Psi_{Y,M,K})^{-1}
(\Psi_{W,K,N}\Psi_{Z,K,N})^{-1} (\Psi_{W,M,K}\Psi_{Y,M,K})
$$ $$
=\Psi_{W,K,N}\Psi_{Z,K,N} \Psi_{Y,M,K}^{-1}\Psi_{W,M,K}^{-1}
\Psi_{Z,K,N}^{-1}\Psi_{W,K,N}^{-1} \Psi_{W,M,K}\Psi_{Y,M,K}
$$ $$
=\Psi_{W,K,N}\Psi_{W,M,K}^{-1} (\Psi_{Z,K,N}\Psi_{Y,M,K}^{-1}
\Psi_{Z,K,N}^{-1}\Psi_{Y,M,K}) \Psi_{W,K,N}^{-1} \Psi_{W,M,K}
$$ $$
=\Psi_{W,K,N}\Psi_{W,M,K}^{-1}\Psi_{W,K,N}^{-1} \Psi_{W,M,K}.
$$

Let $L=\Psi_{W,M,K-1}$ if $M<K-1$ and let $L$ be the identity map
otherwise.  Let $R=\Psi_{W,K+1,N}$ if $K+1<N$ and let $R$ be the identity
map otherwise.  It follows from Lemma \ref{psi-gen3} that $\Psi_{W,M,K}=
L\Psi_{W,K-1,K}$ and $\Psi_{W,K,N}=\Psi_{W,K,K+1}R$.  The transformation 
$L$ fixes all points in the set $T^K(W)\cup T^{K+1}(W)\cup\dots\cup 
T^N(W)$, which implies that $L$ commutes with $\Psi_{W,K,K+1}$ and $R$.  
Similarly, $R$ fixes all points in the set $T^M(W)\cup T^{M+1}(W)\cup\dots 
\cup T^K(W)$, which implies that $R$ commutes with $\Psi_{W,K-1,K}$ and 
$L$.  Then
$$
\Psi_{W,K,N}\Psi_{W,M,K}^{-1}\Psi_{W,K,N}^{-1}\Psi_{W,M,K}
=(\Psi_{W,K,K+1}R) (L\Psi_{W,K-1,K})^{-1} (\Psi_{W,K,K+1}R)^{-1} 
(L\Psi_{W,K-1,K})
$$ $$
=\Psi_{W,K,K+1}R \Psi_{W,K-1,K}^{-1}L^{-1} R^{-1}\Psi_{W,K,K+1}^{-1}
L\Psi_{W,K-1,K}
$$ $$
=\Psi_{W,K,K+1} \Psi_{W,K-1,K}^{-1} (RL^{-1}R^{-1}L) \Psi_{W,K,K+1}^{-1}
\Psi_{W,K-1,K}
$$ $$
=\Psi_{W,K,K+1}\Psi_{W,K-1,K}^{-1}\Psi_{W,K,K+1}^{-1}\Psi_{W,K-1,K}.
$$
Since $\Psi_{W,K-1,K}$ and $\Psi_{W,K,K+1}$ are involutions, we obtain that
$$
\Psi_{W,K,K+1}\Psi_{W,K-1,K}^{-1}\Psi_{W,K,K+1}^{-1}\Psi_{W,K-1,K}
=(\Psi_{W,K,K+1}\Psi_{W,K-1,K})^2=(\Psi_{W,K-1,K}\Psi_{W,K,K+1})^{-2}.
$$
It follows from the above that sets $T^M(W),\dots,T^{K-1}(W),T^K(W), 
T^{K+1}(W),\dots,T^N(W)$ are pairwise disjoint.  In particular, the 
transformation $\Psi_{W,K-1,K+1}$ is well defined.  We have 
$\Psi_{W,K-1,K}\Psi_{W,K,K+1}=\Psi_{W,K-1,K+1}$ due to Lemma 
\ref{psi-gen3} and $\Psi^{-2}_{W,K-1,K+1}=\Psi_{W,K-1,K+1}$ since 
$\Psi_{W,K-1,K+1}$ has order $3$.
\end{proof}

\section{Topological full group of the substitution subshift}\label{tfg}

Now we restrict our attention to the substitution subshift $T:\Om\to\Om$.
Let $G$ be the subgroup of $[[T]]$ generated by transformations $T$,
$\de_{[.b]}$, $\de_{[.d]}$, and $\de_{[.acacac]}$.  For any $n\ge1$ let
$G_n$ be the subgroup of $[[T]]$ generated by $\de_{[.w_nb]}$,
$\de_{[.w_nc]}$, $\de_{[.w_nd]}$, and $T$.

\begin{lemma}\label{psi-de-3}
$G_3=G$.
\end{lemma}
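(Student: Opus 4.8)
The goal is to show that $G_3 = G$, where $G_3$ is generated by $\de_{[.w_3b]}, \de_{[.w_3c]}, \de_{[.w_3d]}, T$ (recall $w_3 = acabaca$), and $G$ is generated by $T, \de_{[.b]}, \de_{[.d]}, \de_{[.acacac]}$. Since both groups contain $T$, it suffices to prove the two containments among the remaining generators: first that $\de_{[.b]}, \de_{[.d]}, \de_{[.acacac]} \in G_3$, and conversely that $\de_{[.w_3b]}, \de_{[.w_3c]}, \de_{[.w_3d]} \in G$.

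For the containment $G \subseteq G_3$, the plan is to express each of $\de_{[.b]}, \de_{[.d]}$ and $\de_{[.acacac]}$ in terms of cylinders of dimension $8$. Note $[.b] = [.w_1l_1']$-type cylinders need to be refined: applying Lemma~\ref{cyl-st1} twice lets me write $[.b], [.d]$ as finite disjoint unions of cylinders of dimension $8$ (dimension $2^3$), since $[.l]$ for $l \in \{b,c,d\}$ splits according to the longer context. Each such dimension-$8$ cylinder is, by Lemma~\ref{cyl2}, of the form $T^N([.w_3l])$ for some $l \in \{b,c,d\}$ and $N \in \bZ$. Then Lemma~\ref{psi-gen1} gives $\de_{T^N([.w_3l])} = T^N \de_{[.w_3l]} T^{-N}$, which lies in $G_3$, and Lemma~\ref{psi-gen2} assembles the disjoint pieces back into $\de_{[.b]}$ and $\de_{[.d]}$ (one must check the relevant translates remain disjoint so $\de_U$ over the union equals the product of the $\de_{U_i}$). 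The same argument handles $\de_{[.acacac]}$: the cylinder $[.acacac]$ of dimension $6$ splits into dimension-$8$ cylinders, each a $T$-translate of some $[.w_3l]$. So $G \subseteq G_3$.

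For the reverse containment $G_3 \subseteq G$, I must show $\de_{[.w_3b]}, \de_{[.w_3c]}, \de_{[.w_3d]} \in G$. Here the natural tool is Lemma~\ref{psi-gen4}, which produces $\Psi_{U \cap V, K-1, K+1}$ (an order-$3$ element, hence related to $\tau$'s, but more usefully it lets one \emph{shrink} the "base" cylinder of a $\de$ or $\tau$ by intersecting). The strategy is to build $\de_{[.w_3l]}$ from the smaller generators $\de_{[.b]}, \de_{[.d]}, \de_{[.acacac]}$ (and $\de_{[.c]} = \de_{[.b]}\de_{[.d]}$ conjugated — actually one first needs $\de_{[.a]}$ and $\de_{[.c]}$ in $G$, which the introduction asserts via explicit formulas). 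Concretely: $[.w_3b] = [.acabacab]$ sits inside $[.a] \cap T^{?}[.acacac]$-type intersections after suitable $T$-conjugation, so repeated application of Lemma~\ref{psi-gen4} (together with Lemmas~\ref{psi-gen1}–\ref{psi-gen3} for bookkeeping) whittles a $\de$ on a short cylinder down to a $\de$ on $[.w_3l]$. The combinatorial input is Lemmas~\ref{seq-st1}–\ref{cyl2-pre}, which control exactly which length-$8$ words occur and how their cylinders coincide with shifted cylinders of shorter words.

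The main obstacle is the reverse direction $G_3 \subseteq G$: producing $\de$'s on the long cylinders $[.w_3l]$ from $\de$'s on short ones. The forward direction is essentially automatic once Lemma~\ref{cyl2} is in hand — it is "decompose and translate." But going the other way requires genuinely using the commutator-type identity of Lemma~\ref{psi-gen4} to \emph{localize}, and one must carefully choose the sets $U, V$ and the exponents $M, K, N$ so that the disjointness hypothesis $T^i(V) \cap U = \emptyset$ for $1 \le i \le N-M$ holds — this is where Lemma~\ref{cyl1} (disjointness of $[.w_n]$ from its small iterates) and the detailed follower/predecessor structure of Lemmas~\ref{seq-st1}–\ref{seq-st3} are needed. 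I expect the argument to first establish $\de_{[.a]}, \de_{[.c]} \in G$ via the explicit words given in the introduction (a direct verification), then bootstrap up from dimension-$2$ cylinders to dimension-$8$ cylinders one conjugation/commutator at a time.
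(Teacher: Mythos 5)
Your forward direction ($G\subseteq G_3$) is sound and matches the spirit of the paper's argument: decompose each of $[.b]$, $[.d]$, $[.acacac]$ into dimension-$8$ cylinders, apply Lemma~\ref{cyl2} and Lemmas~\ref{psi-gen1}--\ref{psi-gen2}. (The paper is more economical: it observes via Lemma~\ref{seq2} that $[.d]=T^7([.w_3d])$ and, via Lemma~\ref{seq-st3}, that $[.acacac]=T^4([.w_3c])$ are \emph{single} translates, and that $[.b]=T^3([.w_2b])$ with $[.w_2b]=[.w_3b]\sqcup[.w_3c]\sqcup[.w_3d]$ by Lemma~\ref{cyl-st1}; no appeal to the general Lemma~\ref{cyl2} is needed.)

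The reverse direction is where your plan has a genuine gap. You propose to manufacture $\de_{[.w_3l]}$ by repeated use of the commutator identity of Lemma~\ref{psi-gen4}, but that lemma outputs $\Psi_{U\cap V,K-1,K+1}$, an element of order $3$ (a $\tau$), not an involution $\de$; you give no mechanism for converting the resulting $\tau$'s back into the $\de_{[.w_3l]}$ you need. (The paper does have such a mechanism --- Lemma~\ref{psi-de-tau}, exploiting the splitting of Lemma~\ref{cyl-st3} --- but it is part of the later induction $G_n=G_3$, and invoking that machinery here would be a long detour that your sketch does not actually carry out.) Your fallback, ``first establish $\de_{[.a]},\de_{[.c]}\in G$ via the explicit words given in the introduction,'' is circular: those formulas are \emph{derived} in the paper as a follow-up to the proof of this very lemma. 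The correct and much shorter route is to notice that the same three cylinder identities used in the forward direction can simply be inverted: $\de_{[.w_3d]}=T^{-7}\de_{[.d]}T^7$ and $\de_{[.w_3c]}=T^{-4}\de_{[.acacac]}T^4$ land two of the three generators of $G_3$ in $G$ immediately, and the third is recovered from the product formula $\de_{[.w_2b]}=\de_{[.w_3b]}\de_{[.w_3c]}\de_{[.w_3d]}$ (Lemmas~\ref{cyl-st1} and~\ref{psi-gen2}) together with $\de_{[.w_2b]}=T^{-3}\de_{[.b]}T^3$, since the three factors are commuting involutions. No commutator identity is needed anywhere in this lemma.
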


\begin{proof}
First we show that the group $G_3$ contains $\de_{[.w_2b]}$.  By Lemma
\ref{cyl-st1}, $[.w_2b]=[.w_3b]\sqcup[.w_3c]\sqcup[.w_3d]$.  Then Lemma
\ref{psi-gen2} implies that $\de_{[.w_2b]}=\de_{[.w_3b]}\de_{[.w_3c]}
\de_{[.w_3d]}$.

By Lemma \ref{seq2}, $\xi_i=a$ if $i$ is odd, $\xi_i=c$ if $i$ is even but
not divisible by $4$, and $\xi_i=b$ if $i$ is divisible by $4$ but not by
$8$.  It follows that every occurrence of the letter $b$ in $\xi$ is
immediately preceded by $aca$ while every occurrence of $d$ is preceded by
$acabaca$.  As a consequence, $[.b]=[aca.b]=T^3([.w_2b])$ and
$[.d]=[acabaca.d]=T^7([.w_3d])$.  Besides, Lemma \ref{seq-st3} implies that 
$[.acacac]=[acab.acacacab]=[acab.acac]=T^4([.w_3c])$.  By Lemma
\ref{psi-gen1}, $\de_{[.b]}=T^3\de_{[.w_2b]}T^{-3}$, $\de_{[.d]}
=T^7\de_{[.w_3d]}T^{-7}$, and $\de_{[.acacac]}=T^4\de_{[.w_3c]}T^{-4}$.
Therefore all generators of the group $G$ belong to $G_3$ so that $G\subset
G_3$.

Conversely, it follows from the above that $\de_{[.w_2b]}
=T^{-3}\de_{[.b]}T^3$, $\de_{[.w_3c]}=T^{-4}\de_{[.acacac]}T^4$,
$\de_{[.w_3d]}=T^{-7}\de_{[.d]}T^7$, and
\begin{eqnarray*}
\de_{[.w_3b]}&=&\de_{[.w_2b]}\de_{[.w_3d]}^{-1}\de_{[.w_3c]}^{-1}
=\de_{[.w_2b]}\de_{[.w_3d]}\de_{[.w_3c]}\\
&=&(T^{-3}\de_{[.b]}T^3)(T^{-7}\de_{[.d]}T^7)(T^{-4}\de_{[.acacac]}T^4)
=T^{-3}\de_{[.b]}T^{-4}\de_{[.d]}T^3\de_{[.acacac]}T^4.
\end{eqnarray*}
Therefore all generators of the group $G_3$ belong to $G$ so that
$G_3\subset G$.
\end{proof}

As a follow-up to the previous proof, let us derive the formulas for
$\de_{[.a]}$ and $\de_{[.c]}$.  We begin with some auxiliary formulas.  By
Lemma \ref{cyl-st3}, $[.acac]=T^4([.w_3c])\sqcup T^6([.w_3c])$.  Then
Lemmas \ref{psi-gen1} and \ref{psi-gen2} imply that
$$
\de_{[.acac]}=(T^4\de_{[.w_3c]}T^{-4})(T^6\de_{[.w_3c]}T^{-6})
=T^4\de_{[.w_3c]}T^2\de_{[.w_3c]}T^{-6}.
$$
Since $\de_{[.w_3c]}=T^{-4}\de_{[.acacac]}T^4$, we obtain that
$\de_{[.acac]}=\de_{[.acacac]}T^2\de_{[.acacac]}T^{-2}$.  Further,
$[.acad]=T^4([.w_3d])$ due to Lemma \ref{cyl-st2}.  Hence
$\de_{[.acad]}=T^4\de_{[.w_3d]}T^{-4}=T^4(T^{-7}\de_{[.d]}T^7)T^{-4}
=T^{-3}\de_{[.d]}T^3$.  Next, $[.ac]=[.acab]\sqcup[.acac]\sqcup[.acad]$ due
to Lemma \ref{cyl-st2}.  By Lemma \ref{psi-gen2},
\begin{eqnarray*}
\de_{[.ac]}&=&\de_{[.acab]}\de_{[.acad]}\de_{[.acac]}
=(T^{-3}\de_{[.b]}T^3)(T^{-3}\de_{[.d]}T^3)
(\de_{[.acacac]}T^2\de_{[.acacac]}T^{-2})\\
&=&T^{-3}\de_{[.b]}\de_{[.d]}T^3\de_{[.acacac]}T^2\de_{[.acacac]}T^{-2}.
\end{eqnarray*}
Finally, $[.c]=[a.c]=T([.ac])$ so that
$$
\de_{[.c]}=T\de_{[.ac]}T^{-1}=T^{-2}\de_{[.b]}\de_{[.d]}T^3\de_{[.acacac]}
T^2\de_{[.acacac]}T^{-3}.
$$
Since $[.a]=T^{-1}([a.])$ and $[a.]=[.b]\sqcup[.c]\sqcup[.d]$, it follows
from Lemma \ref{psi-gen2} that $\de_{[a.]}=\de_{[.b]}\de_{[.d]}\de_{[.c]}$
and then from Lemma \ref{psi-gen1} that
$$
\de_{[.a]}=T^{-1}\de_{[a.]}T=T^{-1}\de_{[.b]}\de_{[.d]}\de_{[.c]}T
=T^{-1}\de_{[.b]}\de_{[.d]}T^{-2}\de_{[.b]}\de_{[.d]}T^3\de_{[.acacac]}
T^2\de_{[.acacac]}T^{-2}.
$$

\begin{lemma}\label{psi-de}
Any given transformation of the form $\de_U$ is contained in the group
$G_n$ for $n$ large enough.
\end{lemma}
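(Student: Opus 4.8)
The plan is to reduce the statement to cylinders and then feed it into Lemma~\ref{cyl2}, which is exactly the place where all the combinatorial work has been done. Fix a clopen set $U$ for which $\de_U$ is well defined. Since any clopen subset of $\Om$ is a disjoint union of cylinders of a common dimension once that dimension is large enough (and a cylinder of dimension $2^n$ further splits into cylinders of dimension $2^{n+1}$), I can choose $n$ so large that $U=C_1\sqcup C_2\sqcup\dots\sqcup C_k$ with each $C_i$ a cylinder of dimension $2^n$; any larger $n$ works as well. Applying Lemma~\ref{psi-gen2} to $\de_U=\Psi_{U,0,1}$ then yields $\de_U=\de_{C_1}\de_{C_2}\cdots\de_{C_k}$, so it suffices to prove that each factor $\de_{C_i}$ lies in $G_n$.

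Next I would treat a single nonempty cylinder $C$ of dimension $2^n$. By Lemma~\ref{cyl2}, $C=T^N([.w_nl])$ for some $l\in\{b,c,d\}$ and $N\in\bZ$. The cylinder $[.w_nl]$ is nonempty, being a translate of $C$, and since $w_nl$ is a nonempty word, $[.w_nl]$ is disjoint from $T([.w_nl])$, so $\de_{[.w_nl]}$ is well defined; by construction it is one of the generators of $G_n$. Lemma~\ref{psi-gen1} then gives $\de_C=\Psi_{T^N([.w_nl]),0,1}=T^N\,\de_{[.w_nl]}\,T^{-N}$, which belongs to $G_n$ because $T\in G_n$. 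If some $C_i$ happens to be empty, then $\de_{C_i}$ is the identity and lies in $G_n$ trivially. Combining this with the reduction of the previous paragraph, $\de_U=\de_{C_1}\cdots\de_{C_k}\in G_n$, as claimed. (If one prefers, one may further invoke Lemma~\ref{psi-de-3} to rephrase this for $G$ itself when $n$ is taken large relative to $3$, but that is not needed here.)

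There is essentially no remaining obstacle: the genuinely hard combinatorics — that an arbitrary cylinder of dimension $2^n$ is a $T$-translate of one of the three distinguished cylinders $[.w_nl]$ — is precisely the content of Lemma~\ref{cyl2}, and the general transformation identities of Section~\ref{tfg-gen} do the bookkeeping. The only points that require a little care are (i) arranging $U$ to be a disjoint union of cylinders whose common dimension is a power of $2$, so that Lemma~\ref{cyl2} applies verbatim, and (ii) verifying that each $\de_{[.w_nl]}$ is well defined so that the conjugation formula of Lemma~\ref{psi-gen1} is legitimate; both are immediate from the elementary properties of cylinders recorded at the start of Section~\ref{seq}.
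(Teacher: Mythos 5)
Your proposal is correct and follows essentially the same route as the paper: decompose $U$ into disjoint nonempty cylinders of dimension $2^n$, factor $\de_U$ via Lemma~\ref{psi-gen2}, and use Lemma~\ref{cyl2} together with the conjugation identity of Lemma~\ref{psi-gen1} to place each factor in $G_n$. The extra remarks on well-definedness and empty cylinders are harmless additions to what is otherwise the paper's own argument.
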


\begin{proof}
If $U$ is an empty set, then $\de_U$ is the identity map.  Now suppose
$U\subset\Om$ is a nonempty clopen set.  Then there exists $n_0\ge1$ such
that for any $n\ge n_0$ the set $U$ can be represented as a union
$U=C_1\cup C_2\cup\dots\cup C_s$, where each $C_i$ is of the form $[u.w]$
for some words $u,w$ of length $2^{n-1}$.  We can assume that the cylinders
$C_1,C_2,\dots,C_s$ are nonempty and distinct.  Then $U=C_1\sqcup C_2\sqcup
\dots\sqcup C_s$.  If $\de_U$ is well defined, then
$\de_U=\de_{C_1}\de_{C_2}\dots\de_{C_s}$ due to Lemma \ref{psi-gen2}.
Since each $C_i$ is a nonempty cylinder of dimension $2^n$, Lemma
\ref{cyl2} implies that $C_i=T^N([.w_nl])$ for some $l\in\{b,c,d\}$ and
$N\in\bZ$.  Then $\de_{C_i}=T^N\de_{[.w_nl]}T^{-N}$ due to Lemma
\ref{psi-gen1}.  In particular, each $\de_{C_i}$ belongs to the group
$G_n$.  It follows that $\de_U\in G_n$ as well.
\end{proof}

For any $n\ge3$ let $H_n$ be the subgroup of $[[T]]$ generated by
$\tau_{[.w_nb]}$, $\tau_{[.w_nc]}$, $\tau_{[.w_nd]}$, and $T$.  The
restriction $n\ge3$ is necessary since $\tau_{[.ac]}$ and $\tau_{[.acac]}$
are not defined.  If $n\ge3$ then the cylinders $[.w_nb]$, $[.w_nc]$, and 
$[.w_nd]$ are contained in $[.acab]$.  Since sets $[.acab]$, 
$T([.acab])=[a.cab]$, and $T^2([.acab])=[ac.ab]$ are disjoint from one 
another, the transformation $\tau_{[.acab]}$ is well defined and so are the 
generators of the group $H_n$.

\begin{lemma}\label{psi-de-tau}
$\de_{[.w_{n+1}l_n]}\in H_{n+2}$ for all $n\ge1$.
\end{lemma}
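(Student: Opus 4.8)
The plan is to strip off the two outermost layers of substitution structure with Lemma~\ref{cyl-st3}, and then to recognise the remaining element as a telescoping product of $T$-conjugates of $\tau_V$.

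Set $V=[.w_{n+2}l_n]$.  Since $l_n\in\{b,c,d\}$, the map $\tau_V=\tau_{[.w_{n+2}l_n]}$ is one of the three defining generators of $H_{n+2}$, so $\tau_V\in H_{n+2}$; of course $T\in H_{n+2}$ as well.  Lemma~\ref{cyl-st3} gives $[.w_{n+1}l_n]=T^{2^{n+1}}(V)\sqcup T^{3\cdot2^n}(V)$, so Lemma~\ref{psi-gen2} (with $M=0$, $N=1$) yields
$$
\de_{[.w_{n+1}l_n]}=\de_{T^{2^{n+1}}(V)}\,\de_{T^{3\cdot2^n}(V)}.
$$
Thus it suffices to place the right-hand side in $H_{n+2}$.

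Now I would bring in the auxiliary elements $\tau_{T^i(V)}:=\Psi_{V,i,i+2}$, $i\in\bZ$.  Since $V\subset[.w_{n+2}]$, Lemma~\ref{cyl1} shows that $V$, $TV$, $T^2V$ are pairwise disjoint, hence so are $T^i(V),T^{i+1}(V),T^{i+2}(V)$ for every $i$, and all these $\tau_{T^i(V)}$ are well defined.  By Lemma~\ref{psi-gen1} one has $\tau_{T^i(V)}=T^i\tau_V T^{-i}\in H_{n+2}$, and by Lemma~\ref{psi-gen3} (with $M=i$, $K=i+1$, $N=i+2$) followed by Lemma~\ref{psi-gen1},
$$
\tau_{T^i(V)}=\Psi_{V,i,i+1}\Psi_{V,i+1,i+2}=\de_{T^i(V)}\,\de_{T^{i+1}(V)}.
$$
Multiplying these identities for $i=2^{n+1},2^{n+1}+1,\dots,3\cdot2^n-1$ and cancelling the interior factors $\de_{T^i(V)}$ in adjacent pairs, the product telescopes to
$$
\prod_{i=2^{n+1}}^{3\cdot2^n-1}\tau_{T^i(V)}=\de_{T^{2^{n+1}}(V)}\,\de_{T^{3\cdot2^n}(V)}=\de_{[.w_{n+1}l_n]}.
$$
Since every factor $\tau_{T^i(V)}$ lies in $H_{n+2}$, so does $\de_{[.w_{n+1}l_n]}$.

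The whole argument is essentially a bookkeeping exercise with the $\Psi$-calculus of Section~\ref{tfg-gen}; the only genuinely delicate points are (i) checking that every $\Psi_{V,i,i+1}$ and $\Psi_{V,i,i+2}$ used above is well defined, which reduces to the pairwise disjointness of $V$, $TV$, $T^2V$ supplied by Lemma~\ref{cyl1}, and (ii) keeping track of the order in the telescoping product so that the two surviving end factors are precisely $\de_{T^{2^{n+1}}(V)}$ and $\de_{T^{3\cdot2^n}(V)}$ --- exactly the two cylinders produced by Lemma~\ref{cyl-st3}, using $3\cdot2^n=2^{n+1}+2^n$.  (For $n=1$ the product has just the two terms $\tau_{T^4(V)}\tau_{T^5(V)}$, which is a convenient consistency check.)
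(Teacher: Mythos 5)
Your proof is correct and is essentially the paper's own argument: both start from the decomposition $[.w_{n+1}l_n]=T^{2^{n+1}}(V)\sqcup T^{3\cdot2^n}(V)$ of Lemma \ref{cyl-st3} and identify $\de_{T^{2^{n+1}}(V)}\de_{T^{3\cdot2^n}(V)}$ with a telescoping product of the conjugates $T^i\tau_V T^{-i}$, $2^{n+1}\le i<3\cdot2^n$; the paper merely runs the telescoping in the opposite direction (inserting squares of the involutions $\Psi_{V,i,i+1}$ rather than cancelling them). Your well-definedness check via Lemma \ref{cyl1} is also fine (the paper instead uses the containment of $[.w_{n+2}l_n]$ in $[.acab]$ noted before the lemma), so there is nothing to fix.
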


\begin{proof}
By Lemma \ref{cyl-st3}, $[.w_{n+1}l_n]=T^{N_n}(U_n)\sqcup T^{M_n}(U_n)$,
where $U_n=[.w_{n+2}l_n]$, $N_n=2^{n+1}$, and $M_n=3\cdot2^n$.  Note that
$M_n-N_n\ge2$.  It follows from Lemmas \ref{psi-gen1} and \ref{psi-gen2}
that $\de_{[.w_{n+1}l_n]}=\Psi_{U_n,N_n,N_n+1}\Psi_{U_n,M_n,M_n+1}$.  Since
$\Psi_{U_n,N,N+1}$ is an involution for all $N\in\bZ$, we obtain
$$
\de_{[.w_{n+1}l_n]}=\Psi_{U_n,N_n,N_n+1}\Psi^2_{U_n,N_n+1,N_n+2}
\Psi^2_{U_n,N_n+2,N_n+3}\dots\Psi^2_{U_n,M_n-1,M_n}\Psi_{U_n,M_n,M_n+1}
$$ $$
=(\Psi_{U_n,N_n,N_n+1}\Psi_{U_n,N_n+1,N_n+2})
(\Psi_{U_n,N_n+1,N_n+2}\Psi_{U_n,N_n+2,N_n+3})
\dots(\Psi_{U_n,M_n-1,M_n}\Psi_{U_n,M_n,M_n+1}).
$$
Since $\tau_{U_n}$ is well defined, it follows from Lemma \ref{psi-gen1} 
that $\Psi_{U_n,N,N+2}$ is well defined for any $N\in\bZ$.  By Lemma 
\ref{psi-gen3}, $\Psi_{U_n,N,N+2}=\Psi_{U_n,N,N+1}\Psi_{U_n,N+1,N+2}$ for 
all $N\in\bZ$.  Therefore
$$
\de_{[.w_{n+1}l_n]}=\Psi_{U_n,N_n,N_n+2}\Psi_{U_n,N_n+1,N_n+3}\dots
\Psi_{U_n,M_n-2,M_n}.
$$
By Lemma \ref{psi-gen1}, $\Psi_{U_n,N,N+2}=T^N\tau_{U_n}T^{-N}$ for all 
$N\in\bZ$.  It follows by induction that
$$
\Psi_{U_n,N,N+2}\Psi_{U_n,N+1,N+3}\dots\Psi_{U_n,N+K-1,N+K+1}
=T^N(\tau_{U_n}T)^K\, T^{-N-K}
$$
for all $N\in\bZ$ and $K\ge1$.  In the case $N=N_n$, $K=M_n-N_n-1$, we 
obtain that
$$
\de_{[.w_{n+1}l_n]}=T^{2^{n+1}}(\tau_{[.w_{n+2}l_n]}T)^{2^n-1}\,
T^{1-3\cdot2^n},
$$
which belongs to the group $H_{n+2}$.
\end{proof}

\begin{lemma}\label{psi-tau-all}
$H_n=H_4$ for all $n>4$.
\end{lemma}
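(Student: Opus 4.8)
The plan is to prove that $H_{n+1}=H_n$ for every $n\ge4$. Since $H_n\subseteq H_{n+1}$ (to be shown below), this chain of equalities gives $H_n=H_4$ for all $n>4$.

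I would first dispose of the inclusion $H_n\subseteq H_{n+1}$, which actually holds for every $n\ge3$. The generators $\tau_{[.w_nb]},\tau_{[.w_nc]},\tau_{[.w_nd]}$ of $H_n$ are, in some order, $\tau_{[.w_nl_{n-1}]},\tau_{[.w_nl_n]},\tau_{[.w_nl_{n+1}]}$, because $l_{n-1},l_n,l_{n+1}$ have three distinct residues mod $3$, so $\{l_{n-1},l_n,l_{n+1}\}=\{b,c,d\}$. Each of these is carried into $H_{n+1}$ by the same manipulation used for the maps $\de_U$ in the remark following Lemma \ref{psi-de-3}: Lemma \ref{cyl-st1} gives $\tau_{[.w_nl_n]}=\tau_{[.w_{n+1}b]}\tau_{[.w_{n+1}c]}\tau_{[.w_{n+1}d]}$; Lemma \ref{cyl-st2} gives $\tau_{[.w_nl_{n+1}]}=T^{2^n}\tau_{[.w_{n+1}l_{n+1}]}T^{-2^n}$; and Lemma \ref{cyl-st3} applied with $n-1$ in place of $n$ (note $l_{n-1}=l_{n+2}$) writes $\tau_{[.w_nl_{n-1}]}$ as a product of two $T$-conjugates of the generator $\tau_{[.w_{n+1}l_{n-1}]}$. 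In each case Lemmas \ref{psi-gen1} and \ref{psi-gen2} handle the bookkeeping.

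The reverse inclusion $H_{n+1}\subseteq H_n$ is the substantial part. Its three generators are $\tau_{[.w_{n+1}l_n]},\tau_{[.w_{n+1}l_{n+1}]},\tau_{[.w_{n+1}l_{n+2}]}$. Two of them come cheaply: $\tau_{[.w_{n+1}l_{n+1}]}=T^{-2^n}\tau_{[.w_nl_{n+1}]}T^{2^n}\in H_n$ by Lemma \ref{cyl-st2}, and, once $\tau_{[.w_{n+1}l_n]}\in H_n$ is known, Lemma \ref{cyl-st1} with Lemma \ref{psi-gen2} gives $\tau_{[.w_{n+1}l_{n+2}]}=\tau_{[.w_{n+1}l_{n+1}]}^{-1}\tau_{[.w_{n+1}l_n]}^{-1}\tau_{[.w_nl_n]}$, a product of elements of $H_n$ (here $\tau_{[.w_nl_n]}$ is a generator of $H_n$). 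Thus everything reduces to placing the single element $\tau_{[.w_{n+1}l_n]}$ in $H_n$, and for this I would use Lemma \ref{psi-gen4}. By Lemma \ref{seq-st1} every occurrence of $w_nl_n$ in $\xi$ is immediately followed by $w_n$, so $[.w_nl_n]=[.w_{n+1}]$; put $U=[.w_nl_n]$ and $V=T^{2^n}(U)$. Comparing the defining conditions of the two cylinders, membership in $U\cap V$ pins coordinates $1,\dots,2^{n+1}$ to $w_nl_nw_nl_n=w_{n+1}l_n$, so $U\cap V=[.w_{n+1}l_n]$. Applying Lemma \ref{psi-gen4} with $K=1$, $M=-1$, $N=3$ then gives
$$
\tau_{[.w_{n+1}l_n]}=\Psi_{U\cap V,0,2}=\Psi_{V,1,3}\,\Psi_{U,-1,1}^{-1}\,\Psi_{V,1,3}^{-1}\,\Psi_{U,-1,1},
$$
where, by Lemma \ref{psi-gen1}, $\Psi_{U,-1,1}=T^{-1}\tau_{[.w_nl_n]}T$ and $\Psi_{V,1,3}=T^{2^n+1}\tau_{[.w_nl_n]}T^{-2^n-1}$; both lie in $H_n$ because $\tau_{[.w_nl_n]}$ is a generator of $H_n$, and both are well defined by Lemma \ref{cyl1}.

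The hard point is verifying the hypothesis of Lemma \ref{psi-gen4}, that $T^i(V)\cap U=T^{2^n+i}([.w_{n+1}])\cap[.w_{n+1}]=\emptyset$ for $1\le i\le N-M=4$. A point of such an intersection would satisfy $\om_j=\xi_j$ for $1\le j\le 2^{n+1}-1$ and $\om_{2^n+i+j}=\xi_j$ for the same range of $j$, whence $\xi_{j+2^n+i}=\xi_j$ for $1\le j\le 2^n-1-i$; comparing $2$-adic valuations via Lemma \ref{seq2} produces a contradiction (e.g. $\xi_2=c$ while $\xi_{2^n+3}=\xi_{2^n+5}=a$ and $\xi_{2^n+4}=b$, and, for $i=4$, $\xi_4=b$ while $\xi_{2^n+8}=d$). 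This is exactly where $n\ge4$ enters: for $i=4$ one needs $j=4$ to lie in the range $1\le j\le 2^n-5$, which holds for $n=4$ but not for $n=3$, consistent with the lemma being stated only for $n>4$ (and with $H_4=H_3$ not being obtainable this way). Combining the two inclusions yields $H_{n+1}=H_n$ for all $n\ge4$, hence $H_n=H_4$ for every $n>4$.
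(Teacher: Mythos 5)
Your proof is correct and has the same skeleton as the paper's (two inclusions, with the downward one resting on the commutator identity of Lemma \ref{psi-gen4}), but the key step is executed differently. The paper applies Lemma \ref{psi-gen4} with $U=[w_nl_n.]$ and $V_l=[.w_nl]$, taking $\Psi_{V_l,0,2}$ and $\Psi_{U,-2,0}$; since both sets sit inside $[.w_n]$, the disjointness hypothesis $T^i(V_l)\cap U=\emptyset$ for $1\le i\le 4$ comes for free from Lemma \ref{cyl1} (this is where $n\ge4$ enters there), and the identity delivers all three generators $\tau_{[.w_{n+1}l]}$, $l\in\{b,c,d\}$, in one uniform stroke. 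You instead apply Lemma \ref{psi-gen4} only once, with $V$ a shift of $U=[.w_{n+1}]$ by $T^{2^n}$, to capture $\tau_{[.w_{n+1}l_n]}$, and then recover the other two generators more cheaply via Lemma \ref{cyl-st2} and the product decomposition from Lemmas \ref{cyl-st1} and \ref{psi-gen2}. The price of your choice is that the required disjointness $T^{2^n+i}([.w_{n+1}])\cap[.w_{n+1}]=\emptyset$, $1\le i\le4$, is \emph{not} covered by Lemma \ref{cyl1} (the shift exceeds $2^n$), so you must check it by hand via Lemma \ref{seq2}; your spot checks ($j=2$ for $i=1,2,3$ and $j=4$ for $i=4$) do work and correctly isolate $n\ge4$. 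One bookkeeping slip: with the paper's convention $T([.al])=[a.l]$, your intersection is $U\cap V=[w_nl_n.w_{n+1}]=T^{2^n}([.w_{n+1}l_n])$ rather than $[.w_{n+1}l_n]$ itself, so the commutator equals $T^{2^n}\tau_{[.w_{n+1}l_n]}T^{-2^n}$; this is harmless since conjugating by $T^{-2^n}$ keeps you in $H_n$, but the displayed formula should be adjusted accordingly (or take $V=T^{-2^n}(U)$, which would also let Lemma \ref{cyl1} handle the disjointness directly).
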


\begin{proof}
Let us fix an arbitrary $n\ge4$.  First we are going to show that 
$\tau_{[.w_{n+1}l]}\in H_n$ for each $l\in\{b,c,d\}$ (so that 
$H_{n+1}\subset H_n$).  Let $U=[w_nl_n.]$ and $V_l=[.w_nl]$.  By Lemma 
\ref{seq1}, $w_n$ has length $2^n-1$ and $w_{n+1}=w_nl_nw_n$.  It follows 
that $U=T^{2^n}([.w_nl_n])$ and $U\cap V_l=[w_nl_n.w_nl] 
=T^{2^n}([.w_{n+1}l])$.  By Lemma \ref{cyl-st1}, the cylinder $[.w_nl_n]$ 
is the union of $[.w_{n+1}b]$, $[.w_{n+1}c]$, and $[.w_{n+1}d]$.  Therefore 
$U$ is the union of $T^{2^n}([.w_{n+1}b])=[w_nl_n.w_nb]$, 
$T^{2^n}([.w_{n+1}c])=[w_nl_n.w_nc]$, and 
$T^{2^n}([.w_{n+1}d])=[w_nl_n.w_nd]$.  As a consequence, the cylinder $U$ 
is contained in $[.w_n]$.  Clearly, $V_l\subset[.w_n]$ as well.

By Lemma \ref{cyl1}, the cylinder $[.w_n]$ is disjoint from $T^N([.w_n])$ 
for $1\le N<2^{n-1}$.  In particular, it is disjoint from $T([.w_n])$, 
$T^2([.w_n])$, $T^3([.w_n])$, and $T^4([.w_n])$.  Since $U$ and $V_l$ are 
subsets of $[.w_n]$, the cylinder $U$ is disjoint from $T(V_l)$, 
$T^2(V_l)$, $T^3(V_l)$, and $T^4(V_l)$.  Then it follows from Lemma 
\ref{psi-gen4} that
$$
\Psi_{V_l,0,2}\Psi^{-1}_{U,-2,0}\Psi^{-1}_{V_l,0,2}\Psi_{U,-2,0}
=\Psi_{U\cap V_l,-1,1}.
$$
By Lemma \ref{psi-gen1}, $\Psi_{U\cap V_l,-1,1}=T^{2^n-1}\tau_{[.w_{n+1}l]} 
T^{1-2^n}$ and $\Psi_{U,-2,0}=T^{2^n-2}\tau_{[.w_nl_n]}T^{2-2^n}$.  
Besides, $\Psi_{V_l,0,2}=\tau_{[.w_nl]}$.  Hence
\begin{eqnarray*}
\tau_{[.w_{n+1}l]}&=& T^{1-2^n}\Psi_{U\cap V_l,-1,1}\,T^{2^n-1}
=T^{1-2^n}\Psi_{V_l,0,2}\Psi^{-1}_{U,-2,0}\Psi^{-1}_{V_l,0,2}\Psi_{U,-2,0} 
\,T^{2^n-1}\\
&=& T^{1-2^n} \tau_{[.w_nl]} (T^{2^n-2}\tau_{[.w_nl_n]}T^{2-2^n})^{-1} 
\tau^{-1}_{[.w_nl]} (T^{2^n-2}\tau_{[.w_nl_n]}T^{2-2^n}) T^{2^n-1}\\
&=& T^{1-2^n} \tau_{[.w_nl]} T^{2^n-2}\tau^{-1}_{[.w_nl_n]}T^{2-2^n} 
\tau^{-1}_{[.w_nl]} T^{2^n-2}\tau_{[.w_nl_n]}T,
\end{eqnarray*}
which is in the group $H_n$.

Next we are going to show that $\tau_{[.w_{n-1}l]}\in H_n$ for each 
$l\in\{b,c,d\}$ (so that $H_{n-1}\subset H_n$).  Note that exactly one of
the letters $l_{n-2}$, $l_{n-1}$, and $l_n$ coincides with $l$.  In view of
Lemmas \ref{cyl-st1}, \ref{cyl-st2}, and \ref{cyl-st3}, the cylinder
$[.w_{n-1}l]$ is a disjoint union of one (if $l=l_n$), two (if
$l=l_{n-2}$), or three (if $l=l_{n-1}$) sets of the form $T^N([.w_nl'])$,
where $l'\in\{b,c,d\}$ and $N\in\bZ$.  By Lemma \ref{psi-gen1},
$\tau_{T^N([.w_nl'])}=T^N\tau_{[.w_nl']}T^{-N}$, which belongs to $H_n$.
Then it follows from Lemma \ref{psi-gen2} that $\tau_{[.w_{n-1}l]}$ is a
product of at most three elements of the group $H_n$.  Hence
$\tau_{[.w_{n-1}l]}\in H_n$ as well.

We have shown that $H_{n+1}\subset H_n$ and $H_{n-1}\subset H_n$ for all 
$n\ge4$.  As a consequence, $H_{n+1}=H_n$ for $n\ge4$.  It follows by 
induction that $H_n=H_4$ for $n\ge4$.
\end{proof}

\begin{lemma}\label{psi-de-all}
$G_n=G_3$ for all $n>3$.
\end{lemma}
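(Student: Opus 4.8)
The plan is to show $G_{n+1}=G_n$ for every $n\ge3$; since $G_3$ is the base level, induction on $n$ then gives $G_n=G_3$ for all $n>3$. The two inclusions require different arguments, and $G_n\subseteq G_{n+1}$ is the routine one. Using $w_{n+1}=w_nl_nw_n$ (Lemma~\ref{seq1}), the splitting Lemmas~\ref{cyl-st1}, \ref{cyl-st2}, \ref{cyl-st3}, and Lemmas~\ref{psi-gen1}--\ref{psi-gen2}, each generator $\de_{[.w_nl]}$ of $G_n$, as $l$ ranges over $\{b,c,d\}=\{l_{n-1},l_n,l_{n+1}\}$, can be written in terms of $T$ and the $\de_{[.w_{n+1}l']}$: namely $\de_{[.w_nl_n]}=\de_{[.w_{n+1}b]}\de_{[.w_{n+1}c]}\de_{[.w_{n+1}d]}$ (Lemma~\ref{cyl-st1}), $\de_{[.w_nl_{n+1}]}=T^{2^n}\de_{[.w_{n+1}l_{n+1}]}T^{-2^n}$ (Lemma~\ref{cyl-st2}), and $\de_{[.w_nl_{n-1}]}=(T^{2^n}\de_{[.w_{n+1}l_{n-1}]}T^{-2^n})(T^{3\cdot2^{n-1}}\de_{[.w_{n+1}l_{n-1}]}T^{-3\cdot2^{n-1}})$ (Lemma~\ref{cyl-st3}). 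Hence $G_3\subseteq G_4\subseteq G_5\subseteq\cdots$.

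For $G_{n+1}\subseteq G_n$ we must show $\de_{[.w_{n+1}l]}\in G_n$ for $l\in\{b,c,d\}$. Reading the second identity backwards gives $\de_{[.w_{n+1}l_{n+1}]}\in G_n$, and then the first identity (its three factors commute by Lemma~\ref{psi-gen2}) gives $\de_{[.w_{n+1}l_{n-1}]}\de_{[.w_{n+1}l_n]}\in G_n$. So everything reduces to showing $\de_{[.w_{n+1}l_n]}\in G_n$. By Lemma~\ref{psi-de-tau} this element lies in $H_{n+2}$, and by Lemma~\ref{psi-tau-all}, $H_{n+2}=H_4=H_{n+1}$ for $n\ge3$. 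Thus the whole theorem reduces to the key auxiliary claim that $H_{n+1}\subseteq G_n$ for every $n\ge3$, equivalently $\tau_{[.w_{n+1}l]}\in G_n$ for $l\in\{b,c,d\}$.

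For the claim, the cases $l=l_{n+1}$ and $l=l_{n-1}$ go just as in the routine direction, applying Lemmas~\ref{psi-gen1}--\ref{psi-gen2} to the $\Psi_{\cdot,0,2}$-analogues of Lemmas~\ref{cyl-st1}--\ref{cyl-st2} and using $\tau_{[.w_nl_n]}\in G_n$. The essential case is $l=l_n$. From $w_{n+1}l_n=w_nl_nw_nl_n$ and $|w_nl_n|=2^n$ one checks $T^{2^n}([.w_{n+1}l_n])=[.w_nl_n]\cap T^{2^n}([.w_nl_n])$. I would then apply Lemma~\ref{psi-gen4} with $U=[.w_nl_n]$, $V=T^{2^n}([.w_nl_n])$ and $M=0$, $K=1$, $N=2$, which rewrites $\tau_{T^{2^n}([.w_{n+1}l_n])}=\Psi_{U\cap V,0,2}$ as the commutator $\Psi_{V,1,2}\Psi_{U,0,1}^{-1}\Psi_{V,1,2}^{-1}\Psi_{U,0,1}$; here $\Psi_{U,0,1}=\de_{[.w_nl_n]}$ is a defining generator of $G_n$ and $\Psi_{V,1,2}=T^{2^n+1}\de_{[.w_nl_n]}T^{-2^n-1}\in G_n$ by Lemma~\ref{psi-gen1}, so the commutator, hence $\tau_{T^{2^n}([.w_{n+1}l_n])}$, hence $\tau_{[.w_{n+1}l_n]}$ (conjugate by $T^{-2^n}$), lies in $G_n$.

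The one nontrivial point is the hypothesis of Lemma~\ref{psi-gen4}: $T^i(V)\cap U=\emptyset$ for $i=1,2$, i.e.\ $[.w_nl_n]\cap T^{2^n+i}([.w_nl_n])=\emptyset$. This combinatorial verification is what I expect to be the main obstacle; it relies on Lemma~\ref{seq-st1}. A point in the intersection would carry an occurrence of $w_nl_n$ at positions $1,\dots,2^n$; not being at the start of the sequence, this occurrence is immediately preceded by $w_nb$, $w_nc$, or $w_nd$ (Lemma~\ref{seq-st1}), which forces the letter at position $2-2^n$ to be the second letter $c$ of $w_n$. But the other occurrence of $w_nl_n$, shifted left by $2^n+i$, forces that same position to be the third letter of $w_n$ (namely $a$) when $i=1$, and the fourth letter of $w_nl_n$ (namely $b$) when $i=2$, since $w_n$ begins with $acab$ for $n\ge3$ --- a contradiction either way. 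Assembling all of this: $G_n\subseteq G_{n+1}$ for every $n\ge2$, and for $n\ge3$ the memberships $\de_{[.w_{n+1}l_{n+1}]}\in G_n$, $\de_{[.w_{n+1}l_n]}\in H_{n+2}=H_4=H_{n+1}\subseteq G_n$, and $\de_{[.w_{n+1}l_{n-1}]}\in G_n$ (the last from the first two together with the first identity of the routine direction) give $G_{n+1}\subseteq G_n$; hence $G_{n+1}=G_n$ for all $n\ge3$, and $G_n=G_3$ for every $n>3$.
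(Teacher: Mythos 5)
The overall architecture is sound and close to the paper's: the routine inclusions $G_n\subseteq G_{n+1}$, the reduction of $G_{n+1}\subseteq G_n$ to the single membership $\de_{[.w_{n+1}l_n]}\in G_n$, and the route through Lemmas \ref{psi-de-tau} and \ref{psi-tau-all} all match the paper, and your combinatorial verification that $[.w_nl_n]\cap T^{2^n+i}([.w_nl_n])=\emptyset$ for $i=1,2$ via Lemma \ref{seq-st1} is correct. The genuine gap is in the key auxiliary claim $H_{n+1}\subseteq G_n$ (equivalently, $H_4\subseteq G_3$). Your commutator construction produces only one of the three generators of $H_{n+1}$, namely $\tau_{[.w_{n+1}l_n]}$, because the choice $V=T^{2^n}([.w_nl_n])$ forces the second half of $U\cap V=[w_nl_n.w_nl_n]$ to end in $l_n$. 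For the other two letters you invoke ``$\tau_{[.w_nl_n]}\in G_n$'' (and, implicitly, for $l=l_{n+1}$ also $\tau_{[.w_nl_{n+1}]}\in G_n$), but neither membership is ever established: $G_n$ is generated by the order-two elements $\de_{[.w_nl]}$ together with $T$, and the only order-three element your argument yields at level $n$ is $\tau_{[.w_nl_{n-1}]}$ (the essential case one level down), whose terminal letter is $l_{n-1}\ne l_n$. A downward induction cannot supply the missing base either: below level $3$ the needed elements $\tau_{[.ac]}$ and $\tau_{[.acac]}$ are not even defined, as the paper notes before Lemma \ref{psi-de-tau}. So two of the three generators of $H_{n+1}$ are never shown to lie in $G_n$, and the chain $\de_{[.w_{n+1}l_n]}\in H_{n+2}=H_4=H_{n+1}\subseteq G_n$ breaks at its last link.

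The repair is a small change in the sets fed into Lemma \ref{psi-gen4}, and it is exactly what the paper does: take $U=[w_3l_3.]=T^8([.w_3d])$ with the interval $[-1,0]$ and $V_l=[.w_3l]$ with the interval $[0,1]$, so that $U\cap V_l=[w_3l_3.w_3l]=T^8([.w_4l])$ and the identity reads $\Psi_{V_l,0,1}\Psi_{U,-1,0}^{-1}\Psi_{V_l,0,1}^{-1}\Psi_{U,-1,0}=\Psi_{U\cap V_l,-1,1}$. Here $\Psi_{U,-1,0}=T^7\de_{[.w_3d]}T^{-7}$ and $\Psi_{V_l,0,1}=\de_{[.w_3l]}$ are conjugates of the defining generators of $G_3$, the set $U$ does not depend on $l$, and the construction handles all three letters at once, giving $H_4\subseteq G_3$ in one stroke; the disjointness hypothesis follows from $U,V_l\subseteq[.w_3]$ and Lemma \ref{cyl1}. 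With that replacement (and then $H_{n+1}=H_4\subseteq G_3\subseteq G_n$) your argument closes up and the remaining steps coincide with the paper's.
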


\begin{proof}
First we are going to show that $\tau_{[.w_4l]}\in G_3$ for each
$l\in\{b,c,d\}$ (so that the group $H_4$ is a subgroup of $G_3$).  Let
$U=[acabacad.]$ and $V_l=[.acabacal]$.  Since $w_3=acabaca$ and $l_3=d$, we
have $U=T^8([.w_3l_3])$, $V_l=[.w_3l]$, and $U\cap V_l=[w_3l_3.w_3l]=
T^8([.w_3l_3w_3l])=T^8([.w_4l])$.  By Lemma \ref{seq-st1}, any occurrence
of $w_3l_3$ in $\xi$ is immediately followed by $w_3b$, $w_3c$, or $w_3d$. 
As a consequence, $U=[w_3l_3.]$ is contained in $[.w_3]$.  Clearly,
$V_l\subset[.w_3]$ as well.  Observe that the cylinder $[.w_3]=[.acabaca]$
is disjoint from $T([.w_3])=[a.cabaca]$ and $T^2([.w_3])=[ac.abaca]$.
Since $U$ and $V_l$ are subsets of $[.w_3]$, the cylinder $U$ is disjoint
from $T(V_l)$ and $T^2(V_l)$.  Then it follows from Lemma \ref{psi-gen4}
that
$$
\Psi_{V_l,0,1}\Psi^{-1}_{U,-1,0}\Psi^{-1}_{V_l,0,1}\Psi_{U,-1,0}
=\Psi_{U\cap V_l,-1,1}.
$$
By Lemma \ref{psi-gen1}, $\Psi_{U\cap V_l,-1,1}=T^7\tau_{[.w_4l]}T^{-7}$
and $\Psi_{U,-1,0}=T^7\de_{[.w_3l_3]}T^{-7}$.  Besides, $\Psi_{V_l,0,1}=
\de_{[.w_3l]}$.  Hence
\begin{eqnarray*}
\tau_{[.w_4l]}&=& T^{-7}\Psi_{U\cap V_l,-1,1}\,T^7
=T^{-7}\Psi_{V_l,0,1}\Psi^{-1}_{U,-1,0}\Psi^{-1}_{V_l,0,1}\Psi_{U,-1,0}\,
T^7\\
&=& T^{-7} \de_{[.w_3l]} (T^7\de_{[.w_3l_3]}T^{-7})^{-1} 
\de^{-1}_{[.w_3l]} (T^7\de_{[.w_3l_3]}T^{-7}) T^7\\
&=& T^{-7} \de_{[.w_3l]} T^7\de_{[.w_3l_3]}T^{-7} 
\de_{[.w_3l]} T^7\de_{[.w_3l_3]},
\end{eqnarray*}
which is in the group $G_3$.

Next we derive three formulas.  By Lemma \ref{cyl-st1}, $[.w_nl_n]=
[.w_{n+1}b]\sqcup[.w_{n+1}c]\sqcup[.w_{n+1}d]$ for all $n\ge1$.  Then Lemma
\ref{psi-gen2} implies that $\de_{[.w_nl_n]}=\de_{[.w_{n+1}b]}
\de_{[.w_{n+1}c]}\de_{[.w_{n+1}d]}$ for $n\ge1$.  By Lemma \ref{cyl-st2},
$[.w_nl_{n+1}]=T^{2^n}([.w_{n+1}l_{n+1}])$ for all $n\ge1$.  Then Lemma
\ref{psi-gen1} implies that $\de_{[.w_nl_{n+1}]}=
T^{2^n}\de_{[.w_{n+1}l_{n+1}]}T^{-2^n}$ for $n\ge1$.  By Lemma
\ref{cyl-st3}, $[.w_nl_{n-1}]=T^{2^n}([.w_{n+1}l_{n-1}])\sqcup
T^{3\cdot2^{n-1}}([.w_{n+1}l_{n-1}])$ for all $n\ge2$.  Then Lemmas
\ref{psi-gen1} and \ref{psi-gen2} imply that
\begin{eqnarray*}
\de_{[.w_nl_{n-1}]}&=&(T^{2^n}\de_{[.w_{n+1}l_{n-1}]}T^{-2^n})
(T^{3\cdot2^{n-1}}\de_{[.w_{n+1}l_{n-1}]}T^{-3\cdot2^{n-1}})\\
&=&T^{2^n}\de_{[.w_{n+1}l_{n-1}]}T^{2^{n-1}}\de_{[.w_{n+1}l_{n-1}]}
T^{-3\cdot2^{n-1}}
\end{eqnarray*}
for $n\ge2$.  Note that for any $n\ge2$ the triple $l_{n-1},l_n,l_{n+1}$ is
a permutation of the triple $b,c,d$.  Therefore the above three formulas
imply that transformations $\de_{[.w_nb]}$, $\de_{[.w_nc]}$, and
$\de_{[.w_nd]}$ belong to the group $G_{n+1}$.  Hence $G_n\subset G_{n+1}$
for all $n\ge2$.

Next we are going to show that $G_{n+1}\subset G_n$ for all $n\ge3$.  By
the above, $H_4\subset G_3$.  In view of Lemmas \ref{psi-de-tau} and
\ref{psi-tau-all}, the group $G_3$ contains $\de_{[.w_{n+1}l_n]}$ for all
$n\ge2$.  Since $G_n\subset G_{n+1}$ for $n\ge2$, it follows by induction
that $G_3\subset G_n$ for all $n\ge3$.  As a consequence,
$\de_{[.w_{n+1}l_n]}\in G_n$ for $n\ge3$.  Besides, for any $n\ge1$ we have
$\de_{[.w_{n+1}l_{n+1}]}=T^{-2^n}\de_{[.w_nl_{n+1}]}T^{2^n}$, which belongs
to $G_n$.  Therefore for any $n\ge3$ the group $G_n$ contains two of the
three transformations $\de_{[.w_{n+1}b]}$, $\de_{[.w_{n+1}c]}$, and
$\de_{[.w_{n+1}d]}$.  Since the product of all three is $\de_{[.w_{n+1}b]}
\de_{[.w_{n+1}c]}\de_{[.w_{n+1}d]}=\de_{[.w_nl_n]}\in G_n$, the remaining
one of the three is in $G_n$ as well.  Hence $G_n$ contains all generators
of the group $G_{n+1}$ so that $G_{n+1}\subset G_n$.

We have shown that $G_n\subset G_{n+1}$ for $n\ge2$ and $G_{n+1}\subset
G_n$ for $n\ge3$.  As a consequence, $G_{n+1}=G_n$ for $n\ge3$.  It follows
by induction that $G_n=G_3$ for all $n\ge3$.
\end{proof}

\begin{proofof}{Theorem \ref{main1}}
According to Theorem \ref{full3}, the topological full group $[[T]]$ is 
generated by $T$ and all transformations of the form $\de_U$, where 
$U\subset\Om$ is a clopen set.  By Lemma \ref{psi-de}, each $\de_U$ is 
contained in the group $G_n$ (generated by $\de_{[.w_nb]}$, 
$\de_{[.w_nc]}$, $\de_{[.w_nd]}$, and $T$) for $n$ large enough.  Then it 
follows from Lemma \ref{psi-de-all} that each $\de_U$ is contained in the 
group $G_3$.  We conclude that $G_3=[[T]]$.  By Lemma \ref{psi-de-3}, the 
group $G_3$ coincides with the group generated by $T$, $\de_{[.b]}$, 
$\de_{[.d]}$, and $\de_{[.acacac]}$.
\end{proofof}

\bigskip

{\sc
\begin{raggedright}
Department of Mathematics\\
Texas A\&M University\\
College Station, TX 77843--3368
\end{raggedright}
}

\end{document}